\newcommand{\smallfrac}[2]{{\textstyle\frac{#1}{#2}}} 
\newcommand{\jump}[1]{[\![#1]\!]}
\newtheorem{proposition}{Proposition}[section]
\newtheorem{theorem}[proposition]{Theorem}
\numberwithin{equation}{section}
\title{The Costabel-Stephan system of Boundary Integral Equations in the Time Domain}
\date{\today}
\author{Tianyu Qiu \& Francisco--Javier Sayas\footnote{Department of Mathematical Sciences, University of Delaware, Newark DE 19716.  {\tt \{qty,fjsayas\}@udel.edu}. Partially funded by NSF (grant DMS 1216356)}}
\begin{document}

\maketitle

\begin{abstract}
In this paper we formulate a transmission problem for the transient acoustic wave equation as a system of retarded boundary integral equations. We then analyse a fully discrete method using a general Galerkin semidiscretization-in-space and Convolution Quadrature in time. All proofs are developed using recent techniques based on the theory of evolution equations. Some numerical experiments are provided.\\
{\bf AMS Subject classification.} 65N30, 65N38, 65N12, 65N15
\end{abstract}

\section{Introduction}

In this paper we study a boundary integral formulation for the problem of the scattering of a transient acoustic wave by a non-smooth obstacle in two or three space dimensions. The boundary integral formulation is the transient equivalent to Costabel and Stephan's \cite{CoSt:1985} system of integral equations for Helmholtz Transmission Problems. It is a direct formulation (with physical fields as unknowns) involving the four operators of the associated Calder\'on calculus (the entries of the Calder\'on projector) at two different wave speeds. In addition to studying the well-posedness of the problem, we give estimates for Galerkin semidiscretization-in-space, for any choice of the discrete spaces, and analyze the fully discrete method obtained by applying second order Convolution Quadrature to the associated system of semidiscrete delayed integral equations. 

The paper will be written for a single obstacle. The case of multiple separate scatterers (with no common boundaries), each of them possibly with different material properties, is a straightforward extension of this work. The case of layered obstacles might raise new issues from the point of view of analysis, and is the object of future research. The more involved case of obstacles with piecewise constant material properties not organized into layers is due to be considerably more complicated, requiring the use of multitrace spaces following \cite{HiJe:2011, ClHi:2013}. Finally, the choice of modeling the interior of the scatterer using a variational formulation leads to BEM-FEM coupling strategies: \cite{AbJoRoTe:2011, FaMo:2014, BaLuSa:2014}. 

The analysis of the method follows recent techniques \cite{SaySUB, DomSay13, BanLalSaySUB} entirely developed in the time domain, instead of the Laplace domain analysis that stems from the original work of Bamberger and Ha-Duong \cite{BaHa:1986a, BaHa:1986b}. To the best of our knowledge, it is the first time that these techniques are used on a setting that does not involve a purely exterior boundary value problem.We note that part of the Laplace domain analysis for transmission problems using the Costabel-Stephan formulation was included in \cite{LalSay09}, and was extended recently to the case of electromagnetic waves in \cite{ChMo:2014}. As shown in \cite{BanLalSaySUB}, direct time domain analysis provides bounds (for the operators and their discretization) that are sharper than the ones obtained by Laplace domain techniques. The analysis in this paper (like the one in \cite{SaySUB,BanLalSaySUB}) is essentially independent of the spaces used for space discretization. Moreover, the results for semidiscretization are also valid for full Galerkin discretization. For time-stepping we have used a simple BDF2-based Convolution Quadrature method: the analysis for the trapezoidal rule discretization follows very similar arguments and can be easily adapted from what appears in Section 6 of this work and \cite[Section 6]{BanLalSaySUB}. Higher order much less dispersive discretizations are easily attainable using RK-based CQ methods \cite{Banjai:2010, BaMeSc:2012, BaSc:2012, HaSa:2014}. In that case the analysis relies entirely on Laplace domain estimates and the behavior of the bounds with respect to the time variable is still unclear.

A final point deserving clarification is the choice of the Costabel-Stephan format for the formulation of the transmission problem as a system of integral equations. This is a direct formulation using the interior Cauchy data as unknowns and leads to a system that can be considered of equations of the first kind. If we revert to the realm of frequency domain problems, there is a wealth of integral formulations with two goals in mind \cite{BoDoLeTu:2013, GrHeLa:2013, KlMa:1988, KrRo:1978, ToWe:1993, Petersdorff:1989, Zinn:1989}: robustness with respect to frequency (so that there are no resonances in the formulation) and good conditioning of the final system. Both goals can be achieved for smooth scatterers using regularized combined integral formulations, but the analysis of many of these methods is not applicable for non-smooth obstacles. In time domain problems, and with general Lipschitz scatterers, there are additional issues. An important one of them is the difficulty of showing that combined field formulations lead to well-posed problems that stay well-posed after Galerkin discretization. An apparently lone step in this direction appears in \cite{CheMonSB}, by forcing a strong time-regularization in the combined field that worsens the mapping properties of the operator in the time domain.

The paper is structured as follows. In Section \ref{sec:2}, we present the transmission problem and its associated boundary integral formulation. In Section \ref{sec:2B}, we introduce the general Galerkin semidiscretization-in-space and state the two main theorems associated to this part of the discretization: error for the semidiscretization-in-space and a stability theorem that is needed for the study of time discretization. The proofs of these results appear in Sections \ref{sec:4} and \ref{sec:5}. Section \ref{sec:6} deals with full discretization of the problem and Section \ref{sec:7} has some numerical experiments in two dimensions.

\paragraph{Background.} Elementary results on Sobolev spaces (traces, weak normal derivatives, etc) is assumed throughout. Some basic background results in operator-valued distributions are used in the presentation of the equations (Section \ref{sec:2}) and in the quite technical reconciliation of the strong and weak solutions (Section \ref{sec:5}). These aspects can be easily learned from handbooks in advanced tools of applied analysis or, in a simplified version, in \cite{SaySUB}. All needed results from the theory of evolution equations are stated in clear terms before they are used.

\section{Prolegomena and definitions}\label{sec:2}

\paragraph{Geometric setting and problem.}
This paper is concerned with a transmission problem for the wave equation in free space (in dimensions $d=2$ or $d=3$), where a bounded obstacle with homogeneous isotropic material properties is surrounded by a medium with different homogeneous and isotropic properties. In order to set up the problem as soon as possible, let us describe the simplified model we will analyse. Let $\Omega_-\subset \mathbb R^d$ be a bounded Lipschitz domain with boundary $\Gamma$. The exterior domain will be denoted $\Omega_+:=\mathbb R^d\setminus\overline\Omega_-$ and $\boldsymbol\nu$ will be the unit normal vector field on $\Gamma$, pointing from $\Omega_-$ to $\Omega_+$. Two positive parameters $\kappa$ and $c$ will be used to describe the material properties in $\Omega_-$. For sufficiently smooth functions defined in $\mathbb R^d\setminus\Gamma=\Omega_-\cup\Omega_+$, the restrictions to the boundary will be denoted $\gamma^\pm u$. Similarly, $\partial_\nu^\pm u$ will denote the normal derivatives from both sides of $\Gamma$. We will specify their weak definitions later on. We look for the solution to the following transient scattering problem:
\begin{subequations}\label{eq:2.1}
\begin{alignat}{6}
\label{eq:2.1a}
c^{-2} \ddot u(t) = \kappa\Delta u(t) & & \mbox{in $\Omega_-$}, & \quad &\forall t\ge 0,\\
\label{eq:2.1b}
\ddot v(t) =\Delta v(t) & &\mbox{in $\Omega_+$}, & & \forall t\ge 0,\\
\label{eq:2.1c}
\gamma^- u(t)=\gamma^+ v(t)+\beta_0(t) & & \mbox{on $\Gamma$}, & & \forall t\ge 0,\\
\label{eq:2.1d}
\kappa \partial_\nu^- u(t)=\partial_\nu^+ v(t)+\beta_1(t)  & & \mbox{on $\Gamma$}, & & \forall t\ge 0,\\
\label{eq:2.1e}
u(0)=\dot u(0)=0 &\qquad & \mbox{in $\Omega_-$},\\
\label{eq:2.1f}
v(0)=\dot v(0)=0 &\qquad & \mbox{in $\Omega_+$}.
\end{alignat}
\end{subequations}
Let us first clarify some notational aspects of \eqref{eq:2.1}. We need two Sobolev spaces
\[
H^1_\Delta (\Omega_\pm):=\{ w \in H^1(\Omega_\pm)\,:\,\Delta w\in L^2(\Omega_\pm)\},
\]
endowed with their natural norms. We can then define the interior and exterior traces $\gamma^\pm:H^1(\Omega_\pm)\to H^{1/2}(\Gamma)$ and normal derivatives $\partial_\nu^\pm :H^1_\Delta(\Omega_\pm)\to H^{-1/2}(\Gamma)$. We look for sufficiently smooth functions
$u:[0,\infty)\to H^1_\Delta(\Omega_+)$ and $v:[0,\infty)\to H^1_\Delta(\Omega_-)$ satisfying: the differential equations \eqref{eq:2.1a}-\eqref{eq:2.1b} in a classical sense in the time variable and with values in $L^2(\Omega_\pm)$ in the space variables; the transmission condition \eqref{eq:2.1c} in $H^{1/2}(\Gamma)$; the condition \eqref{eq:2.1d} in $H^{-1/2}(\Gamma)$; and finally the initial conditions \eqref{eq:2.1e}-\eqref{eq:2.1f}. The discontinuities across $\Gamma$ are given by functions $\beta_0:[0,\infty)\to H^{1/2}(\Gamma)$ and $\beta_1:[0,\infty)\to H^{-1/2}(\Gamma)$. In principle, these functions are not related to each other, although in a physical setting there is an incident wave $u_{\mathrm{inc}}$ satisfying $\ddot u_{\mathrm{inc}}(t)=\Delta u_{\mathrm{inc}}(t)$ in free space for all $t$, and we take
$\beta_0(t):=\gamma u_{\mathrm{inc}}(t)$ and $\beta_1(t):=\partial_\nu u_{\mathrm{inc}}(t)$.

\paragraph{Strong and weak causality.} Given a function $f:\mathbb R \to X$, where $X$ is any function or operator space, we will say that $f$ is causal when $f\equiv 0$ in $(-\infty,0)$. Given a distribution $f$ in $\mathbb R$ with values in a Banach space $X$, we will say that $f$ is causal when its support is contained in $[0,\infty)$. Note that if $X$ and $Y$ are two Banach spaces,  $A:X\to Y$ is linear and bounded, and $f$ is a causal $X$-valued distribution, then $Af$ defines a causal $Y$-valued distribution. In particular if $X\subset Y$ with bounded embedding, every $X$-valued distribution can be read as a $Y$-valued distribution.
For equality of two distributions$f$ and $g$  with values in a space $X$ we will
use the notation $f=g \quad\mbox{(in $X$)}$, as a reminder of where the equality takes place.
Instead of thinking of the system \eqref{eq:2.1} with strong differentiation and strongly imposed initial conditions, we can write it in the sense of vector-valued distributions: we then look for a causal $H^1_\Delta(\Omega_-)$-valued distribution $u$ and a causal $H^1_\Delta(\Omega_+)$-valued distribution $v$ satisfying
\begin{subequations}\label{eq:2.2}
\begin{alignat}{6}
c^{-2} \ddot u =\kappa\Delta u  & & \qquad & \mbox{(in $L^2(\Omega_-))$},\\
\ddot v =\Delta v & & & \mbox{(in $L^2(\Omega_+)$)},\\
\gamma^- u=\gamma^+ v+\beta_0 & & & \mbox{(in $H^{1/2}(\Gamma)$)},\\
\kappa\partial_\nu^- u =\partial_\nu^+ v+\beta_1 & & & \mbox{(in $H^{-1/2}(\Gamma)$)}.
\end{alignat}
\end{subequations}
Now $\beta_0$ and $\beta_1$ are  causal $H^{1/2}(\Gamma)$- and $H^{-1/2}(\Gamma)$-valued distributions respectively, and differentiation is understood in the sense of distributions in $\mathbb R$. The initial conditions are now implicit in the fact that we assume the distributions to be causal and in the fact that differentiation is taken over the entire real line, and not only on $(0,\infty)$.

\paragraph{Weak form of the Huygens' Potentials.} Given a causal $H^{1/2}(\Gamma)$-valued distribution $\varphi$ and a causal $H^{-1/2}(\Gamma)$-valued distribution $\eta$, and assuming that both of them are Laplace transformable, we consider the following transmission problem: find a causal $H^1_\Delta(\mathbb R^d\setminus\Gamma)$-valued distribution $w$ such that
\begin{subequations}\label{eq:2.3}
\begin{alignat}{6}
m^{-2} \ddot w =\Delta w & &\qquad & \mbox{(in $L^2(\mathbb R^d\setminus\Gamma)$)},\\
\jump{\gamma w}:=\gamma^-w-\gamma^+ w=\varphi & & & \mbox{(in $H^{1/2}(\Gamma)$)},\\
\jump{\partial_\nu w}:=\partial_\nu^-w-\partial_\nu^+ w=\eta& & & \mbox{(in $H^{-1/2}(\Gamma)$)}.
\end{alignat}
\end{subequations}
As shown in \cite[Chapters 2 and 3]{SaySUB}, problem \eqref{eq:2.3} admits a unique solution that can be written using two convolution operators
\begin{equation}\label{eq:2.4}
w=\mathcal S_m *\eta-\mathcal D_m *\varphi.
\end{equation}
Here $\mathcal S_m$ and $\mathcal D_m$ are causal distributions with values in $\mathcal L(H^{-1/2}(\Gamma), H^1_\Delta(\mathbb R^d\setminus\Gamma))$ and $\mathcal L(H^{1/2}(\Gamma), H^1_\Delta(\mathbb R^d\setminus\Gamma))$ respectively, that can be described using their Laplace transforms. Neither the expressions of the Laplace transforms of  $\mathcal S_m$ and $\mathcal D_m$  nor the expression of the convolutional formula \eqref{eq:2.4} for smooth enough (in time) input $\varphi$ and $\eta$ is relevant for what follows. The convolution operators in \eqref{eq:2.3} are called the single and double layer (retarded) potentials. The Laplace transforms will be used later on for the time discretization of the problem using Lubich's Convolution Quadrature techniques. Associated to the layer potentials there are four boundary integral operators, formed by convolution with the following distributions
\begin{subequations}\label{eq:2.5}
\begin{alignat}{6}
\mathcal V_m &:=(\smallfrac12\gamma^-+\smallfrac12\gamma^+)\mathcal S_m =\gamma^\pm \mathcal S_m,  & &\qquad &
	\mbox{valued in $\mathcal L(H^{-1/2}(\Gamma),H^{1/2}(\Gamma))$},\\
\mathcal K_m &:=(\smallfrac12\gamma^-+\smallfrac12\gamma^+)\mathcal D_m, & &\qquad &
	\mbox{valued in $\mathcal L(H^{1/2}(\Gamma),H^{1/2}(\Gamma))$},\\
\mathcal J_m &:=(\smallfrac12\partial_\nu^-+\smallfrac12\partial_\nu^+)\mathcal S_m, & &\qquad &
	\mbox{valued in $\mathcal L(H^{-1/2}(\Gamma),H^{-1/2}(\Gamma))$},\\
\mathcal W_m &:=-(\smallfrac12\partial_\nu^-+\smallfrac12\partial_\nu^+)\mathcal D_m =-\partial_\nu^\pm \mathcal D_m, & &\qquad &
	\mbox{valued in $\mathcal L(H^{-1/2}(\Gamma),H^{-1/2}(\Gamma))$}.
\end{alignat}
\end{subequations}
Note that $\jump{\gamma \cdot}\mathcal S_m=0$ and $\jump{\partial_\nu \cdot}\mathcal D_m=0$, which justifies the three possible ways of defining $\mathcal V_m$ and $\mathcal W_m$.
When $m=1$ in \eqref{eq:2.3} we will omit it in the potential expressions \eqref{eq:2.4} and in the operators \eqref{eq:2.5}

\paragraph{The time domain Costabel-Stephan system.} We return to the weak transmission problem \eqref{eq:2.2}, and choose two unknowns on the boundary
\begin{equation}\label{eq:2.6}
\phi:=\gamma^-v, \qquad \lambda:=\partial_\nu^- v
\end{equation}
to represent the solution of that problem. Using uniqueness arguments tied to the definition of the layer potentials, and thinking of $u$ as extended by zero to $\Omega_+$ and $v$ to be extended by zero to $\Omega_-$, we represent
\begin{equation}\label{eq:2.7}
u=\mathcal S_{m} *\lambda -\mathcal D_{m}*\phi, \qquad v=-\mathcal S*(\kappa\lambda-\beta_1)+\mathcal D*(\phi-\beta_0), \quad \mbox{with $m:=c\sqrt\kappa$.}
\end{equation}
Then, the quantities $\lambda$ and $\phi$ satisfy the system of convolutional boundary integral equations
\begin{equation}\label{eq:2.8}
\left[\begin{array}{cc}
	\mathcal V_{m}+\kappa\mathcal V & -\mathcal K_{m}-\mathcal K \\
	\mathcal J_{m}+\mathcal J   & \mathcal W_{m}+\frac1\kappa\mathcal W
\end{array}\right] * 
\left[\begin{array}{c} \lambda \\ \phi \end{array}\right]
=
\frac12\left[\begin{array}{c} \frac1\kappa\beta_0 \\ \beta_1 \end{array}\right]
+ \left[\begin{array}{cc}
	\mathcal V &-\mathcal K \\
	\frac1\kappa\mathcal J & \frac1\kappa\mathcal W
\end{array}\right] *
\left[\begin{array}{c} \beta_1 \\ \beta_0 \end{array}\right].
\end{equation}
We make these arguments more precise in the next result. Its proof follows from \cite[Section ..]{LalSay09}.

\begin{proposition}
Let $(u,v)$ be a causal solution of \eqref{eq:2.2}. Then the distributions \eqref{eq:2.6} are a causal solution of the system of convolution equations \eqref{eq:2.8} and $(u,v)$ can be represented using the potential expressions \eqref{eq:2.7}. Reciprocally, if $(\lambda,\phi)$ is a causal solution of the system \eqref{eq:2.8} and we define the pair $(u,v)$ using \eqref{eq:2.7}, then $(u,v)$ is a causal solution of \eqref{eq:2.2} and \eqref{eq:2.6} holds.
\end{proposition}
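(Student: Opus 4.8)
The plan is to establish the two implications by marrying the representation formula \eqref{eq:2.4} for the layer potentials with the jump relations of the associated boundary operators, so that the coupled system \eqref{eq:2.8} emerges as the algebraic translation on $\Gamma$ of the transmission conditions. Throughout I would work directly with causal, Laplace-transformable vector-valued distributions, using repeatedly that a bounded linear operator (a trace, a normal derivative, or convolution by a causal operator-valued distribution) sends causal distributions to causal distributions.

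Consider first the forward implication. Starting from a causal solution $(u,v)$ of \eqref{eq:2.2}, I would rewrite the interior equation as the constant-coefficient wave equation $m^{-2}\ddot u=\Delta u$ with $m=c\sqrt\kappa$, so that $u$ and $v$, each extended by zero across $\Gamma$, solve a homogeneous transmission problem of the form \eqref{eq:2.3} with prescribed jumps. The uniqueness behind \eqref{eq:2.4} then yields Green's representation of each field in terms of its one-sided Cauchy data; feeding in the transmission conditions of \eqref{eq:2.2} to eliminate the exterior Cauchy data in favour of $(\phi,\lambda)$ and $(\beta_0,\beta_1)$ produces exactly \eqref{eq:2.7}. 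Applying the averaged traces and normal derivatives from \eqref{eq:2.5} to these representations, and using the identities $\jump{\gamma\cdot}\mathcal S_m=0$, $\jump{\partial_\nu\cdot}\mathcal D_m=0$ together with the half-jump relations for $\gamma^\pm\mathcal D_m$ and $\partial_\nu^\pm\mathcal S_m$, gives the interior and exterior Calder\'on identities. Taking the $\kappa$-weighted sum dictated by the transmission conditions (which carry the factors $\kappa$ and $1/\kappa$) makes the diagonal identity terms cancel and leaves precisely the two rows of \eqref{eq:2.8}; causality is inherited at every step.

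For the reciprocal implication I would define $(u,v)$ by \eqref{eq:2.7} and read off immediately that, being convolutions of the causal potentials $\mathcal S_m,\mathcal D_m,\mathcal S,\mathcal D$ with causal densities, $u$ and $v$ are causal and solve the homogeneous wave equations in $\mathbb R^d\setminus\Gamma$ with the correct speeds. The substance is then to show that the two scalar equations \eqref{eq:2.8} are exactly the conditions forcing the ``wrong-side'' Cauchy data to vanish, so that $(\phi,\lambda)$ are genuinely the interior Cauchy data and \eqref{eq:2.6} holds. Concretely, I would compute the one-sided traces of \eqref{eq:2.7} with the same jump relations, observe that the combinations appearing in \eqref{eq:2.8} coincide with the one-sided trace and normal derivative that should vanish, and invoke uniqueness of the causal homogeneous exterior and interior problems to conclude the location of the supports. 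Once the supports are placed, the transmission conditions fall out by taking the remaining one-sided traces and using the jump relations once more.

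The main obstacle, and the step I would treat most carefully, is this support-and-uniqueness argument in the reverse direction: translating the two boundary equations into the vanishing of one-sided Cauchy data, and rigorously invoking uniqueness for the associated causal homogeneous problem, all at the level of operator-valued distributions (checking that traces and normal derivatives may be applied under the convolutions and that Laplace transformability is preserved). An economical alternative, essentially the route of \cite{LalSay09}, is to Laplace-transform the whole equivalence: for each value of the transform parameter \eqref{eq:2.2} becomes a Helmholtz transmission problem and \eqref{eq:2.8} the corresponding steady-state Costabel--Stephan system, whose equivalence is classical; inverting the transform returns the distributional statement. I would present the direct argument but flag this shortcut, since it is what the cited reference supplies.
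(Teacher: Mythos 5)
Your proposal is, first of all, considerably more than the paper itself supplies: the paper gives no argument for this proposition and simply defers to the Laplace-domain analysis of \cite{LalSay09}, which is exactly the ``shortcut'' you flag in your last paragraph. So your direct time-domain argument is a genuinely different route, and your forward direction is sound: extend $u$ and $v$ by zero across $\Gamma$, identify their jumps with the one-sided Cauchy data through the transmission conditions of \eqref{eq:2.2}, invoke the uniqueness behind \eqref{eq:2.4} to get the representations \eqref{eq:2.7}, and then add the interior and exterior Calder\'on identities with the $\kappa$-weights dictated by the transmission conditions to arrive at \eqref{eq:2.8}. That is the standard derivation, and causality is indeed preserved at every step.

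There is, however, a gap in the reverse direction as you describe it. You assert that the two equations of \eqref{eq:2.8} ``are exactly the conditions forcing the wrong-side Cauchy data to vanish,'' and you then plan to invoke ``uniqueness of the causal homogeneous exterior and interior problems.'' Two scalar equations cannot force the four wrong-side data $\gamma^+ u$, $\partial_\nu^+ u$, $\gamma^- v$, $\partial_\nu^- v$ to vanish individually; what the jump relations actually show is that the two rows of \eqref{eq:2.8} are equivalent to the vanishing of the two \emph{differences} $\gamma^+ u-\gamma^- v$ and $\kappa\partial_\nu^+ u-\partial_\nu^- v$ (this is precisely the continuous counterpart of conditions \eqref{eq:2.11g}--\eqref{eq:2.11h} when $X_h=H^{-1/2}(\Gamma)$ and $Y_h=H^{1/2}(\Gamma)$). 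Consequently the wrong-side fields satisfy no homogeneous boundary condition separately, and decoupled interior/exterior uniqueness does not apply. The correct conclusion treats the pair $\bigl(v|_{\Omega_-},u|_{\Omega_+}\bigr)$ as a causal solution of the homogeneous \emph{complementary} transmission problem --- unit-speed wave equation inside $\Gamma$, speed-$m$ equation outside, coupled by those two conditions --- and invokes uniqueness for that coupled problem, e.g.\ via the conserved energy with weight $\kappa$ on the exterior field (or a Laplace-domain resolvent bound). Only after concluding $u|_{\Omega_+}=0$ and $v|_{\Omega_-}=0$ do the jump relations deliver \eqref{eq:2.6} and the transmission conditions of \eqref{eq:2.2}. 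With this repair your direct proof closes; alternatively, the Laplace-transform equivalence you mention as a fallback is exactly what the paper's citation provides.
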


\section{Semidiscretization in space}\label{sec:2B}

\paragraph{Galerkin semidiscretization in space.} We next address the semidiscretization in space of \eqref{eq:2.8} using a Galerkin scheme. Let then $X_h \subset H^{-1/2}(\Gamma)$ and $Y_h\subset H^{1/2}(\Gamma)$ be finite dimensional spaces. We will tag the spaces in the parameter $h$, with no geometric meaning. In order to say that a constant is independent of the choice of the finite dimensional spaces $X_h$ and $Y_h$ we will just say that it is independent of $h$. We will also assume that $\mathbb P_0(\Gamma)\subset X_h$, i.e., the space of constant functions is a subspace of $X_h$. The polar sets of $X_h$ and $Y_h$ are
\begin{alignat*}{6}
X_h^\circ&:=\{ \varphi\in H^{1/2}(\Gamma)\,:\, \langle\mu^h,\varphi\rangle_\Gamma=0\quad\forall \mu^h\in X_h\},\\
Y_h^\circ&:=\{\eta\in H^{-1/2}(\Gamma)\,:\,\langle \eta,\varphi^h\rangle_\Gamma=0\quad\forall \varphi^h\in Y_h\},
\end{alignat*}
where the angled bracket will be used for the $H^{-1/2}(\Gamma)\times H^{1/2}(\Gamma)$ duality product. Following \cite{LalSay09}, we will use polar sets to give shorthand forms of Galerkin-type identities. We will thus write
\[
f-g\in X_h^\circ\quad \mbox{to mean}\quad \langle \mu^h,f\rangle_\Gamma=\langle\mu^h,g\rangle_\Gamma \quad\forall\mu^h\in X_h.
\]
The Galerkin semidiscrete version of \eqref{eq:2.8}, associated to the spaces $X_h$ and $Y_h$ looks for a causal distribution $\lambda^h$ with values in $X_h$ and a causal distribution $\phi^h$ with values in $Y_h$ such that
\begin{equation}\label{eq:2.9}
\left[\begin{array}{cc}
	\mathcal V_{m}+\kappa\mathcal V & -\mathcal K_{m}-\mathcal K \\
	\mathcal J_{m}+\mathcal J   & \mathcal W_{m}+\frac1\kappa\mathcal W
\end{array}\right] * 
\left[\begin{array}{c} \lambda^h \\ \phi^h \end{array}\right]
-
\frac12\left[\begin{array}{c} \frac1\kappa\beta_0 \\  \beta_1\end{array}\right]
- \left[\begin{array}{cc}
	\mathcal V &-\mathcal K \\
	\frac1\kappa\mathcal J & \frac1\kappa\mathcal W
\end{array}\right] *
\left[\begin{array}{c} \beta_1 \\ \beta_0 \end{array}\right]
\in X_h^\circ\times Y_h^\circ.
\end{equation}
A solution of \eqref{eq:2.9} is then used to build approximations of $(u,v)$ using the same potential expressions as in \eqref{eq:2.7}:
\begin{equation}\label{eq:2.10}
u^h:=\mathcal S_m*\lambda^h-\mathcal D_m*\phi^h, \qquad v^h:=-\mathcal S*(\kappa\lambda^h-\beta_1)+\mathcal D*(\phi^h-\beta_0).
\end{equation}

\paragraph{An exotic transmission problem.}
There is a form, based on \cite{LalSay09, Say13, DomSay13}, of writing the Galerkin semidiscretization \eqref{eq:2.9}-\eqref{eq:2.10} by focusing directly on $(u^h,v^h)$. Note that the potentials are defined on both sides of the interface $\Gamma$. They also satisfy the following double transmission problem (with two functions on each side of the boundary and six transmission conditions): $(u^h,v^h)$ are causal $H^1_\Delta(\mathbb R^d\setminus\Gamma)$-valued distributions such that
\begin{subequations}\label{eq:2.11}
\begin{alignat}{6}
c^{-2} \ddot u^h=\kappa \Delta u^h &\qquad && \mbox{(in $L^2(\mathbb R^d\setminus\Gamma)$),}\\
\ddot v^h=\Delta v^h &\qquad && \mbox{(in $L^2(\mathbb R^d\setminus\Gamma)$),}\\
\jump{\gamma u^h}+\jump{\gamma v^h}=\beta_0 & & &\mbox{(in $H^{1/2}(\Gamma)$)},\\
\kappa \jump{\partial_\nu u^h}+\jump{\partial_\nu v^h}=\beta_1 & & &\mbox{(in $H^{-1/2}(\Gamma)$)},\\
\label{eq:2.11e}
\jump{\gamma u^h}\in Y_h & & & \mbox{(in $H^{1/2}(\Gamma)$)},\\
\label{eq:2.11f}
\jump{\partial_\nu u^h} \in X_h& & & \mbox{(in $H^{-1/2}(\Gamma)$)},\\
\label{eq:2.11g}
\gamma^+ u^h-\gamma^- v^h \in X_h^\circ & & & \mbox{(in $H^{1/2}(\Gamma)$)},\\
\label{eq:2.11h}
\kappa\partial_\nu^+ u^h-\partial_\nu^- v^h \in Y_h^\circ & & & \mbox{(in $H^{-1/2}(\Gamma)$)}.
\end{alignat}
\end{subequations}
The fact that there are six transmission conditions (instead of only four) is related to the fact that \eqref{eq:2.11g} and \eqref{eq:2.11h} can be understood as incomplete transmission conditions (only some moments of function are imposed to vanish), and they have to be complemented with dual conditions \eqref{eq:2.11e}-\eqref{eq:2.11f} to make the problem well-posed. As emphasized in \cite{LalSay09}, there is no need for $X_h$ and $Y_h$ to be finite dimensional. Any pair of closed spaces would do. In particular, when we choose $X_h=H^{-1/2}(\Gamma)$ and $Y_h=H^{1/2}(\Gamma)$, the conditions \eqref{eq:2.11e}-\eqref{eq:2.11f} are void (the jumps are in those spaces by definition) and the conditions \eqref{eq:2.11g}-\eqref{eq:2.11h} can be read as $\gamma^+ u^h-\gamma^- v^h=0$ and $\kappa\partial_\nu^+ u^h-\partial_\nu^- v^h=0$ respectively. The following result gives a precise relationship between \eqref{eq:2.11} and \eqref{eq:2.9} and states unique solvability of both. The proof of the result follows from \cite[Section 8]{LalSay09}.

\begin{proposition}\label{prop:2.2}
Let $(u^h,v^h)$ be a causal solution of \eqref{eq:2.11}, and let 
\begin{equation}\label{eq:2.12}
(\lambda^h,\phi^h):=(\jump{\partial_\nu u^h},\jump{\gamma u^h}).
\end{equation} 
Then $(\lambda^h,\phi^h)$ is a causal $(X_h\times Y_h)$-valued distribution solving \eqref{eq:2.9}. Reciprocally, if $(\lambda^h,\phi^h)$ is a causal $(X_h\times Y_h)$-valued distribution satisfying \eqref{eq:2.9} and $(u^h,v^h)$ are defined using \eqref{eq:2.10}, then $(u^h,v^h)$ are causal $H^1(\mathbb R^d\setminus\Gamma)$-valued distributions satisfying \eqref{eq:2.11}. Finally, problem \eqref{eq:2.11} admits a unique causal Laplace transformable solution.
\end{proposition}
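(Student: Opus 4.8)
The plan is to obtain the two equivalences by reading the representation \eqref{eq:2.10} through the boundary behaviour of the layer potentials, and then to prove unique solvability by a coercivity estimate for the Laplace transform of \eqref{eq:2.11}. The engine for both equivalences is the reproduction property built into the uniqueness of \eqref{eq:2.3}: any causal $H^1_\Delta(\mathbb R^d\setminus\Gamma)$-valued distribution $w$ with $m^{-2}\ddot w=\Delta w$ coincides with the potential generated by its own Cauchy data, $w=\mathcal S_m*\jump{\partial_\nu w}-\mathcal D_m*\jump{\gamma w}$. Alongside it I would record the jump relations $\jump{\gamma\,}\mathcal S_m=0$, $\jump{\partial_\nu}\mathcal D_m=0$, $\jump{\partial_\nu}\mathcal S_m=\mathrm I$, $\jump{\gamma\,}\mathcal D_m=-\mathrm I$ and the averaged relations $\gamma^\pm\mathcal S_m=\mathcal V_m$, $\partial_\nu^\pm\mathcal D_m=-\mathcal W_m$, $\gamma^\pm\mathcal D_m=\mathcal K_m\pm\tfrac12\mathrm I$, $\partial_\nu^\pm\mathcal S_m=\mathcal J_m\mp\tfrac12\mathrm I$, all of which follow from \eqref{eq:2.4}-\eqref{eq:2.5}.

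For the reverse implication I would take a causal $(X_h\times Y_h)$-valued solution $(\lambda^h,\phi^h)$ of \eqref{eq:2.9}, define $(u^h,v^h)$ by \eqref{eq:2.10}, and read off everything from the relations above. The mapping properties of $\mathcal S_m,\mathcal D_m$ (at speed $m=c\sqrt\kappa$) and $\mathcal S,\mathcal D$ (at speed $1$) place $u^h,v^h$ in $H^1(\mathbb R^d\setminus\Gamma)$ and yield the two wave equations. The jump relations give $\jump{\gamma u^h}=\phi^h\in Y_h$, $\jump{\partial_\nu u^h}=\lambda^h\in X_h$, which are \eqref{eq:2.11e}-\eqref{eq:2.11f}, and $\jump{\gamma v^h}=\beta_0-\phi^h$, $\jump{\partial_\nu v^h}=\beta_1-\kappa\lambda^h$, whence \eqref{eq:2.11c}-\eqref{eq:2.11d} by addition. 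Finally, applying the one-sided traces and normal derivatives to \eqref{eq:2.10} and substituting the averaged relations expresses $\gamma^+u^h-\gamma^-v^h$ and $\kappa\partial_\nu^+u^h-\partial_\nu^-v^h$ as the first and second convolutional residuals of \eqref{eq:2.9}, so that \eqref{eq:2.11g}-\eqref{eq:2.11h} become exactly the two polar conditions in \eqref{eq:2.9} (the precise bookkeeping of the scalar factors is the computation carried out in \cite{LalSay09}). The forward implication is the same chain read backwards: starting from a causal solution of \eqref{eq:2.11}, the reproduction property applied to $u^h$ and to $v^h$, together with \eqref{eq:2.11c}-\eqref{eq:2.11d}, recovers \eqref{eq:2.10} for the pair \eqref{eq:2.12}; the constraints \eqref{eq:2.11e}-\eqref{eq:2.11f} make it $(X_h\times Y_h)$-valued, and \eqref{eq:2.11g}-\eqref{eq:2.11h} reproduce \eqref{eq:2.9}.

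The heart of the matter is unique solvability, which I would prove in the Laplace domain. For $\mathrm{Re}\,s=\sigma>0$ the transform of \eqref{eq:2.11} seeks $\mathbf U=(U,V)$ with $c^{-2}s^2U=\kappa\Delta U$ and $s^2V=\Delta V$ in $\mathbb R^d\setminus\Gamma$, subject to the transformed transmission and Galerkin conditions. I would test the two equations against $\bar s\,\overline U$ (weighted by $\kappa$) and $\bar s\,\overline V$, integrate by parts over $\Omega_-$ and $\Omega_+$, and organise the boundary pairings. The decisive point is that the contributions of the incomplete conditions annihilate against their duals: $\langle\jump{\partial_\nu U},\overline{\gamma^+U-\gamma^-V}\rangle_\Gamma=0$ since $\jump{\partial_\nu U}\in X_h$ while $\gamma^+U-\gamma^-V\in X_h^\circ$, and symmetrically for the $Y_h$, $Y_h^\circ$ pair. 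With vanishing data the surviving form is positive, giving an estimate $\mathrm{Re}\big(\bar s\,a_s(\mathbf U,\mathbf U)\big)\gtrsim\underline\sigma\big(\|\nabla\mathbf U\|^2+|s|^2\|\mathbf U\|^2\big)$, $\underline\sigma:=\min\{1,\sigma\}$, which forces $\mathbf U=0$ (uniqueness). Existence comes from Lax--Milgram applied to $a_s$ on the closed subspace carved out by the constraints, with a bound rational in $s$ on each line $\mathrm{Re}\,s=\sigma$; the inversion theorem underlying the framework of \cite{SaySUB} then returns a causal, Laplace transformable distributional solution.

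The step I expect to be the main obstacle is this coercivity estimate. Choosing the $\kappa$-weighting so that the boundary terms produced by all four Galerkin constraints cancel simultaneously across the two wave speeds, and extracting the correct powers of $\sigma$ and $|s|$, is delicate, and it is exactly this $s$-dependence that governs the time-domain regularity and the later error bounds. Once the estimate is in hand, the remaining passage from the strong Laplace-domain solution to a genuine causal distributional solution of \eqref{eq:2.11} is routine and is handled by the reconciliation tools already used in the paper.
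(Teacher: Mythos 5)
Your proposal is correct and is essentially the paper's own proof, which the paper simply delegates to \cite[Section 8]{LalSay09}: the equivalences are obtained there exactly as you describe, from the jump/average relations of the retarded Calder\'on calculus together with the reproduction property that follows from uniqueness in \eqref{eq:2.3}, and unique solvability comes from the Laplace-domain variational argument in which the $X_h$--$X_h^\circ$ and $Y_h$--$Y_h^\circ$ pairings annihilate the interface terms --- the same cancellation mechanism the paper itself records as identity \eqref{eq:3.3B} and reuses in Proposition \ref{prop:6.1}. One cosmetic remark: if you carry out the scalar bookkeeping you deferred to \cite{LalSay09}, the trace identities yield the constant term $\frac12\bigl(\beta_0,\frac1\kappa\beta_1\bigr)$ rather than the $\frac12\bigl(\frac1\kappa\beta_0,\beta_1\bigr)$ printed in \eqref{eq:2.8}--\eqref{eq:2.9} (the one-sided trace computation gives $\gamma^+u^h-\gamma^-v^h$ equal to the first residual and $\kappa\partial_\nu^+u^h-\partial_\nu^-v^h$ equal to $\kappa$ times the second), so your computation would surface what appears to be a misplaced $\frac1\kappa$ in the paper's displayed system, not an error in your argument.
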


\paragraph{The Galerkin solver and the Galerkin error operator.} The Galerkin solver is the linear operator that for given $\beta_0,\beta_1$, solves the problem \eqref{eq:2.9}-\eqref{eq:2.10} or equivalently solves \eqref{eq:2.11}-\eqref{eq:2.12}. The Galerkin error operator is the operator that given a causal distribution $(\lambda,\phi)$ with values in $H^{-1/2}(\Gamma)\times H^{1/2}(\Gamma)$, looks for an $X_h\times Y_h$-valued causal distribution $(\lambda^h,\phi^h)$ satisfying
\begin{subequations}\label{eq:2.13}
\begin{equation}\label{eq:2.13a}
\left[\begin{array}{cc}
	\mathcal V_{m}+\kappa\mathcal V & -\mathcal K_{m}-\mathcal K \\
	\mathcal J_{m}+\mathcal J   & \mathcal W_{m}+\frac1\kappa\mathcal W
\end{array}\right] * 
\left[\begin{array}{c} \lambda^h-\lambda \\ \phi^h-\phi \end{array}\right]\in X_h^\circ\times Y_h^\circ,
\end{equation}
and then outputs
\begin{equation}
(\varepsilon_\lambda^h,\varepsilon^h_\phi):=(\lambda^h-\lambda,\phi^h-\phi),
\end{equation}
as well as the potentials
\begin{equation}\label{eq:2.13c}
e^h_u:=\mathcal S_m * \varepsilon^h_\lambda-\mathcal D_m*\varepsilon^h_\phi,
\qquad
e^h_v:=-\kappa\mathcal S * \varepsilon^h_\lambda+\mathcal D*\varepsilon^h_\phi.
\end{equation}
\end{subequations}
The potentials $e^h_u$ and $e^h_v$ are causal solutions of the following problem:
\begin{subequations}\label{eq:2.14}
\begin{alignat}{6}
c^{-2} \ddot e^h_u=\kappa \Delta e^h_u &\qquad && \mbox{(in $L^2(\mathbb R^d\setminus\Gamma)$),}\\
\ddot e^h_v=\Delta e^h_v &\qquad && \mbox{(in $L^2(\mathbb R^d\setminus\Gamma)$),}\\
\jump{\gamma e^h_u}+\jump{\gamma e^h_v}=0 & & &\mbox{(in $H^{1/2}(\Gamma)$)},\\
\kappa \jump{\partial_\nu e^h_u}+\jump{\partial_\nu e^h_v}=0 & & &\mbox{(in $H^{-1/2}(\Gamma)$)},\\
\jump{\gamma e^h_u}+\phi\in Y_h & & & \mbox{(in $H^{1/2}(\Gamma)$)},\\
\jump{\partial_\nu e^h_u}+\lambda \in X_h& & & \mbox{(in $H^{-1/2}(\Gamma)$)},\\
\gamma^+ e^h_u-\gamma^- e^h_v \in X_h^\circ & & & \mbox{(in $H^{1/2}(\Gamma)$)},\\
\kappa\partial_\nu^+ e^h_u-\partial_\nu^- e^h_v \in Y_h^\circ & & & \mbox{(in $H^{-1/2}(\Gamma)$)}.
\end{alignat}
\end{subequations}
(The problem is very similar to \eqref{eq:2.11}. The only modification is which of the transmission conditions are non-homogeneous.)
Using Laplace domain techniques as in \cite[Chapter 5]{SaySUB}, the Galerkin solver and the Galerkin error operator can be described as convolution operators. We next state the two main theorems of this part of the paper. Their proofs will be the goal of the next two sections. Some notation is first needed. For a given Hilbert space $X$, we consider the spaces
\begin{alignat*}{6}
\mathcal C^k_+(X) &:= \{ f\in \mathcal C^{k}(\mathbb R;X)\,:\, \mathrm{supp}\,f\subset [0,\infty)\},\\
\mathcal W^k_+(X)&:= \{ f\in \mathcal C^{k-1}(\mathbb R;X)\,:\, \mathrm{supp}\, f\subset [0,\infty), \, f^{(k)}\in L^1_{\mathrm{loc}}(\mathbb R;X)\}.
\end{alignat*}
We also consider the cummulative seminorms
\[
H_k(f,t;X):=\sum_{j=0}^k \int_0^t \| f(\tau)\|_X\mathrm d\tau
\]
and the antidifferentiation operator
\[
(\partial^{-1}f)(t):=\int_0^t f(\tau)\mathrm d\tau.
\]

\begin{theorem}\label{th:A}
Let $\beta_0\in \mathcal W_+^2(H^{1/2}(\Gamma))$ and $\beta_1\in \mathcal W^1_+(H^{-1/2}(\Gamma))$, and let $(u^h,v^h,\lambda^h,\phi^h)$ be the solution of \eqref{eq:2.9}-\eqref{eq:2.10}. Then
\[
u^h,v^h \in \mathcal C^1_+(L^2(\mathbb R^d))\cap \mathcal C^0_+(H^1(\mathbb R^d\setminus\Gamma)), \quad \
\phi^h \in \mathcal C^0_+(H^{1/2}(\Gamma))
\]
and
\begin{alignat*}{6}
& \| u^h(t)\|_{1,\mathbb R^d\setminus\Gamma}
+
\| v^h(t)\|_{1,\mathbb R^d\setminus\Gamma}
+
\|\phi^h(t)\|_{1/2,\Gamma} \qquad & \\
 & \qquad \le 
C \left(
H_3(\partial^{-1}\beta_0, t; H^{1/2}(\Gamma))
+
H_2(\partial^{-1}\beta_1, t; H^{-1/2}(\Gamma))\right).
\end{alignat*}
If $\beta_0\in \mathcal W_+^3(H^{1/2}(\Gamma))$ and $\beta_1\in \mathcal W^2_+(H^{-1/2}(\Gamma))$, then
$
\lambda^h \in \mathcal C^0_+(H^{-1/2}(\Gamma))
$
and
\[
\|\lambda^h (t)\|_{-1/2,\Gamma} \le 
C \left(
H_3(\beta_0, t; H^{1/2}(\Gamma))
+
H_2(\beta_1, t; H^{-1/2}(\Gamma))\right).
\]
\end{theorem}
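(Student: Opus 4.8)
The plan is to set aside the integral-equation form (2.9)--(2.10) and work with the equivalent transmission problem (2.11), whose unique causal solvability and equivalence to (2.9) are supplied by Proposition \ref{prop:2.2}. Since $\phi^h=\jump{\gamma u^h}$ and $\lambda^h=\jump{\partial_\nu u^h}$, every quantity in the statement will be read off from estimates on the potential pair $(u^h,v^h)$. The first step is to recast (2.11), after homogenizing the data conditions (2.11c)--(2.11d), as a first-order-in-time evolution equation $\dot U = A_h U + F$ on a product Hilbert space $\mathbb H$ whose elements are quadruples $(u^h,v^h,\dot u^h,\dot v^h)$, with the velocities introduced as independent unknowns. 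The inner product on $\mathbb H$ is chosen to be the energy for which the spatial part is formally skew-symmetric: the $c^{-2}$-weighting on the $u^h$-velocity and the $\kappa$-weighting on $\nabla u^h$, plain weighting for the $v^h$-components, so that Green's first identity produces only boundary pairings.

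The heart of the argument is to prove that $A_h$, with domain encoding the constraints (2.11e)--(2.11h), is maximal dissipative, so that by Lumer--Phillips it generates a contraction semigroup with \emph{all constants independent of $h$}. This is where the discrete structure pays off: in evaluating $\mathrm{Re}\langle A_h U, U\rangle$, the only surviving boundary terms are $\langle\jump{\partial_\nu u^h},\gamma^+u^h-\gamma^-v^h\rangle_\Gamma$ and $\langle\kappa\partial_\nu^+u^h-\partial_\nu^-v^h,\jump{\gamma u^h}\rangle_\Gamma$, and both vanish exactly because (2.11f) is polar to (2.11g) and (2.11e) is polar to (2.11h). The range condition $(I-A_h)U=F$ reduces to a coercive variational problem on the closed constrained subspace of $\mathbb H$; coercivity is inherited from the continuous operators, so the resolvent bound — hence the contraction constant — does not see the spaces $X_h,Y_h$. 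I expect this $h$-uniform maximal dissipativity, together with arranging the domain so that dissipativity is exact, to be the genuine obstacle of the proof, far more than the later bookkeeping.

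Next I would homogenize the transmission data. Because $\beta_0,\beta_1$ enter through jump conditions, the lifting must be compatible with the discrete constraints (splitting, e.g., $\beta_0$ into a $Y_h$-component and a remainder), and it is this lift, fed into the PDE, that produces the forcing $F$ as an expression in $\beta_0,\beta_1$ and their time derivatives. Invoking the abstract generation-and-regularity result stated in the background, together with the Duhamel formula and the contraction bound $\|e^{tA_h}\|\le 1$, gives $U(t)=\int_0^t e^{(t-\tau)A_h}F(\tau)\,\mathrm d\tau$ and hence $\|U(t)\|$ controlled by cumulative integrals of $F$, with no exponential growth. Each time-derivative needed to reach a classical solution costs one derivative of the data, which is precisely the mechanism turning $\beta_0\in\mathcal W_+^2$, $\beta_1\in\mathcal W_+^1$ into the appearance of up to second and first derivatives inside $H_3(\partial^{-1}\beta_0,t)$ and $H_2(\partial^{-1}\beta_1,t)$; the $\partial^{-1}$ shift reflects that the natural data for the first-order system are antiderivatives of the jump data. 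Reading the energy norm of $U(t)$ yields $u^h,v^h\in\mathcal C^1_+(L^2)\cap\mathcal C^0_+(H^1(\mathbb R^d\setminus\Gamma))$ and the bound on $\|u^h(t)\|_{1}+\|v^h(t)\|_{1}$, while the trace inequality $\|\gamma^\pm w\|_{1/2,\Gamma}\le C\|w\|_{1,\mathbb R^d\setminus\Gamma}$ handles $\phi^h=\jump{\gamma u^h}$.

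Finally, the estimate for $\lambda^h=\jump{\partial_\nu u^h}$ requires controlling a weak normal derivative, hence the full $H^1_\Delta(\mathbb R^d\setminus\Gamma)$-norm of $u^h$; since $\Delta u^h$ is tied to $\ddot u^h$ through the PDE (2.11a), bounding $\|\lambda^h(t)\|_{-1/2,\Gamma}$ costs exactly one extra time-derivative of everything. This is why the sharper hypotheses $\beta_0\in\mathcal W_+^3$, $\beta_1\in\mathcal W_+^2$ and the un-antidifferentiated seminorms $H_3(\beta_0,t)+H_2(\beta_1,t)$ appear: one repeats the previous step on the differentiated evolution equation $\dot{(\dot U)}=A_h\dot U+\dot F$, and then recovers the normal trace by combining the $H^1$ and $L^2$ (via $\Delta u^h=c^{-2}\kappa^{-1}\ddot u^h$) bounds through the definition of $\partial_\nu^\pm$ on $H^1_\Delta$.
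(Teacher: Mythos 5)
Your plan has a genuine gap, and it is not where you think it is. The step you call the heart of the argument --- exact cancellation of the boundary pairings and $h$-uniform maximal dissipativity --- is the routine part: it is the same algebra as identity \eqref{eq:3.3B} in the proof of Proposition \ref{prop:3.1}(c), and $h$-uniformity comes for free from the polarity structure. The real obstruction is the Hilbert space $\mathbb H$ you posit. Your energy inner product carries no $L^2$-term for the displacement components, only $\kappa\|\nabla u^h\|^2+\|\nabla v^h\|^2$ plus velocity terms, and on the unbounded set $\mathbb R^d\setminus\Gamma$ there is no Poincar\'e inequality. Consequently $H^1(\mathbb R^d\setminus\Gamma)^2\times L^2(\mathbb R^d)^2$ is \emph{not complete} under your norm, so Lumer--Phillips cannot be invoked on it as stated; passing to the completion forces homogeneous (Beppo--Levi) spaces, which is workable with real effort when $d=3$ but breaks down when $d=2$ (a case the paper covers): the completion of smooth compactly supported functions under the Dirichlet norm in $\mathbb R^2$ is classically known not to be a space of distributions, so the traces and jumps needed to define $D(A_h)$ and the constraints \eqref{eq:2.11e}--\eqref{eq:2.11h} do not survive. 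The obvious repair --- adding the missing $L^2$-terms so that $\mathbb H$ is complete --- ruins dissipativity: one only gets $\mathrm{Re}\,\langle A_hU,U\rangle\le \omega\|U\|^2$, hence a quasicontraction $\|e^{tA_h}\|\le e^{\omega t}$, and Duhamel then produces exponentially growing bounds instead of the cumulative-integral bounds $H_3(\partial^{-1}\beta_0,t;\punto)+H_2(\partial^{-1}\beta_1,t;\punto)$, uniform in $t$, that are the whole point of Theorem \ref{th:A}.

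This is precisely the difficulty the paper's two-stage structure is designed to avoid: Section \ref{sec:4} poses the evolution problem on the truncated balls $B_T$ with a homogeneous Dirichlet condition on $\partial B_T$, where the Poincar\'e inequality \eqref{eq:3.33} holds (Proposition \ref{prop:3.1}(b)), the energy space is complete, and the second-order theory of Proposition \ref{prop:3.2} yields constant-free cumulative bounds (Proposition \ref{prop:3.4}); Section \ref{sec:5} then pays the price of truncation, proving via Laplace-domain kernel estimates and a distributional Kirchhoff formula (Propositions \ref{prop:5.1} and \ref{prop:5.2}) that the truncated solution coincides with the free-space one up to time $T+\delta$. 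Note also that your proposal silently skips the reconciliation step even on its own terms: Proposition \ref{prop:2.2} asserts uniqueness among causal \emph{Laplace-transformable distributional} solutions of \eqref{eq:2.11}, so after constructing a strong semigroup solution you would still have to show that its extension by zero to $t<0$ is such a distribution before concluding that it equals the solution of \eqref{eq:2.9}--\eqref{eq:2.10}; this is the ``quite technical reconciliation of the strong and weak solutions'' that occupies much of Section \ref{sec:5}. A workable version of your idea is to run the first-order/Lumer--Phillips argument on $B_T$, where it is a legitimate alternative to Proposition \ref{prop:3.2}, and then import the causality analysis of Section \ref{sec:5} unchanged; posed directly on free space as written, the proof does not go through.
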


\begin{theorem}\label{th:B}
Let $\phi\in \mathcal W_+^2(H^{1/2}(\Gamma))$ and $\lambda\in \mathcal W^1_+(H^{-1/2}(\Gamma))$, and let $(e^h_u,e^h_v,\varepsilon_\lambda^h,\varepsilon_\phi^h)$ be the solution of \eqref{eq:2.13}. Then
\[
e^h_u,e^h_v \in \mathcal C^1_+(L^2(\mathbb R^d))\cap \mathcal C^0_+(H^1(\mathbb R^d\setminus\Gamma)), \quad \
\varepsilon_\phi^h \in \mathcal C^0_+(H^{1/2}(\Gamma))
\]
and
\begin{alignat*}{6}
&
\| e_u^h(t)\|_{1,\mathbb R^d\setminus\Gamma}
+
\| e_v^h(t)\|_{1,\mathbb R^d\setminus\Gamma}
+
\|\varepsilon_\phi^h(t)\|_{1/2,\Gamma} \\
& \qquad  \le 
C \left(
H_3(\partial^{-1}\phi, t; H^{1/2}(\Gamma))
+
H_2(\partial^{-1}\lambda, t; H^{-1/2}(\Gamma))
\right).
\end{alignat*}
If $\phi\in \mathcal W_+^3(H^{1/2}(\Gamma))$ and $\lambda\in \mathcal W^2_+(H^{-1/2}(\Gamma))$, then
$
\varepsilon_\lambda^h \in \mathcal C^0_+(H^{-1/2}(\Gamma))
$
and
\[
\|\varepsilon_\lambda^h (t)\|_{-1/2,\Gamma} \le 
C \left(
H_3(\phi, t; H^{1/2}(\Gamma))
+
H_2(\lambda, t; H^{-1/2}(\Gamma))\right).
\]
\end{theorem}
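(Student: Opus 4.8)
The plan is to exploit that the error problem \eqref{eq:2.14} has exactly the same structure as the exotic transmission problem \eqref{eq:2.11} --- two wave equations coupled through homogeneous transmission conditions and the Galerkin membership and polar conditions --- so that it can be treated by the evolution-equation machinery used for Theorem \ref{th:A}, with the data now entering through the two constraints of \eqref{eq:2.14} that involve $\phi$ and $\lambda$ rather than through the jump conditions. First I would recast \eqref{eq:2.14} as a first-order-in-time abstract Cauchy problem on the energy space $\mathbf H:=H^1(\mathbb R^d\setminus\Gamma)^2\times L^2(\mathbb R^d)^2$, equipped with the inner product weighted by $c^{-2},\kappa$ as dictated by the two wave operators. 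Writing $\mathbf e:=(e^h_u,e^h_v,\dot e^h_u,\dot e^h_v)$, the system becomes $\dot{\mathbf e}=A\mathbf e$, where the unbounded operator $A$ carries in its domain the homogeneous transmission conditions $\jump{\gamma e^h_u}+\jump{\gamma e^h_v}=0$ and $\kappa\jump{\partial_\nu e^h_u}+\jump{\partial_\nu e^h_v}=0$ together with the discrete constraints $\jump{\gamma e^h_u}\in Y_h$, $\jump{\partial_\nu e^h_u}\in X_h$, $\gamma^+e^h_u-\gamma^-e^h_v\in X_h^\circ$, and $\kappa\partial_\nu^+e^h_u-\partial_\nu^-e^h_v\in Y_h^\circ$.

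Next I would strip the data off the constraints $\jump{\gamma e^h_u}+\phi\in Y_h$ and $\jump{\partial_\nu e^h_u}+\lambda\in X_h$ by a causal lifting. Choosing functions with prescribed jumps equal to $-\phi$ and $-\lambda$ (any such lifting automatically meets the membership requirement modulo $Y_h,X_h$), I subtract it from $(e^h_u,e^h_v)$ to obtain a homogeneous-constraint evolution equation $\dot{\mathbf w}=A\mathbf w+\mathbf F$ with $\mathbf w(0)=0$, whose source $\mathbf F$ is assembled from time derivatives of the lifted data. This reduction is exactly what produces the $\partial^{-1}\phi,\partial^{-1}\lambda$ and the cumulative seminorms $H_3,H_2$ in the statement: feeding the lifting into the first-order system differentiates the data, while the Duhamel integral and a subsequent antidifferentiation return the $\partial^{-1}$ and tally the surviving derivatives.

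The decisive step is to show that $A$ is maximal dissipative on $\mathbf H$. Dissipativity I expect from a Green's identity: evaluating $\mathrm{Re}\,\langle A\mathbf e,\mathbf e\rangle_{\mathbf H}$ leaves only boundary pairings on $\Gamma$, which should cancel once the homogeneous transmission conditions are combined with the Galerkin duality, since $X_h$ pairs trivially against $X_h^\circ$ and $Y_h$ against $Y_h^\circ$ in the $H^{-1/2}(\Gamma)\times H^{1/2}(\Gamma)$ product. This interplay between the membership and polar conditions is the heart of the argument and the place where the bookkeeping is most delicate. Maximality reduces to solving the resolvent equation $(sI-A)\mathbf e=\mathbf F$ for one $s>0$, which after eliminating the velocities becomes a coercive variational transmission problem of the form $s^2e_u-c^2\kappa\Delta e_u=\cdots$, $s^2e_v-\Delta e_v=\cdots$ on the constrained product space, solvable by Lax--Milgram with coercivity again furnished by the orthogonality built into the Galerkin conditions.

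Once $A$ generates a contraction semigroup, the regularity claims and the energy bound follow from the standard estimate $\|\mathbf w(t)\|_{\mathbf H}\le\int_0^t\|\mathbf F(\tau)\|_{\mathbf H}\,\mathrm d\tau$ applied to $\mathbf w$ and post-processed through the lifting, noting that $\|\mathbf w(t)\|_{\mathbf H}$ controls $\|e^h_u(t)\|_{1,\mathbb R^d\setminus\Gamma}+\|e^h_v(t)\|_{1,\mathbb R^d\setminus\Gamma}$ and the velocities in $L^2(\mathbb R^d)$. For $\varepsilon_\phi^h$ I would use the identity $\varepsilon_\phi^h=\jump{\gamma e^h_u}$, a consequence of the jump relations of the potentials entering \eqref{eq:2.13c}, together with the trace theorem, so that $\|\varepsilon_\phi^h(t)\|_{1/2,\Gamma}$ is absorbed into the same right-hand side. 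The recovery of $\varepsilon_\lambda^h=\jump{\partial_\nu e^h_u}$ lives in $H^{-1/2}(\Gamma)$ and requires the normal-derivative trace, hence control of $\Delta e^h_u$ in $L^2$; this is obtained by differentiating the Cauchy problem once more and re-running the semigroup estimate, which accounts for the extra derivative demanded of $\phi$ and $\lambda$ in the second half of the statement. The main obstacle throughout is the verification of m-dissipativity in the presence of the Galerkin constraints; everything else is a matter of careful derivative accounting.
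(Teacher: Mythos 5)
There is a genuine gap at what you yourself flag as the decisive step: the claim that your first-order operator is maximal dissipative, hence generates a \emph{contraction} semigroup, on $\mathbf H:=H^1(\mathbb R^d\setminus\Gamma)^2\times L^2(\mathbb R^d)^2$. The Green's identity you invoke cancels the boundary pairings only for the \emph{energy} pairing (weighted gradients of the position variables against weighted $L^2$ products of the velocities). If the inner product of $\mathbf H$ also contains the $L^2$ part of the position variables --- as it must for an $H^1$-based space to be complete --- then $\mathrm{Re}\,\langle A\mathbf e,\mathbf e\rangle_{\mathbf H}$ retains a non-cancelling cross term of the form $\mathrm{Re}\,(\dot e^h_u,e^h_u)_{L^2}+\mathrm{Re}\,(\dot e^h_v,e^h_v)_{L^2}$, so $A$ is only quasi-dissipative and the best you get is $\|e^{tA}\|\le e^{ct}$, which destroys the polynomial-in-$t$ bounds asserted in the theorem. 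If instead you equip $\mathbf H$ with only the energy seminorm, then on the unbounded set $\mathbb R^d\setminus\Gamma$ there is no Poincar\'e inequality (this is exactly why the constant in \eqref{eq:3.33} of Proposition \ref{prop:3.1}(b) is $C_T$, growing with the cut-off radius), the space is not complete under that seminorm, Lumer--Phillips does not apply, and you lose all control of the $L^2$ part of $\|e_u^h(t)\|_{1,\mathbb R^d\setminus\Gamma}$, which the theorem claims. This obstruction is precisely what drives the paper's architecture, which is absent from your proposal: work on truncated balls $B_T$ with a homogeneous Dirichlet condition on $\partial B_T$ (restoring a Friedrichs inequality and compact embedding, Proposition \ref{prop:3.1}), run a \emph{second-order} evolution-equation argument there with the lifting of Proposition \ref{prop:3.3} (Propositions \ref{prop:3.2} and \ref{prop:3.4}, and its variant with data $(\phi,\lambda)$), with bounds whose constants are uniform in $T$, and then undo the truncation by a finite-propagation-speed/causality argument --- Laplace-transform support estimates and a Kirchhoff representation (Propositions \ref{prop:5.1} and \ref{prop:5.2}) --- to identify the cut-off solution with the free-space one on $[0,T+\delta]$.

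A second, smaller gap: your semigroup construction would produce a strong solution of the transmission system \eqref{eq:2.14}, but the theorem concerns the solution of the convolution system \eqref{eq:2.13}, defined through retarded layer potentials. Identifying the two objects --- in particular justifying that, for the function you construct, $\varepsilon_\phi^h=\jump{\gamma e^h_u}$ and $\varepsilon_\lambda^h=\jump{\partial_\nu e^h_u}$ are indeed the densities solving \eqref{eq:2.13a} --- requires the reconciliation of distributional and strong solutions that occupies Section \ref{sec:5} of the paper; it cannot be dispatched by citing jump relations alone. That said, your overall bookkeeping --- lifting of the nonhomogeneous Galerkin constraints, Green's identity exploiting the $X_h/X_h^\circ$ and $Y_h/Y_h^\circ$ dualities, resolvent surjectivity via a coercive variational problem, Duhamel, the antidifferentiation that produces $\partial^{-1}\phi,\partial^{-1}\lambda$, and the extra derivative needed for $\varepsilon_\lambda^h$ --- faithfully mirrors the paper's; the proposal could be repaired either by transplanting it to the cut-off domains as the paper does, or by switching to a purely $L^2$-based first-order formulation, but as written the central contraction-semigroup claim fails.
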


\paragraph{Error estimate for Galerkin semidiscretization.} Note that Theorem \ref{th:B} is de facto an error estimate for Galerkin semidiscretization in space. The reason is simple: if $\phi$ and $\lambda$ take values in $Y_h$ and $X_h$ respectively, then all error quantities ($e^h_u$, $e^h_v$, $\varepsilon^h_\phi$, and $\varepsilon^h_\lambda$) vanish. Therefore, if $\Pi_{Y_h}:H^{1/2}(\Gamma)\to Y_h$ and $\Pi_{X_h}:H^{-1/2}(\Gamma)\to X_h$ are orthogonal projectors, we can place $\phi-\Pi_{Y_h}\phi$ and $\lambda-\Pi_{X_h}\lambda$ instead of $\phi$ and $\lambda$
in the right-hand-sides of the bounds in Theorem \ref{th:B}.

\section{Short term analysis in cut-off domains}\label{sec:4}

\paragraph{The cut-off problems.} Let $B_0:=B(\mathbf 0;R)=\{ \mathbf x\in \mathbb R^d\,:\, |\mathbf x|< R\}$ be such that $\overline\Omega_-\subset B_0$. Let then
\[
B_T:=B(\mathbf 0;R+\max\{1,m\} T). 
\]
We will now need two more trace operators
\[
\gamma_T:H^1(B_T\setminus\Gamma) \to H^{1/2}(\partial B_T), \qquad \partial_T^\nu: H^1_\Delta (B_T\setminus\Gamma) \to H^{-1/2}(\partial B_T),
\]
and the space
\[
H^1_{\partial B_T}(B_T\setminus\Gamma):=\{ w\in H^1(B_T\setminus\Gamma)\,:\, \gamma_T w=0\}.
\]
We next present cut-off strong versions of problems \eqref{eq:2.11} and \eqref{eq:2.14}. To unify notation of both problems and to make comparisons with \cite{LalSay09} simpler, the pairs $(u^h,v^h)$ in \eqref{eq:2.11} and $(e^h_u,e^h_v)$ are renamed $(u,u^\star)$. From this moment on starred variables will always correspond to the original exterior problem.
The data for the cut-off Galerkin solver are smooth enough (this will be precised later on) functions $\beta_0:[0,\infty)\to H^{1/2}(\Gamma)$ and $\beta_1:[0,\infty)\to H^{-1/2}(\Gamma)$. We look for two functions $u,u^\star:[0,\infty) \to H^1_\Delta(B_T\setminus\Gamma)$ satisfying, for all $t\ge 0$,
\begin{subequations}\label{eq:3.1}
\begin{alignat}{6}
& c^{-2} \ddot u(t)=\kappa \Delta u(t) \quad \mbox{in $ B_T\setminus\Gamma$}, &\qquad & 
\ddot u^\star(t)=\Delta u^\star(t) \quad \mbox{in $B_T\setminus\Gamma$}, \\
& \gamma_T u(t)=0, &\qquad &
\gamma_T u^\star (t) = 0,\\
&\gamma^+ u(t)-\gamma^- u^\star(t)\in X_h^\circ
&\qquad &
\kappa\partial_\nu^+ u(t)-\partial_\nu^- u^\star(t)\in Y_h^\circ,
\end{alignat}
vanishing initial conditions
\begin{equation}
u(0)=0,\quad \dot u(0)=0, \quad u^\star(0)=0, \quad \dot u^\star(0)=0,
\end{equation}
\end{subequations}
and two more transmission conditions
\begin{subequations}\label{eq:3.2}
\begin{alignat}{6}
& \jump{\gamma u}(t)+\jump{\gamma u^\star}(t)=\beta_0(t),
&\qquad&
\kappa\jump{\partial_\nu u}(t) +\jump{\partial_\nu u^\star}(t)=\beta_1(t),\\
&
\jump{\gamma u}(t)\in Y_h,
&\qquad&
\jump{\partial_\nu u}(t)\in X_h.
\end{alignat}
\end{subequations}
The cut-off Galerkin error operator equations, corresponding to \eqref{eq:2.14}, susbtitute \eqref{eq:3.2} by 
\begin{subequations}\label{eq:3.3}
\begin{alignat}{6}
& \jump{\gamma u}(t)+\jump{\gamma u^\star}(t)=0,
&\qquad&
\kappa\jump{\partial_\nu u}(t) +\jump{\partial_\nu u^\star}(t)=0,\\
&
\jump{\gamma u}(t)+\phi(t)\in Y_h,
&\qquad&
\jump{\partial_\nu u}(t)+\lambda(t)\in X_h,
\end{alignat}
\end{subequations}
where $\phi:[0,\infty)\to H^{1/2}(\Gamma)$ and $\lambda:[0,\infty)\to H^{-1/2}(\Gamma)$.

The plan of this section and the next is very similar to what appears in \cite[Chapters 7 \& 8]{SaySUB} and in \cite{BanLalSaySUB}. We will just sketch the different results, pointing out novelties and new difficulties that did not arise in simpler situations. The goal is simple. We first study problems \eqref{eq:3.1}, completed with \eqref{eq:3.2} or \eqref{eq:3.3}, and the quantities $\jump{\gamma u}$ and $\jump{\partial_\nu u}$ as $t$ grows, showing clear dependence on the cut-off section $T$ in all bounds. We next identify the solution of these two problems with the solution of \eqref{eq:2.11} or \eqref{eq:2.14} (and therefore of the integral system \eqref{eq:2.9} or \eqref{eq:2.13a}) in the time interval $[0,T]$. Finally, we use this identification to provide bounds for the solution of \eqref{eq:2.9}-\eqref{eq:2.10} or \eqref{eq:2.13} as a function of $t$.

\paragraph{Spaces, norms, and integration by parts formulas.} We need to introduce some spaces and operators in order to describe the solution of \eqref{eq:3.1}. Let
\begin{subequations}
\begin{equation}
H:=L^2(B_T)^2
\end{equation}
be endowed with the inner product
\begin{equation}
((u,u^*),(v,v^*))_H:= c^{-2} (u,v)_{B_T}+(u^\star,v^\star)_{B_T}.
\end{equation}
Consider also its subspace
\begin{equation}
V:=\{ (v,v^\star)\in H^1_{\partial B_T}(B_T\setminus\Gamma)^2\,:\,  \jump{\gamma v}=-\jump{\gamma v^\star}\in Y_h,\quad\gamma^+ v-\gamma^- v^\star\in X_h^\circ\},
\end{equation}
endowed with the inner product
\begin{equation}\label{eq:3.2d}
[(u,u^\star),(v,v^\star)]:=\kappa (\nabla u,\nabla v)_{B_T\setminus\Gamma}+(\nabla u^\star,\nabla v^\star)_{B_T\setminus\Gamma}.
\end{equation}
The proof that \eqref{eq:3.2d} is an inner product in $V$ is an easy exercise. Consider next the space
\begin{equation}
D(A):=\left\{ (v,v^\star)\in V\,:\, 
\begin{array}{l}
\Delta v, \Delta v^\star\in L^2(B_T\setminus\Gamma),\\
\jump{\partial_\nu v}=-\kappa^{-1}\jump{\partial_\nu v^\star}\in X_h, \,\kappa \partial_\nu^+ v-\partial_\nu^- v^\star\in Y_h^\circ
\end{array}
\right\},
\end{equation}
and the operator $A:D(A)\to H$ given by $A(v,v^\star):=(c^2\kappa\Delta v,\Delta v^\star)$.
\end{subequations}

\begin{proposition}\label{prop:3.1}
The following properties hold:
\begin{itemize}
\item[{\rm (a)}] The inclusion of $V$ into $H$ is dense and compact.
\item[{\rm (b)}] There exists $C_T$ independent of $h$ such that
\begin{equation}\label{eq:3.33}
\| (v,v^\star)\|_H \le C_T \| (v,v^\star)\|_V\qquad \forall (v,v^\star)\in V.
\end{equation}
\item[{\rm (c)}] The following Green identity holds:
\begin{equation}
(A(u,u^\star),(v,v^\star))_H+[(u,u^\star),(v,v^\star)]=0\qquad \forall (u,u^\star)\in D(A), \quad (v,v^\star)\in V.
\end{equation}
\item[{\rm (d)}] The operator $I-A:D(A)\to H$ is surjective.
\end{itemize}
\end{proposition}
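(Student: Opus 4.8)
The plan is to establish the items in the order (b), (a), (c), (d): the Friedrichs-type bound (b) both promotes the seminorm $[\cdot,\cdot]^{1/2}$ to a genuine norm and drives the compactness in (a), while the Green identity (c) is the algebraic core behind the surjectivity (d). For (b) I would split $B_T\setminus\Gamma$ into the interior $\Omega_-$ and the exterior shell $B_T\cap\Omega_+$. On the shell, $(v,v^\star)\in V$ forces $\gamma_T v=\gamma_T v^\star=0$, so a Friedrichs inequality bounds $\|v\|_{1,B_T\cap\Omega_+}$ and $\|v^\star\|_{1,B_T\cap\Omega_+}$ --- and with them the exterior traces $\gamma^+ v,\gamma^+ v^\star$ --- by the gradient norms, with a purely geometric constant growing in $T$. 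On $\Omega_-$ there is no Dirichlet datum, so I would invoke the generalized Poincar\'e inequality $\|w\|_{1,\Omega_-}\le C(\|\nabla w\|_{0,\Omega_-}+|\langle 1,\gamma^- w\rangle_\Gamma|)$ (valid since $\Omega_-$ is connected) and control the averages $\langle 1,\gamma^- v\rangle_\Gamma$, $\langle 1,\gamma^- v^\star\rangle_\Gamma$ through the constraints of $V$: since $\mathbb P_0(\Gamma)\subset X_h$, the relation $\gamma^+ v-\gamma^- v^\star\in X_h^\circ$ yields $\langle 1,\gamma^- v^\star\rangle_\Gamma=\langle 1,\gamma^+ v\rangle_\Gamma$, and $\jump{\gamma v}=-\jump{\gamma v^\star}$ rewrites $\gamma^- v$ in terms of exterior traces already bounded on the shell. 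As only $1\in X_h$ and geometric constants enter, the resulting $C_T$ is independent of $h$.

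Item (a) is then short: $\mathcal C^\infty_c(B_T\setminus\Gamma)^2$ consists of functions with vanishing traces, hence lies in $V$ and is dense in $H=L^2(B_T)^2$; and by (b) the $V$-norm is equivalent to the $H^1(B_T\setminus\Gamma)^2$-norm, so Rellich's theorem on the two Lipschitz pieces gives compactness of $V\hookrightarrow H$. For (c) I would apply Green's first identity on $\Omega_-$ and on $B_T\cap\Omega_+$ (the $\partial B_T$ terms vanish since $\gamma_T v=\gamma_T v^\star=0$), reducing the claim to the vanishing of the interface term $B:=\kappa(\langle\partial_\nu^- u,\gamma^- v\rangle_\Gamma-\langle\partial_\nu^+ u,\gamma^+ v\rangle_\Gamma)+\langle\partial_\nu^- u^\star,\gamma^- v^\star\rangle_\Gamma-\langle\partial_\nu^+ u^\star,\gamma^+ v^\star\rangle_\Gamma$. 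Using $\langle a^-,b^-\rangle-\langle a^+,b^+\rangle=\langle\jump{a},\ave{b}\rangle+\langle\ave{a},\jump{b}\rangle$ together with $\jump{\partial_\nu u^\star}=-\kappa\jump{\partial_\nu u}$ and $\jump{\gamma v^\star}=-\jump{\gamma v}$, I would recast $B=\kappa\langle\jump{\partial_\nu u},\ave{\gamma v}-\ave{\gamma v^\star}\rangle+\langle\kappa\ave{\partial_\nu u}-\ave{\partial_\nu u^\star},\jump{\gamma v}\rangle$. The constraints are tailored precisely so that $\ave{\gamma v}-\ave{\gamma v^\star}\in X_h^\circ$ while $\jump{\partial_\nu u}\in X_h$, and $\kappa\ave{\partial_\nu u}-\ave{\partial_\nu u^\star}\in Y_h^\circ$ while $\jump{\gamma v}\in Y_h$, so both pairings vanish.

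For (d) I would set $a(\cdot,\cdot):=(\cdot,\cdot)_H+[\cdot,\cdot]$ on $V$, which is bounded and coercive since $a(w,w)=\|w\|_H^2+\|w\|_V^2\ge\|w\|_V^2$, and solve $a((u,u^\star),(v,v^\star))=((f,f^\star),(v,v^\star))_H$ for all $(v,v^\star)\in V$ by Lax--Milgram. Testing against $\mathcal C^\infty_c$ functions recovers $u-c^2\kappa\Delta u=f$ and $u^\star-\Delta u^\star=f^\star$ (so $\Delta u,\Delta u^\star\in L^2$ and $(I-A)(u,u^\star)=(f,f^\star)$), while membership $(u,u^\star)\in V$ already supplies the trace constraints of $D(A)$. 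It remains to recover the normal-derivative constraints: integrating the variational identity by parts (reversing (c)), the volume terms cancel against the PDEs, leaving $B=0$ for every $(v,v^\star)\in V$. Parametrizing the admissible traces by $\gamma^+ v=p$, $\gamma^- v^\star=q$ with $p-q\in X_h^\circ$ and $\jump{\gamma v}=\psi\in Y_h$, this becomes $\langle\kappa\jump{\partial_\nu u},p\rangle+\langle\jump{\partial_\nu u^\star},q\rangle+\langle\kappa\partial_\nu^- u-\partial_\nu^+ u^\star,\psi\rangle=0$; taking $\psi=0$ with $p=q$ free gives $\kappa\jump{\partial_\nu u}+\jump{\partial_\nu u^\star}=0$, then $\psi=0$ with $p-q\in X_h^\circ$ free gives $\kappa\jump{\partial_\nu u}\in X_h$, and $p=q=0$ with $\psi\in Y_h$ free gives $\kappa\partial_\nu^- u-\partial_\nu^+ u^\star=\kappa\partial_\nu^+ u-\partial_\nu^- u^\star\in Y_h^\circ$ --- exactly the conditions defining $D(A)$.

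I anticipate the main obstacle to be twofold. In (b) one must recognize that the standing hypothesis $\mathbb P_0(\Gamma)\subset X_h$ is precisely what lets the interior boundary averages be controlled uniformly in $h$; without constants in $X_h$ the argument collapses. In (d) the delicate point is the passage from $X_h^\circ$-orthogonality of $\kappa\jump{\partial_\nu u}$ to genuine membership $\jump{\partial_\nu u}\in X_h$, which rests on the bipolar identity $(X_h^\circ)^\circ=X_h$ for the closed (here finite dimensional) subspace $X_h$ in the $H^{-1/2}(\Gamma)\times H^{1/2}(\Gamma)$ duality. The jump/average bookkeeping in (c)--(d) is otherwise routine, provided one is careful with the sign convention of $\jump{\cdot}$ and with the outward normals on the two subdomains.
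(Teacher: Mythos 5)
Your proof is correct, and its overall architecture matches the paper's: (c) via an algebraic identity for the interface pairings, and (d) via Lax--Milgram for the coercive form $(\cdot,\cdot)_H+[\cdot,\cdot]$, testing with $\mathcal D(B_T\setminus\Gamma)^2$ to extract the PDEs, and then recovering the $D(A)$ transmission conditions from surjectivity of the trace parametrization. The one genuine divergence is in (b): the paper simply observes that $V$ sits inside the fixed, $h$-independent space $\{(v,v^\star)\in H^1_{\partial B_T}(B_T\setminus\Gamma)^2\,:\,\jump{\gamma v}=-\jump{\gamma v^\star},\ \langle 1,\gamma^+v-\gamma^-v^\star\rangle=0\}$ (this containment is exactly where $\mathbb P_0(\Gamma)\subset X_h$ enters) and gets \eqref{eq:3.33} there by a single compactness argument, whereas you prove the bound constructively: Friedrichs on the shell $B_T\cap\Omega_+$ using the zero trace on $\partial B_T$, generalized Poincar\'e on $\Omega_-$, and control of the interior boundary averages through $1\in X_h$ and the jump relation. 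Both routes pivot on the identical use of $\mathbb P_0(\Gamma)\subset X_h$; yours makes the mechanism (and the growth of $C_T$ with $T$) explicit at the cost of length, while the paper's is shorter but non-quantitative. Your jump/average bookkeeping in (c), namely $B=\kappa\langle\jump{\partial_\nu u},\ave{\gamma v}-\ave{\gamma v^\star}\rangle+\langle\kappa\ave{\partial_\nu u}-\ave{\partial_\nu u^\star},\jump{\gamma v}\rangle$, is a repackaging of the paper's four-term identity \eqref{eq:3.3B}, and in (d) your coordinates $(p,q,\psi)=(\gamma^+v,\gamma^-v^\star,\jump{\gamma v})$ are equivalent to the paper's surjectivity claim onto $X_h^\circ\times Y_h\times H^{1/2}(\Gamma)$; note that, exactly like the paper, you assert rather than prove that every admissible trace triple is realized by some element of $V$ (a standard right inverse of the two-sided trace operator supplies this).
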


\begin{proof}
The proof of  (a) is straightfoward. To verify (b), note that there exists $C_T>0$ such that \eqref{eq:3.33} holds for all $(v,v^\star)$ in the space
\[
\{ (v,v^\star)\in H^1_{\partial B_T}(B_T\setminus\Gamma)^2\,:\, \jump{\gamma v}=-\jump{\gamma v^\star}, \,\langle 1, \gamma^+v-\gamma^- v^\star\rangle=0\}.
\]
This can be proved with a simple compactness argument. To prove (c) it is enough to note that some simple algebraic manipulations yield:
\begin{alignat}{6}
\nonumber
& \hspace{-2cm}\kappa\left(\langle \partial_\nu^- u,\gamma^- v\rangle-\langle\partial_\nu^+ u,\gamma^+ v\rangle\right)+\langle\partial_\nu^- u^\star,\gamma^-v^\star\rangle-\langle \partial_\nu^+ u^\star,\gamma^+ v^\star\rangle \\
\label{eq:3.3B}
= \, & \kappa\langle \jump{\partial_\nu u},\gamma^+ v-\gamma^- v^\star\rangle
	+\langle \kappa\partial_\nu^+u-\partial_\nu^- u^\star,\jump{\gamma v}\rangle\\
	&  +\langle\partial_\nu^- u^\star,\jump{\gamma v}+\jump{\gamma v^\star}\rangle 
	+\langle \kappa \jump{\partial_\nu u}+\jump{\partial_\nu u^\star},\gamma^+ v^\star\rangle.
\nonumber
\end{alignat}
Let us finally sketch the proof of (d). Given $(f,f^\star)\in H$, we solve the coercive variational problem
\begin{subequations}\label{eq:3.3C}
\begin{alignat}{6}
&  (u,u^\star)\in V,\\
&  ((u,u^\star),(v,v^\star))_H+[(u,u^\star),(v,v^\star)]=((f,f^\star),(v,v^\star))_H \quad\forall (v,v^\star)\in V.
\end{alignat}
\end{subequations}
Testing equations \eqref{eq:3.3C} with $(v,v^\star)\in \mathcal D(B_T\setminus\Gamma)^2$ (the space of smooth compactly supported functions), it follows that
\begin{equation}\label{eq:3.3D}
-c^2\kappa \Delta u+u=f\quad \mbox{in $B_T\setminus\Gamma$}, \qquad -\Delta u^\star+u^\star=f^\star \quad\mbox{in $B_T\setminus\Gamma$}.
\end{equation}
Plugging \eqref{eq:3.3D} in \eqref{eq:3.3C}, using the definition of weak normal derivatives and \eqref{eq:3.3B}, we then show that
\begin{equation}\label{eq:3.3E}
\kappa\langle \jump{\partial_\nu u},\gamma^+ v-\gamma^- v^\star\rangle
	+\langle \kappa\partial_\nu^+u-\partial_\nu^- u^\star,\jump{\gamma v}\rangle
	+\langle \kappa \jump{\partial_\nu u}+\jump{\partial_\nu u^\star},\gamma^+ v^\star\rangle=0\quad\forall (v,v^\star)\in V.
\end{equation}
Finally, the map from $H^1_{\partial B_T}(B_T\setminus\Gamma)^2\to H^{1/2}(\Gamma)^4$ given by
\[
(v,v^\star) \longmapsto 
\left[\begin{array}{c}
\gamma^+v-\gamma^-v^\star\\
\jump{\gamma v}\\
\gamma^+v^\star\\
\jump{\gamma v}+\jump{\gamma v^\star}
\end{array}\right]
=
\left[\begin{array}{cccc}
	0 & 1 & -1 & 0 \\
	1 & -1 & 0 & 0 \\
	0 & 0 & 0 & 1 \\
	1 & -1 & 1 & -1
\end{array}\right]
\left[\begin{array}{c}
	\gamma^ - v \\
	\gamma^+ v \\
	\gamma^- v^\star \\
	\gamma^+ v^\star
\end{array}\right]
\]
is surjective, and therefore that the map
\[
V\ni (v,v^\star)\longmapsto 
(\gamma^+v-\gamma^-v^\star,\jump{\gamma v},\gamma^+v^\star)\in
 X_h^\circ\times Y_h \times H^{1/2}(\Gamma)
\]
is also surjective. This implies that the condition \eqref{eq:3.3E} yields what is necessary to ensure that $(u,u^\star)\in D(A)$ and therefore $(u,u^\star)-A(u,u^\star)=(f,f^\star)$ by \eqref{eq:3.3D}.
\end{proof}

\begin{proposition}[Second order differential equations]\label{prop:3.2}
Let $D(A)\subset V \subset H$  be Hilbert spaces and $A:D(A)\to H$ be a bounded linear operator. Assume that {\rm (a)-(d)} in Proposition \ref{prop:3.1} are satisfied. If $f\in \mathcal C([0,\infty);V)$, then the initial value problem
\begin{equation}\label{eq:3.5}
\ddot u(t)=Au(t)+f(t),\quad t\ge 0, \qquad u(0)=\dot u(0)=0.
\end{equation}
has a unique solution
\begin{equation}\label{eq:3.55}
u\in \mathcal C([0,\infty);D(A))\cap \mathcal C^1([0,\infty);V)\cap \mathcal C^2([0,\infty);H).
\end{equation}
For all $t\ge 0$, this solution satisfies
\begin{subequations}\label{eq:3.56}
\begin{alignat}{6}
\label{eq:3.56a}
C_T^{-1} \| u(t)\|_H \le \| u(t)\|_V  & \le  \int_0^t \| f(\tau)\|_H\mathrm d\tau ,\\
\|\dot u(t)\|_V  &\le  \int_0^t \| f(\tau)\|_V \mathrm d\tau,\\
\label{eq:3.56c}
\|\dot u(t)\|_H & \le   \int_0^t \| f(\tau)\|_H \mathrm d \tau,\\
\| Au(t)\|_H  & \le  \int_0^t \| f(\tau)\|_V \mathrm d\tau.
\end{alignat}
If $f\in \mathcal C^1([0,\infty);H)$ and $f(0)=0$, then problem \eqref{eq:3.5} has a unique solution in \eqref{eq:3.55} satisfying, for all $t$
\begin{alignat}{6}
\|\dot u(t)\|_V  & \le \int_0^t \| \dot f(\tau)\|_H \mathrm d\tau,\\
\| Au(t)\|_H &\le 2 \int_0^t \|\dot f(\tau)\|_H \mathrm d \tau,
\end{alignat}
as well as \eqref{eq:3.56a} and \eqref{eq:3.56c}.
\end{subequations}
\end{proposition}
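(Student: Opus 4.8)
The plan is to recast the second-order problem \eqref{eq:3.5} as a first-order conservative evolution equation on the energy space $\mathcal H:=V\times H$, equipped with the inner product $((u,w),(u',w'))_{\mathcal H}:=[u,u']+(w,w')_H$ (writing $[\cdot,\cdot]$ for the inner product of $V$ and $(\cdot,\cdot)_H$ for that of $H$), and to introduce the operator $\mathcal A(u,w):=(w,Au)$ with domain $D(\mathcal A):=D(A)\times V$. If $\mathbf u=(u_1,u_2)$ solves $\dot{\mathbf u}=\mathcal A\mathbf u+\mathbf f$ with $\mathbf f:=(0,f)$ and $\mathbf u(0)=0$, then $u_2=\dot u_1$ and $u_1$ solves \eqref{eq:3.5}; so the entire statement reduces to an analysis of $\mathbf u=(u,\dot u)$ in $\mathcal H$.

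The central step is to prove that $\mathcal A$ generates a unitary $C_0$-group on $\mathcal H$. First I would use the Green identity, Proposition \ref{prop:3.1}(c), in the form $(Au,w)_H=-[u,w]$ to compute $(\mathcal A(u,w),(u,w))_{\mathcal H}=[w,u]+(Au,w)_H=0$ for every $(u,w)\in D(\mathcal A)$, so that $\mathcal A$ is skew-symmetric, hence $\mathcal A$ and $-\mathcal A$ are dissipative. Next I would verify the range conditions: solving $(I-\mathcal A)(u,w)=(f_1,f_2)$ and $(I+\mathcal A)(u,w)=(f_1,f_2)$ for $(f_1,f_2)\in\mathcal H$ recovers $w\in V$ from $u$ and $f_1$ and reduces in both cases to an equation $(I-A)u=f_1\pm f_2\in H$, solvable by Proposition \ref{prop:3.1}(d); injectivity of $I-A$ follows from the same Green identity. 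Thus $I\pm\mathcal A$ are surjective, so $\mathcal A$ is maximal dissipative; in a Hilbert space this forces $\mathcal A$ to be densely defined and closed, and the Lumer--Phillips theorem applied to $\pm\mathcal A$ (equivalently Stone's theorem) yields a unitary group $\{e^{t\mathcal A}\}_{t\in\mathbb R}$ with $\|e^{t\mathcal A}\|_{\mathcal L(\mathcal H)}=1$.

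With the group in hand, existence, uniqueness and the regularity \eqref{eq:3.55} follow from the standard theory of the inhomogeneous Cauchy problem, whose mild solution is $\mathbf u(t)=\int_0^t e^{(t-\tau)\mathcal A}\mathbf f(\tau)\,\mathrm d\tau$. When $f\in\mathcal C([0,\infty);V)$ one has $\mathbf f=(0,f)\in\mathcal C([0,\infty);D(\mathcal A))$ (since $0\in D(A)$ and $\mathcal A\mathbf f=(f,0)$ is continuous into $\mathcal H$, using Proposition \ref{prop:3.1}(b) to control the graph norm); when $f\in\mathcal C^1([0,\infty);H)$ one has $\mathbf f\in\mathcal C^1([0,\infty);\mathcal H)$. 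In either case the mild solution is classical, $\mathbf u\in\mathcal C^1([0,\infty);\mathcal H)\cap\mathcal C([0,\infty);D(\mathcal A))$, and reading this componentwise through $D(\mathcal A)=D(A)\times V$ and $\mathcal H=V\times H$ gives exactly $u\in\mathcal C([0,\infty);D(A))\cap\mathcal C^1([0,\infty);V)\cap\mathcal C^2([0,\infty);H)$.

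Finally, all bounds come from unitarity applied to Duhamel's formula. Since $\|e^{s\mathcal A}\|=1$, we get $\|\mathbf u(t)\|_{\mathcal H}\le\int_0^t\|f(\tau)\|_H\,\mathrm d\tau$; reading $\|\mathbf u\|_{\mathcal H}^2=\|u\|_V^2+\|\dot u\|_H^2$ and invoking Proposition \ref{prop:3.1}(b) yields \eqref{eq:3.56a} and \eqref{eq:3.56c}. For the remaining two bounds I would commute $\mathcal A$ through the integral, legitimate because $\mathbf f$ is $D(\mathcal A)$-valued and $\mathcal A$ is closed, to obtain $\mathcal A\mathbf u(t)=\int_0^t e^{(t-\tau)\mathcal A}(f(\tau),0)\,\mathrm d\tau$; since $\mathcal A\mathbf u=(\dot u,Au)$ this bounds both $\|\dot u(t)\|_V$ and $\|Au(t)\|_H$ by $\int_0^t\|f(\tau)\|_V\,\mathrm d\tau$. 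For the sharper estimates under $f\in\mathcal C^1$ with $f(0)=0$, I would integrate by parts in time so that $\dot{\mathbf u}(t)=\int_0^t e^{(t-\tau)\mathcal A}(0,\dot f(\tau))\,\mathrm d\tau$ (the boundary term vanishes because $\mathbf f(0)=0$), giving $\|\dot u(t)\|_V\le\|\dot{\mathbf u}(t)\|_{\mathcal H}\le\int_0^t\|\dot f(\tau)\|_H\,\mathrm d\tau$, and then estimate $\|Au(t)\|_H\le\|\ddot u(t)\|_H+\|f(t)\|_H$ through $Au=\ddot u-f$, bounding $\|f(t)\|_H=\|\int_0^t\dot f\|_H\le\int_0^t\|\dot f(\tau)\|_H\,\mathrm d\tau$. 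I expect the main obstacle to be the generation step---correctly extracting skew-adjointness, and hence the density and closedness of $\mathcal A$, from Proposition \ref{prop:3.1}(c)--(d)---together with the careful justification that $\mathcal A$ may be moved inside the Duhamel integral and that the time integration by parts is valid; once these are secured the estimates are routine.
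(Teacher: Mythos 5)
Your proposal is correct, and it takes a genuinely different route from the paper. The paper offers no self-contained proof of Proposition \ref{prop:3.2}: it defers to \cite[Chapter 6]{SaySUB} and \cite[Appendix A]{DomSay13}, where these results are obtained by elementary time-domain arguments (an explicit construction of the solution by separation of variables in the eigensystem of $-A$, which is what the compactness of $V\subset H$ in hypothesis (a) is there for), and it explicitly remarks that the result ``might be attainable with more abstract tools of functional calculus and semigroup theory'' --- your proof is exactly that alternative. Your execution is sound: the Green identity (c) gives $(\mathcal A\mathbf x,\mathbf x)_{\mathcal H}=0$ on $D(\mathcal A)=D(A)\times V$; the range conditions for $I\pm\mathcal A$ reduce correctly to $(I-A)u=f_1\pm f_2$ and hypothesis (d); maximal dissipativity in a Hilbert space does yield density and closedness of the domain for free; the two data classes $f\in\mathcal C([0,\infty);V)$ and $f\in\mathcal C^1([0,\infty);H)$ are precisely the two standard criteria ($D(\mathcal A)$-valued continuous data, respectively $\mathcal C^1$ data) under which the mild Duhamel solution is classical; and your three estimation devices --- unitarity applied to $\mathbf u$, to $\mathcal A\mathbf u$ (after commuting the closed operator $\mathcal A$ through the integral), and to $\dot{\mathbf u}$ (after integrating by parts in time, which uses $\mathbf f(0)=0$), together with $Au=\ddot u-f$ for the constant $2$ --- reproduce every bound in \eqref{eq:3.56} with the stated constants. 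One sentence worth adding for completeness: uniqueness in the class \eqref{eq:3.55} follows because any such solution makes $(u,\dot u)$ a classical solution of the first-order system, which must coincide with the mild solution. As for what each approach buys: yours is shorter (granting standard semigroup theory) and strictly more general, since it never invokes the compactness in (a), only the continuous embedding (b); the construction the paper leans on is elementary and self-contained, in keeping with the paper's program of carrying out all proofs in the time domain with explicit constants and without functional-calculus or semigroup machinery.
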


\begin{proof}
The proofs for these results can be found in \cite[Chapter 6]{SaySUB} (see also \cite[Appendix A]{DomSay13} for an earlier version). Note that those results might be attainable with more abstract tools of functional calculus and semigroup theory and that no claim is made to their originality.
\end{proof}

\paragraph{The lifting operator.} The non-homogeneous transmission conditions in \eqref{eq:3.2} and \eqref{eq:3.3} have to be lifted in order to make the problems \eqref{eq:3.1}-\eqref{eq:3.2}-\eqref{eq:3.3} analyzable with the tools of equation \eqref{eq:3.5}. 
Liftings of non-homogeneous boundary conditions are done using steady-state problems. We can gather them all in a single transmission problem:
\begin{subequations}\label{eq:3.22}
\begin{alignat}{6}
& c^{-2}  w=\kappa \Delta w \quad \mbox{in $ B_T\setminus\Gamma$}, &\qquad & 
w^\star=\Delta w^\star \quad \mbox{in $B_T\setminus\Gamma$}, \\
& \gamma_T w=0, &\qquad &
\gamma_T w^\star= 0,\\
&\gamma^+ w-\gamma^- w^\star \in X_h^\circ
&\qquad &
\kappa\partial_\nu^+ w-\partial_\nu^- w^\star\in Y_h^\circ,\\
& \jump{\gamma w}+\jump{\gamma w^\star}=\beta_0,
&\qquad&
\kappa\jump{\partial_\nu w} +\jump{\partial_\nu w^\star}=\beta_1,\\
&
\jump{\gamma w}+\phi\in Y_h,
&\qquad&
\jump{\partial_\nu w}+\lambda\in X_h.
\end{alignat}
\end{subequations}
The solution of \eqref{eq:3.22} will be written as $(w,w^\star):=L(\beta_0,\phi,\beta_1,\lambda)$. The boundedness of this operator is stated in the next result. Note that $L(0,0,\beta_1,\lambda)$ takes values in  $V$.

\begin{proposition}\label{prop:3.3}
There exists $C$ independent of $h$ and $T$ such that the solution of \eqref{eq:3.22} can be bounded by
\[
\| (w,w^\star)\|_{1,B_T\setminus\Gamma}+\| (\Delta w,\Delta w^\star)\|_{B_T\setminus\Gamma}\le C
\Big( \|\beta_0\|_{1/2,\Gamma}+\|\phi\|_{1/2,\Gamma}+\| \beta_1\|_{-1/2,\Gamma}+\|\lambda\|_{-1/2,\Gamma}\Big).
\]
\end{proposition}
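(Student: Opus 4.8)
The plan is to realize \eqref{eq:3.22} as a single coercive variational problem and to base every estimate on the \emph{mass-plus-stiffness} form
\[
a((w,w^\star),(z,z^\star)):=((w,w^\star),(z,z^\star))_H+[(w,w^\star),(z,z^\star)],
\]
which is exactly the form used in \eqref{eq:3.3C}. The point is that, unlike the pure gradient form \eqref{eq:3.2d}, $a$ is equivalent to the full $H^1(B_T\setminus\Gamma)^2$ norm with ellipticity and continuity constants $\min\{c^{-2},1,\kappa\}$ and $\max\{c^{-2},1,\kappa\}$ that depend only on the material parameters. This is precisely what will decouple the bound from the growing radius: coercivity is available in the full $H^1$ norm uniformly in $T$, so I never invoke the $T$-dependent Poincar\'e constant $C_T$ of Proposition~\ref{prop:3.1}(b). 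I also note at the outset that the Laplacian estimate is free, since the volume equations in \eqref{eq:3.22} read $\Delta w=m^{-2}w$ and $\Delta w^\star=w^\star$, whence $\|(\Delta w,\Delta w^\star)\|_{B_T\setminus\Gamma}\le\max\{m^{-2},1\}\,\|(w,w^\star)\|_{B_T\setminus\Gamma}$; only the $H^1$ bound has to be worked out.

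First I would lift the trace data $(\beta_0,\phi)$ by an $h$- and $T$-independent construction. The trick is to prescribe the four traces on $\Gamma$ so that the discrete side conditions hold \emph{trivially}: taking $\gamma^- g=-\phi$, $\gamma^+ g=\gamma^- g^\star=0$ and $\gamma^+ g^\star=-\phi-\beta_0$ gives $\gamma^+g-\gamma^-g^\star=0\in X_h^\circ$ and $\jump{\gamma g}+\phi=0\in Y_h$, while $\jump{\gamma g}+\jump{\gamma g^\star}=\beta_0$. Since these are fixed elements of $H^{1/2}(\Gamma)$ controlled by $\|\phi\|_{1/2,\Gamma}+\|\beta_0\|_{1/2,\Gamma}$, a bounded right inverse of the trace operator followed by multiplication with a cutoff supported in a fixed neighbourhood of $\Gamma$ (legitimate because $\Gamma\subset B_0\subset B_T$ for every $T$) produces $(g,g^\star)\in H^1_{\partial B_T}(B_T\setminus\Gamma)^2$ with $\|(g,g^\star)\|_{1,B_T\setminus\Gamma}\le C(\|\beta_0\|_{1/2,\Gamma}+\|\phi\|_{1/2,\Gamma})$ and $C$ independent of both $h$ and $T$.

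Next I would set $(w,w^\star)=(g,g^\star)+(\tilde w,\tilde w^\star)$ with $(\tilde w,\tilde w^\star)\in V$ and define the corrector through
\[
a((\tilde w,\tilde w^\star),(z,z^\star))=F(z,z^\star)-a((g,g^\star),(z,z^\star))\qquad\forall(z,z^\star)\in V,
\]
where the functional carrying the normal-derivative data is
\[
F(z,z^\star):=-\kappa\langle\lambda,\gamma^+ z-\gamma^- z^\star\rangle+\langle\beta_1,\gamma^+ z^\star\rangle.
\]
This $F$ is exactly what one reads off the right-hand side of \eqref{eq:3.3B} once the membership $(z,z^\star)\in V$ is used ($\jump{\gamma z}+\jump{\gamma z^\star}=0$, $\jump{\gamma z}\in Y_h$, $\gamma^+z-\gamma^-z^\star\in X_h^\circ$) together with the sought natural conditions $\jump{\partial_\nu w}+\lambda\in X_h$ and $\kappa\partial_\nu^+ w-\partial_\nu^- w^\star\in Y_h^\circ$: every $X_h/Y_h$ pairing then cancels. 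Crucially $F$ is bounded by $C(\|\lambda\|_{-1/2,\Gamma}+\|\beta_1\|_{-1/2,\Gamma})\|(z,z^\star)\|_{1,B_T\setminus\Gamma}$ with $C$ independent of $h$ and $T$, because it only pairs the data against traces on the fixed interface $\Gamma$. Lax--Milgram applied to the uniformly coercive $a$ then gives $\|(\tilde w,\tilde w^\star)\|_{1,B_T\setminus\Gamma}$ bounded by the full right-hand side, uniqueness comes from coercivity, and the triangle inequality with the lifting bound closes the $H^1$ estimate.

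It remains to check that the variational solution actually satisfies every line of \eqref{eq:3.22}. Testing against $\mathcal D(B_T\setminus\Gamma)^2$ recovers the two volume equations, and then the surjectivity of the trace map displayed at the end of the proof of Proposition~\ref{prop:3.1}(d) lets me extract the three natural conditions one at a time, exactly as there, which a posteriori justifies the cancellations used to write $F$. The step I expect to be the main obstacle is keeping all constants simultaneously free of $h$ and of $T$: the $T$-independence is what forces the mass-plus-stiffness form rather than the gradient form \eqref{eq:3.2d}, and the $h$-independence is what forces the explicit, discretely transparent choice of lifting traces above, so that no inf--sup or projection constant attached to $X_h$ or $Y_h$ ever enters the bounds.
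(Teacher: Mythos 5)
Your proof is correct and follows essentially the same route as the paper's: the same mass-plus-stiffness form $a$, the same explicit lifting with traces $\gamma^- w_0=-\phi$, $\gamma^+ w_0=\gamma^- w_0^\star=0$, $\gamma^+ w_0^\star=-\beta_0-\phi$ supported in a fixed neighbourhood of $\Gamma$ (hence with an $h$- and $T$-independent bound), followed by a coercive variational problem in $V$ for the remainder and the observation that the Laplacian bound comes for free from the volume equations. The only (immaterial) difference is the factor $\kappa$ multiplying the $\lambda$-term in your functional $F$, which is indeed what the identity \eqref{eq:3.3B} produces when restricted to test functions in $V$ --- the paper's \eqref{eq:3.23} omits that factor, but either way it is absorbed into the constant $C$.
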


\begin{proof}
This follows from an easy (but careful) variational argument. Consider the bilinear form
\[
a((u,u^\star),(v,v^\star)):= [(u,u^\star),(v,v^\star)]+((u,u^\star),(v,v^\star))_H.
\]
The solution of \eqref{eq:3.22} is the same as the solution of
\begin{subequations}\label{eq:3.23}
\begin{alignat}{6}
	 & (w,w^\star)\in H^1_{\partial B_T}(B_T\setminus\Gamma)^2,\\
	 & \gamma^+ w-\gamma^- w^\star\in X_h^\circ, \quad \jump{\gamma w}+\phi\in Y_h, 
	\quad \jump{\gamma w}+\jump{\gamma w^\star}=\beta_0,\\
	 & a((w,w^\star),(v,v^\star))=
		\langle \beta_1,\gamma^+ v^\star\rangle_\Gamma-\langle\lambda,\gamma^+v-\gamma^-v^\star\rangle_\Gamma  
		\qquad \forall (v,v^\star)\in V.
\end{alignat}
\end{subequations}
By substracting $(w_0,w^\star_0)$ satisfying
\[
\gamma^+w_0=\gamma^- w_0^\star=0, \qquad \gamma^- w_0=-\phi, \qquad \gamma^+ w_0^\star=-\beta_0-\phi, \qquad w_0=w_0^\star\equiv 0 \mbox{ in $B_T\setminus B_0$},
\]
problem \eqref{eq:3.23} can be transformed into a coercive variational problem in $V$. The $H^1(B_T\setminus\Gamma)$ bound on $(w,w^\star)$ follows from this variational formulation. Finally the bound for the Laplacians follow from the first equation of \eqref{eq:3.22}.
\end{proof}

\paragraph{Additional notation.} In the space $H^1(B_T\setminus\Gamma)$, we consider the usual Sobolev norm 
\[
\| u\|_{1,B_T\setminus\Gamma}^2:=\| u\|_{B_T\setminus\Gamma}^2+\| \nabla u\|_{B_T\setminus\Gamma}^2,
\]
while in $H^1_\Delta(B_T\setminus\Gamma)$, we can consider the seminorm
\[
| u|_{\Delta,B_T\setminus\Gamma}^2:=\| \nabla u\|_{B_T\setminus\Gamma}^2+\| \Delta u\|_{B_T\setminus\Gamma}^2,
\]
which is equal to the $\mathbf H(\mathrm{div},B_T\setminus\Gamma)$ norm of $\nabla u$. For a Hilbert space $X$, we will also consider the spaces
\[
\mathcal C^k_0([0,\infty);X):=\{ f\in \mathcal C^k([0,\infty);X)\,:\, f^{(j)}(0)=0, \quad j\le k-1\}.
\]

\begin{proposition}\label{prop:3.4}
Let $\beta_0\in \mathcal C^3_0([0,\infty);H^{1/2}(\Gamma))$ and $\beta_1\in \mathcal C^2_0([0,\infty);H^{-1/2}(\Gamma))$. Then the solution of \eqref{eq:3.1} and \eqref{eq:3.2} is in the space
\[
\mathcal C^2([0,\infty);L^2(B_T)^2)\cap \mathcal C^1([0,\infty);H^1(B_T\setminus\Gamma)^2)\cap \mathcal C([0,\infty);H^1_\Delta(B_T\setminus\Gamma)^2)
\]
and satisfies for all $t\ge 0$
\begin{eqnarray*}
|u(t)|_{\Delta,B_T\setminus\Gamma}+ |u^\star(t)|_{\Delta,B_T\setminus\Gamma} 
	& \le & C\big( H_3(\beta_0,t; H^{1/2}(\Gamma))+ H_2(\beta_1,t;H^{-1/2}(\Gamma))\big),\\
\| \dot u(t)\|_{1,B_T\setminus\Gamma}+\|\dot u^\star(t)\|_{1,B_T\setminus\Gamma} 
	& \le & C\big( H_3(\beta_0,t; H^{1/2}(\Gamma))+ H_2(\beta_1,t;H^{-1/2}(\Gamma))\big).
\end{eqnarray*}
\end{proposition}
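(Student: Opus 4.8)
The plan is to reduce the inhomogeneous transmission problem \eqref{eq:3.1}--\eqref{eq:3.2} to the abstract second order initial value problem \eqref{eq:3.5} by removing the boundary data with the steady--state lift of Proposition \ref{prop:3.3}, and then to read the bounds off Proposition \ref{prop:3.2}. Concretely, I would introduce the time--dependent lift $(w,w^\star)(t):=L(\beta_0(t),0,\beta_1(t),0)$. Since $L$ is a bounded time--independent operator, $(w,w^\star)$ is as regular in $t$ as the data (hence in $\mathcal C^2$, limited by $\beta_1$), with $\ddot w(t)=L(\ddot\beta_0(t),0,\ddot\beta_1(t),0)$, and $w(0)=\dot w(0)=0$ because $\beta_0,\beta_1$ have vanishing initial conditions up to the relevant order. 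The steady equations in \eqref{eq:3.22} say precisely that $A(w,w^\star)=(w,w^\star)$ in $H$ (reading $A$ as the formal Laplacian application, legitimate since $\Delta w,\Delta w^\star\in L^2$ by Proposition \ref{prop:3.3}).

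Setting $(\tilde u,\tilde u^\star):=(u,u^\star)-(w,w^\star)$, the six transmission conditions in \eqref{eq:3.1}--\eqref{eq:3.2} become homogeneous after subtracting those of \eqref{eq:3.22}, and a direct inspection shows that this places $(\tilde u,\tilde u^\star)$ in $D(A)$ (the conditions $\jump{\gamma\tilde u}+\jump{\gamma\tilde u^\star}=0$ with $\jump{\gamma\tilde u}\in Y_h$ give $\jump{\gamma\tilde u}=-\jump{\gamma\tilde u^\star}\in Y_h$, etc.). Using $\ddot u=c^2\kappa\Delta u$, $\ddot u^\star=\Delta u^\star$ together with $A(w,w^\star)=(w,w^\star)$, one computes that $(\ddot{\tilde u},\ddot{\tilde u}^\star)=A(\tilde u,\tilde u^\star)+f$ with forcing $f:=(w,w^\star)-(\ddot w,\ddot w^\star)=L(\beta_0-\ddot\beta_0,0,\beta_1-\ddot\beta_1,0)$ and zero initial data. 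Thus $(\tilde u,\tilde u^\star)$ solves \eqref{eq:3.5}, and Proposition \ref{prop:3.2} yields both the regularity $\mathcal C^2(H)\cap\mathcal C^1(V)\cap\mathcal C(D(A))$ (which embeds into the claimed spaces) and the four stability bounds.

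The crux, and the step where the asymmetric hypotheses ($\beta_0\in\mathcal C^3_0$ but only $\beta_1\in\mathcal C^2_0$) must be handled with care, is the choice of which bound in Proposition \ref{prop:3.2} to use. A single uniform estimate via $\int_0^t\|\dot f\|_H$ would force $\dddot\beta_1$, which is not available. Instead I would split the forcing as $f=f_D+f_N$ with $f_D:=L(\beta_0-\ddot\beta_0,0,0,0)$ and $f_N:=L(0,0,\beta_1-\ddot\beta_1,0)$, and correspondingly $\tilde u=\tilde u_D+\tilde u_N$ by linearity. By the remark after Proposition \ref{prop:3.3}, $f_N$ takes values in $V$, so the first set of bounds in \eqref{eq:3.56} applies to $\tilde u_N$, giving $\|\dot{\tilde u}_N(t)\|_V$ and $\|A\tilde u_N(t)\|_H$ controlled by $\int_0^t\|f_N\|_V\le C\int_0^t\|\beta_1-\ddot\beta_1\|_{-1/2,\Gamma}\le C\,H_2(\beta_1,t;H^{-1/2}(\Gamma))$, requiring only $\ddot\beta_1$. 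On the other hand $f_D\notin V$, but $f_D(0)=0$ and $f_D\in\mathcal C^1(H)$ (here $\dddot\beta_0$ enters, supplied by $\beta_0\in\mathcal C^3_0$), so the second set of bounds gives $\|\dot{\tilde u}_D(t)\|_V$ and $\|A\tilde u_D(t)\|_H$ controlled by $\int_0^t\|\dot f_D\|_H\le C\int_0^t\|\dot\beta_0-\dddot\beta_0\|_{1/2,\Gamma}\le C\,H_3(\beta_0,t;H^{1/2}(\Gamma))$. The remaining quantities $\|\tilde u\|_V$ and $\|\dot{\tilde u}\|_H$ use only $\int_0^t\|f\|_H$, which is dominated by $H_2(\beta_0)+H_2(\beta_1)$.

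Finally I would reassemble $u=\tilde u+w$, noting $|u|_{\Delta,B_T\setminus\Gamma}\le|\tilde u|_{\Delta,B_T\setminus\Gamma}+|w|_{\Delta,B_T\setminus\Gamma}$ and $\|\dot u\|_{1,B_T\setminus\Gamma}\le\|\dot{\tilde u}\|_{1,B_T\setminus\Gamma}+\|\dot w\|_{1,B_T\setminus\Gamma}$, and identify $|\tilde u|_\Delta$ with $\|\tilde u\|_V$ plus $\|A\tilde u\|_H$ and $\|\dot{\tilde u}\|_1$ with $\|\dot{\tilde u}\|_V$ plus $\|\dot{\tilde u}\|_H$, all already bounded above. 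The lift contributions come from Proposition \ref{prop:3.3} as \emph{pointwise}--in--time bounds, e.g. $|w(t)|_{\Delta,B_T\setminus\Gamma}\le C(\|\beta_0(t)\|_{1/2,\Gamma}+\|\beta_1(t)\|_{-1/2,\Gamma})$ and $\|\dot w(t)\|_{1,B_T\setminus\Gamma}\le C(\|\dot\beta_0(t)\|_{1/2,\Gamma}+\|\dot\beta_1(t)\|_{-1/2,\Gamma})$; these are converted into the cumulative seminorms by the fundamental theorem of calculus and the vanishing initial conditions, since $\|\beta_0(t)\|_{1/2,\Gamma}\le\int_0^t\|\dot\beta_0\|_{1/2,\Gamma}\le H_3(\beta_0,t;H^{1/2}(\Gamma))$ and likewise $\|\dot\beta_1(t)\|_{-1/2,\Gamma}\le\int_0^t\|\ddot\beta_1\|_{-1/2,\Gamma}\le H_2(\beta_1,t;H^{-1/2}(\Gamma))$. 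Collecting the estimates gives the two stated bounds. I expect the Dirichlet/Neumann split, dictated precisely by which lift lands in $V$ and by the mismatched differentiability orders of $\beta_0$ and $\beta_1$, to be the only genuinely delicate point; everything else is bookkeeping with the triangle inequality and Propositions \ref{prop:3.2} and \ref{prop:3.3}.
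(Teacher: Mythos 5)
Your proposal is correct and follows essentially the same route as the paper: your combined lift $w=L(\beta_0,0,\beta_1,0)$ with forcing split $f=f_D+f_N$ is, by linearity of $L$, identical to the paper's decomposition $\underline u=\underline u_0+\underline u_1+\underline w_0+\underline w_1$ with $\underline f_0=L(\beta_0-\ddot\beta_0,0,0,0)\in\mathcal C^1_0([0,\infty);H)$ and $\underline f_1=L(0,0,\beta_1-\ddot\beta_1,0)\in\mathcal C([0,\infty);V)$. In particular, your key observation --- that the Dirichlet part must be treated with the $\mathcal C^1(H)$ estimates of Proposition \ref{prop:3.2} (consuming $\dddot\beta_0$) while the Neumann part uses the $\mathcal C(V)$ estimates (needing only $\ddot\beta_1$) --- is exactly the mechanism behind the paper's proof, which you have in fact spelled out in more detail than the paper does.
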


\begin{proof}
The key is the decomposition of the solution in the form
\[
\underline u=(u,u^\star) =\underline u_0+\underline u_1+\underline w_0+\underline w_1,
\]
where 
\[
\underline u_0:=L(\beta_0,0,0,0)\in \mathcal C^3_0([0,\infty);H), \qquad \underline u_1:=L(0,0,\beta_1,0)\in \mathcal C^2_0([0,\infty);V)
\]
(see Proposition \ref{prop:3.3}), and
\begin{subequations}
\begin{alignat}{6}
\label{eq:4.1a}
\underline{\ddot w}_0= A\underline w_0+\underline f_0, & \qquad & \underline w_0(0)=\underline{\dot w}_0(0)=0, 
	&\qquad  &  \underline f_0:=\underline u_0-\underline{\ddot u}_0=L(\beta_0-\ddot\beta_0,0,0,0),\\
\label{eq:4.1b}
\underline{\ddot w_1}= A\underline w_1+\underline f_1, & \qquad & \underline w_1(0)=\underline{\dot w_1}(0)=0, 
	&\qquad  &  \underline f_1:=\underline u_1-\underline{\ddot u}_1=L(0,0,\beta_1-\ddot\beta_1,0).
\end{alignat} 
\end{subequations}
It is clear that $\underline f_0\in \mathcal C^1_0([0,\infty);H)$ and $\underline f_1\in \mathcal C([0,\infty);V)$. The solution of \eqref{eq:4.1a} and \eqref{eq:4.1b} fits as a particular instance of Proposition \ref{prop:3.2}. The bounds follow from Proposition \ref{prop:3.3} and from the inequalities \eqref{eq:3.56} in Proposition \ref{prop:3.2}. (Note that Proposition \ref{prop:3.3} is used to bound different norms of $\underline f_j$ in terms of norms of  $\beta_j$.)
\end{proof}

\begin{proposition}
Let $\phi\in \mathcal C^3_0([0,\infty);H^{1/2}(\Gamma))$ and $\lambda\in \mathcal C^2_0([0,\infty);H^{-1/2}(\Gamma))$. Then the solution of \eqref{eq:3.1} and \eqref{eq:3.3} is in the space
\[
\mathcal C^2([0,\infty);L^2(B_T)^2)\cap \mathcal C^1([0,\infty);H^1(B_T\setminus\Gamma)^2)\cap \mathcal C([0,\infty);H^1_\Delta(B_T\setminus\Gamma)^2)
\]
and satisfies for all $t\ge 0$
\begin{eqnarray*}
|u(t)|_{\Delta,B_T\setminus\Gamma}+ |u^\star(t)|_{\Delta,B_T\setminus\Gamma} 
	& \le & C\big( H_3(\phi,t; H^{1/2}(\Gamma))+ H_2(\lambda,t;H^{-1/2}(\Gamma))\big),\\
\| \dot u(t)\|_{1,B_T\setminus\Gamma}+\|\dot u^\star(t)\|_{1,B_T\setminus\Gamma} 
	& \le & C\big( H_3(\phi,t; H^{1/2}(\Gamma))+ H_2(\lambda,t;H^{-1/2}(\Gamma))\big).
\end{eqnarray*}
\end{proposition}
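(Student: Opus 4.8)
The plan is to follow the proof of Proposition~\ref{prop:3.4} line by line, with $\phi$ and $\lambda$ now playing the roles previously played by $\beta_0$ and $\beta_1$. The only structural difference between \eqref{eq:3.2} and \eqref{eq:3.3} is which transmission conditions carry the non-homogeneous data: in \eqref{eq:3.3} the data sit in the \emph{incomplete} conditions $\jump{\gamma u}+\phi\in Y_h$ and $\jump{\partial_\nu u}+\lambda\in X_h$, whereas the two summed conditions are now homogeneous. Since the lifting operator $L(\beta_0,\phi,\beta_1,\lambda)$ of \eqref{eq:3.22} already accommodates all four data slots, only the slot assignment changes and the same additive decomposition applies.

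Concretely, I would write the solution as
\[
\underline u=(u,u^\star)=\underline u_0+\underline u_1+\underline w_0+\underline w_1,
\]
with liftings
\[
\underline u_0:=L(0,\phi,0,0)\in\mathcal C^3_0([0,\infty);H),\qquad
\underline u_1:=L(0,0,0,\lambda)\in\mathcal C^2_0([0,\infty);V).
\]
The regularity asymmetry is the crux and exactly mirrors that of Proposition~\ref{prop:3.4}. Because $L(0,0,\beta_1,\lambda)$ takes values in $V$ (as noted after \eqref{eq:3.22}), the lifting $\underline u_1$ of the $\lambda$-datum lands in $V$; by contrast $\underline u_0$, whose datum $\phi$ enters through $\jump{\gamma w}+\phi\in Y_h$, only lands in $H$, since it fails the membership $\jump{\gamma w}\in Y_h$ that defines $V$ unless $\phi\in Y_h$. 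This is precisely what dictates the hypotheses $\phi\in\mathcal C^3_0$ and $\lambda\in\mathcal C^2_0$, and which branch of Proposition~\ref{prop:3.2} is invoked for each $\underline w_j$.

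I would then obtain $\underline w_0,\underline w_1$ from Proposition~\ref{prop:3.2} as the solutions of
\[
\underline{\ddot w}_j=A\underline w_j+\underline f_j,\qquad \underline w_j(0)=\underline{\dot w}_j(0)=0,
\]
with forcings produced by the linearity of $L$ in time,
\[
\underline f_0:=\underline u_0-\underline{\ddot u}_0=L(0,\phi-\ddot\phi,0,0)\in\mathcal C^1_0([0,\infty);H),\qquad
\underline f_1:=\underline u_1-\underline{\ddot u}_1=L(0,0,0,\lambda-\ddot\lambda)\in\mathcal C([0,\infty);V).
\]
To verify that $\underline u$ solves \eqref{eq:3.1}--\eqref{eq:3.3} I would argue as before: each lifting solves the steady-state equations of \eqref{eq:3.22}, so the formal differential operator $(v,v^\star)\mapsto(c^2\kappa\Delta v,\Delta v^\star)$ reproduces $\underline u_j$, while $A\underline w_j$ coincides with that operator on $D(A)$; summing the four relations and using the definition of $\underline f_j$ yields $c^{-2}\ddot u=\kappa\Delta u$ and $\ddot u^\star=\Delta u^\star$ (that $\underline u_j$ need not lie in $D(A)$ is immaterial, as they solve the elliptic problem exactly). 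The homogeneous summed conditions and the polar-set conditions $\gamma^+ u-\gamma^- u^\star\in X_h^\circ$, $\kappa\partial_\nu^+ u-\partial_\nu^- u^\star\in Y_h^\circ$ are inherited from $V$ and $D(A)$ for the $\underline w_j$ and from the vanishing $\beta_0,\beta_1$ slots for the liftings, while the two incomplete conditions hold because $\phi$ and $\lambda$ are carried respectively by $\underline u_0$ and $\underline u_1$, all remaining summands having jumps in $Y_h$ and $X_h$.

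Finally, the quantitative bounds follow verbatim from the two earlier propositions: Proposition~\ref{prop:3.3} controls $\underline f_0$ in $H$ and $\underline f_1$ in $V$ by $\|\phi-\ddot\phi\|_{1/2,\Gamma}$ and $\|\lambda-\ddot\lambda\|_{-1/2,\Gamma}$, and the inequalities \eqref{eq:3.56}---the $H$-based branch for $\underline w_0$ and the $V$-based branch for $\underline w_1$---convert these into the cumulative-seminorm bounds $H_3(\phi,t;H^{1/2}(\Gamma))$ and $H_2(\lambda,t;H^{-1/2}(\Gamma))$ on $|u|_\Delta$, $|u^\star|_\Delta$ and on $\|\dot u\|_1$, $\|\dot u^\star\|_1$. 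The only genuine obstacle is the careful bookkeeping of the eight transmission conditions under the decomposition; once the slot assignment $L(0,\phi,0,0)$, $L(0,0,0,\lambda)$ is fixed and the $V$-versus-$H$ dichotomy is recognized, the result reduces entirely to the machinery already established.
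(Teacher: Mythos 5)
Your proposal is correct and coincides with the paper's own argument: the paper proves this result exactly by summing the liftings $L(0,\phi,0,0)$ and $L(0,0,0,\lambda)$ and solving two non-homogeneous second-order equations for the operator $A$, which is precisely your decomposition $\underline u=\underline u_0+\underline u_1+\underline w_0+\underline w_1$ with forcings $L(0,\phi-\ddot\phi,0,0)$ and $L(0,0,0,\lambda-\ddot\lambda)$. Your identification of the $H$-versus-$V$ dichotomy (the $\phi$-lifting lands only in $H$, the $\lambda$-lifting lands in $V$), which drives the choice of branch in Proposition \ref{prop:3.2} and explains the asymmetric regularity hypotheses, is exactly the mechanism underlying the paper's reference to the proof of Proposition \ref{prop:3.4}.
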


\begin{proof}
The proof is very similar to that of Proposition \ref{prop:3.4}, using the sum of the liftings $L(0,\phi,0,0)$ and  $L(0,0,0,\lambda)$, plus the solution of two non-homogeneous differential equations of the second order associated to the operator $A$. Details are omitted.
\end{proof}

\section{Long term analysis in free space}\label{sec:5}

\paragraph{Causality arguments.} Assume that $f$ is a causal $\mathcal L(X,Y)$-valued distribution whose Laplace transform satisfies
\begin{equation}\label{eq:4.1}
\| \mathrm F(s)\| \le  e^{-M \, \mathrm{Re}\,s} C_{\mathrm F}(\mathrm{Re}\,s) | s|^\mu\qquad \forall s \in \mathbb C, \quad \mathrm{Re}\, s>0, 
\end{equation}
where $\mu \in \mathbb R$ and $C_{\mathrm F}:(0,\infty)\to (0,\infty)$ is a non-increasing function that can be bounded by a rational function close to zero. Then, if $g$ is an $X$-valued causal Laplace transformable distribution, $f*g$ is a $Y$-valued distribution whose support is contained in $[M,\infty)$. This is proved in \cite[Proposition 3.6.1]{SaySUB}.

\paragraph{Extension operators.} Consider a function $f:[0,\infty)\to X$ such that $\| f(t)\|_X$ is polynomially bounded in $t$. We can then define 
\[
(Ef)(t):=\left\{ \begin{array}{ll} 
f(t), & t\ge 0,\\
0, & t< 0,
\end{array}\right.
\]
which can be considered as a causal $X$-valued distribution. If $u:[0,\infty)\to L^2(B_T)$, we define
\[
\underline u(t):=\left\{ \begin{array}{ll}
u(t), & \mbox{in $B_T$},\\
0,     & \mbox{in $\mathbb R^d\setminus B_T$}.
\end{array}\right.
\]
We consider the quantity
\[
\delta:=\frac{\mathrm{dist}(\Gamma,\partial B_0)}{\max \{1,m\}},
\]
which measures the waiting time for a wave starting at $\Gamma$ and with speed equal to the maximum of the speeds of waves in \eqref{eq:3.1}, to reach the boundary of the first cut-off ball $B_0$.

\begin{proposition}\label{prop:5.1}
Let
\begin{subequations}
\begin{equation}
u\in \mathcal C^2([0,\infty);L^2(B_T))\cap \mathcal C^1([0,\infty);H^1(B_T\setminus\Gamma)) \cap \mathcal C([0,\infty);H^1_\Delta(B_T\setminus\Gamma))
\end{equation}
be polynomially bounded in time and satisfy
\begin{alignat}{6}
\ddot u(t)=\Delta u(t) &\qquad & t\ge 0,\\
\gamma_T u(t) = 0 && t\ge 0,\\
u(0)=\dot u(0)=0.
\end{alignat}
\end{subequations}
Then
\begin{equation}\label{eq:4.3}
\underline u(t)=\mathcal S * E\jump{\partial_\nu u} (t)-\mathcal D* E\jump{\gamma u}(t) \qquad \forall t\in [0,T+\delta]
\end{equation}
and
\begin{equation}
\underline u(t)\in \mathcal C([0,T+\delta];H^1_\Delta(\mathbb R^d\setminus\Gamma)).
\end{equation}
The same result holds if waves propagate at speed $m$ instead of unit speed.
\end{proposition}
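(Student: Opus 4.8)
The plan is to reduce the identity \eqref{eq:4.3} to a uniqueness statement for the free-space wave equation, the only genuine analytic input being finite speed of propagation. First I would turn the boundary data of $u$ into a globally causal object: set $\varphi:=E\jump{\gamma u}$ and $\eta:=E\jump{\partial_\nu u}$, which are causal, Laplace transformable distributions with values in $H^{1/2}(\Gamma)$ and $H^{-1/2}(\Gamma)$ (the traces are continuous and polynomially bounded in $t$ by the assumed regularity of $u$). I then define
\[
w:=\mathcal S*\eta-\mathcal D*\varphi,
\]
which, by the theory behind \eqref{eq:2.3}--\eqref{eq:2.4}, is the unique causal $H^1_\Delta(\mathbb R^d\setminus\Gamma)$-valued solution of the free-space wave equation with $\jump{\gamma w}=\varphi$ and $\jump{\partial_\nu w}=\eta$; its mapping properties already give $w\in\mathcal C([0,\infty);H^1_\Delta(\mathbb R^d\setminus\Gamma))$. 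Since $E\jump{\gamma u}$ and $E\jump{\partial_\nu u}$ agree with the actual jumps of $u$ for $t\ge 0$, everything reduces to proving $\underline u(t)=w(t)$ for $t\in[0,T+\delta]$.

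Next I would examine the difference $r:=\underline u-w$ as a causal space-time distribution and compute $\ddot r-\Delta r$ in $\mathbb R^d$. Applying the distributional wave operator to $\underline u$ produces layer terms on the two interfaces across which $\underline u$ jumps, namely $\Gamma$ and the cut-off sphere $\partial B_T$; applying it to $w$ produces layer terms on $\Gamma$ only. Because $\jump{\gamma\underline u}=\jump{\gamma u}=\varphi=\jump{\gamma w}$ and $\jump{\partial_\nu\underline u}=\jump{\partial_\nu u}=\eta=\jump{\partial_\nu w}$, the contributions on $\Gamma$ cancel, so that $r$ has vanishing jumps across $\Gamma$ and
\[
\ddot r-\Delta r=g\qquad\mbox{(in $\mathbb R^d$)},
\]
where $g$ is the single-layer distribution carried by $\partial B_T$ with density $\partial_T^\nu u$ (the zero-extension has vanishing Dirichlet trace there, by $\gamma_T u=0$, but a normal-derivative jump). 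Thus the only source driving the causal $r$ sits on $\partial B_T$, and $r$ has vanishing initial data.

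The crux is finite speed of propagation, which I would use to switch off $g$ for small times. The data driving $u$ live on $\Gamma$, the initial conditions are zero, and the propagation speed is $1$ (resp.\ $m$), so a backward-light-cone energy estimate gives $u(t)\equiv 0$ in a neighborhood of $\partial B_T$ as long as $t$ is smaller than the travel time from $\Gamma$ to $\partial B_T$. Since
\[
\mathrm{dist}(\Gamma,\partial B_T)=\mathrm{dist}(\Gamma,\partial B_0)+\max\{1,m\}\,T=\max\{1,m\}\,(T+\delta)\ge T+\delta,
\]
we obtain $\partial_T^\nu u(t)=0$, hence $g(t)=0$, for all $t<T+\delta$; the same fact shows that for these times $\underline u(t)$ carries no layer on $\partial B_T$ and therefore genuinely belongs to $H^1_\Delta(\mathbb R^d\setminus\Gamma)$, which is exactly the second assertion of the proposition. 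The remaining step is short: on $[0,T+\delta)$ the distribution $r$ solves $\ddot r=\Delta r$ in all of $\mathbb R^d$ with no interface source, zero $g$, and zero initial data, so the standard energy identity forces $r\equiv 0$ there, and continuity in time extends this to $t=T+\delta$. This yields \eqref{eq:4.3}. The speed-$m$ variant is identical after replacing $\mathcal S,\mathcal D$ by $\mathcal S_m,\mathcal D_m$: the travel time becomes $\max\{1,m\}(T+\delta)/m\ge T+\delta$, so the same window is available.

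I expect the one genuinely delicate point to be the rigorous form of finite speed of propagation up to the artificial boundary, i.e.\ proving that the homogeneous condition $\gamma_T u=0$ has not yet influenced the solution, equivalently that both $u$ and $\partial_T^\nu u$ vanish on $\partial B_T$ for $t\le T+\delta$. The energy estimate is standard, but one must be careful that the backward light cone issuing from a point of $\partial B_T$ may leave $B_T$; this is handled by applying the estimate at interior points and passing to the limit, using that such a cone has radius strictly less than $\mathrm{dist}(\,\cdot\,,\Gamma)$ and hence never meets the interface where $u$ is irregular.
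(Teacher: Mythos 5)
Your proposal has the same skeleton as the paper's proof: extend the jumps causally, form the potential $w=\mathcal S*E\jump{\partial_\nu u}-\mathcal D*E\jump{\gamma u}$, observe that the difference $r=E\underline u-w$ is driven solely by a single-layer source on $\partial B_T$ with density $E\partial_T^\nu u$ (this is precisely the paper's distributional Kirchhoff formula, with the double layer on $\partial B_T$ absent because $\gamma_T u=0$), and reduce everything to showing that $\partial_T^\nu u$ vanishes on $[0,T+\delta]$, which is \eqref{eq:4.8} in the paper. Where you genuinely diverge is in how that vanishing is proved. The paper never performs an energy estimate near the artificial boundary: it splits $Eu=w_p+w$, shows through Laplace-domain bounds of the form \eqref{eq:4.1} with $M=T+\delta$ that $\gamma_T w_p$ and $\partial_T^\nu w_p$ are supported in $[T+\delta,\infty)$, and then uses that $w$ solves the interior problem \eqref{eq:4.7}, whose solution map $\xi\mapsto\partial_T^\nu w$ is a \emph{causal} convolution operator; causality transfers the dormancy of $\xi$ to $\partial_T^\nu w$ and hence to $\partial_T^\nu u$. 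Your route --- a direct time-domain domain-of-dependence argument --- is more elementary and avoids kernel estimates in the Laplace domain, which is a genuine merit, but it is exactly at this point that your write-up has a gap.

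The delicate point you flag is real, but the fix you propose does not address it. The obstruction is not that backward cones might meet $\Gamma$ (your geometric computation correctly rules that out); it is that a backward cone issued from a point at small distance from $\partial B_T$, at a time comparable to $T+\delta$, necessarily crosses $\partial B_T$, no matter how you choose interior points and pass to limits. Outside $B_T$ the function $\underline u$ is defined by extension by zero and has an unknown normal-derivative jump across $\partial B_T$ --- precisely the quantity you are trying to prove is zero --- so invoking the free-space domain-of-dependence theorem for $\underline u$ on such a cone is circular, while invoking it for $u$ is impossible because the equation holds only in $B_T$. The standard repair is different from what you wrote: run the energy-flux identity on the cone \emph{truncated by} $B_T$, and use the homogeneous Dirichlet condition $\gamma_T u(t)=0$ for all $t$ (hence $\gamma_T\dot u=0$) to conclude that the flux through the $\partial B_T$ portion of the boundary vanishes; a reflecting boundary cannot create signal before signal reaches it. With that, the energy of the cone sections is bounded by the (zero) initial energy, giving $u\equiv 0$ in a neighborhood of $\partial B_T$ and $\partial_T^\nu u(t)=0$ for $t\le T+\delta$, after which your concluding causality step closes the argument as you describe. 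Two further points deserve explicit care: the cone identity must be justified at the stated regularity $\mathcal C^2(L^2)\cap\mathcal C^1(H^1)\cap\mathcal C^0(H^1_\Delta)$ (mollification in time, or smooth multipliers approximating the cone), and your passing remark that the mapping properties of $\mathcal S,\mathcal D$ ``already give'' $w\in\mathcal C([0,\infty);H^1_\Delta(\mathbb R^d\setminus\Gamma))$ is not free of charge --- continuity in time of the potentials requires integrable time derivatives of the densities; the cleaner route to the second assertion of the proposition is the one you also give, namely that $\underline u$ carries no layer on $\partial B_T$ for $t\le T+\delta$.
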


\begin{proof}
Let
\begin{eqnarray*}
w_p &:=& \big( \mathcal S*E\jump{\partial_\nu u}-\mathcal D* E\jump{\gamma u}\big)|_{B_T},\\
w    &:=& Eu-w_p,\\
\xi   &:=& \gamma_T w=-\gamma_T w_p,\\
\mu &:=& \partial_T^\nu w= E\partial_T^\nu u-\partial_T^\nu w_p,
\end{eqnarray*}
where $\partial_T^\nu$ is used to denote the interior normal derivative on $\partial B_T$. If $\mathrm S:=\mathcal L\{ \mathcal S\}$ and $\mathrm D:=\mathcal L\{ \mathcal D\}$, then $\gamma_T \mathrm S$ and $\gamma_T \mathrm D$ can be shown to satisfy \eqref{eq:4.1} with $M=T+\delta$. This can be done using a direct argument on the kernels of these operators, and proves (see above) that
\begin{equation}\label{eq:4.5}
\mathrm{supp}\,\xi \subset [T+\delta,\infty).
\end{equation}
Similarly
\begin{equation}\label{eq:4.6}
\mathrm{supp}\,\partial_T^\nu w_p\subset [T+\delta,\infty).
\end{equation}
The distribution $w$ can then be shown to be a causal solution of
\begin{subequations}\label{eq:4.7}
\begin{alignat}{6}
\ddot w = \Delta w & \qquad & &\mbox{(in $L^2(B_T\setminus\Gamma)$)},\\
\jump{\gamma w} = 0 & &&\mbox{(in $H^{1/2}(\Gamma)$)},\\
\jump{\partial_\nu w} = 0 && &\mbox{(in $H^{-1/2}(\Gamma)$)},\\
\gamma_T w=\xi & & &\mbox{(in $H^{1/2}(\partial B_T)$)}.
\end{alignat}
\end{subequations}
However, \eqref{eq:4.7} can be considered as a uniquely solvable distributional evolution equation (this is done using Laplace transforms and elementary arguments) and therefore $\xi\mapsto \partial_T^\nu w$ can be shown to be a causal convolution operator. This and \eqref{eq:4.5} imply that $\mathrm{supp}\, \mu \subset [T+\delta,\infty)$, and then \eqref{eq:4.6} implies that $\mathrm{supp}\, \partial_T^\nu E u \subset [T+\delta,\infty)$. Therefore
\begin{equation}\label{eq:4.8}
\partial_T^\nu u(t)=0\qquad \forall t\in [0,T+\delta].
\end{equation}
Consider now the spatial extension $\underline u(t)$. Since the trace and normal derivative vanish on both sides of $\partial B_T$ up to $t=T+\delta$, it follows that
\[
\underline u(t)\in H^1_\Delta(\mathbb R^d\setminus\Gamma) \qquad t\in [0,T+\delta],
\]  
and in that time interval, the Laplacian operators in $\mathbb R^d\setminus\Gamma$ and $\mathbb R^d\setminus(\Gamma \cup\partial B_T)$ applied to $\underline u(t)$ give the same result. In the entire time interval
\[
\underline u\in \mathcal C^2([0,\infty);L^2(\mathbb R^d))\cap \mathcal C^1([0,\infty);H^1(\mathbb R^d))\cap \mathcal C([0,\infty);H^1_\Delta(\mathbb R^d\setminus(\Gamma\cup\partial B_T)))
\]
and
\[
\ddot{\underline u}(t)=\Delta \underline u(t), \qquad \underline u(0)=\underline{\dot u}(0)=0,
\]
which makes $E\underline u$ a causal solution of
\[
\frac{\mathrm d^2}{\mathrm dt^2}(E\underline u)=\Delta E\underline u \qquad \mbox{(in $H^1(\mathbb R^d\setminus(\Gamma\cup\partial B_T)$)}.
\]
Using the distributional Kirchhoff formula, we can write
\[
E\underline u=\mathcal S*E\jump{\partial_\nu u}-\mathcal D*E\jump{\gamma u}-\mathcal S_{\partial B_T} * E\partial_T^\nu u,
\]
where $\mathcal S_{\partial B_T}$ is a single layer potential emanating from $\partial B_T$. Finally, by \eqref{eq:4.8}, it follows that
\[
\mathrm{supp}\, (E\underline u-\mathcal S*E\jump{\partial_\nu u}+\mathcal D*E\jump{\gamma u})\subset [T+\delta,\infty),
\]
which proves \eqref{eq:4.3}.
\end{proof}

\paragraph{Recovery of densities.} So far the solution of \eqref{eq:3.1} and \eqref{eq:3.2}, or of \eqref{eq:3.1} and \eqref{eq:3.3} has been denoted without explicit reference to $T$. We now have to show that these solutions coincide on finite time intervals: they start being different once the outgoing wave hits the boundary of the cut-off domain. This will be the final step in reconciling the solutions of the cut-off problems with the distributional solutions of the problems in Section \ref{sec:2B}.

\begin{proposition}\label{prop:5.2}
Let $(u_T,u^\star_T)$ be the solution of \eqref{eq:3.1}-\eqref{eq:3.2}. Let then
\[
\psi(T):=\jump{\gamma u_T}(T), \quad \psi^\star(T):=\jump{\gamma u^\star_T}(T), \quad 
\mu(T):=\jump{\partial_\nu u_T}(T),\quad \mu^\star(T):=\jump{\partial_\nu u_T^\star}(T).
\]
Then for all $t\in [0,T+\delta]$
\[
\jump{\gamma u_T}(t)=\psi(t), \quad 
\jump{\gamma u_T^\star}(t)=\psi^\star(t),\quad
\jump{\partial_\nu u_T}(t)=\mu(t),\quad
\jump{\partial_\nu u_T^\star}(t)=\mu^\star(t).
\]
The same result holds for the solution of \eqref{eq:3.1} and \eqref{eq:3.3}.
\end{proposition}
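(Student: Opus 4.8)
The plan is to prove the stronger statement that all four boundary densities, restricted to $[0,T+\delta]$, are independent of the cut-off parameter $T$; the claimed identities then follow by specializing to $T=t$. The key tool is Proposition \ref{prop:5.1}, which lets us trade the artificial boundary $\partial B_T$ for the free-space layer potentials on the whole relevant time window, so that the densities can be shown to solve a $T$-independent boundary integral system.

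First I would apply Proposition \ref{prop:5.1} separately to $u_T$ and $u_T^\star$. The field $u_T^\star$ is a unit-speed wave with $\gamma_T u_T^\star=0$, and $u_T$ is a speed-$m$ wave with $\gamma_T u_T=0$ (since $\ddot u_T=c^2\kappa\Delta u_T=m^2\Delta u_T$ with $m=c\sqrt\kappa$); both therefore fall under Proposition \ref{prop:5.1}, the latter through its speed-$m$ version. Writing $\lambda_T:=\jump{\partial_\nu u_T}$, $\phi_T:=\jump{\gamma u_T}$, $\lambda_T^\star:=\jump{\partial_\nu u_T^\star}$, $\phi_T^\star:=\jump{\gamma u_T^\star}$, this gives, for all $t\in[0,T+\delta]$,
\[
\underline{u_T}(t)=\mathcal S_m*E\lambda_T(t)-\mathcal D_m*E\phi_T(t),\qquad
\underline{u_T^\star}(t)=\mathcal S*E\lambda_T^\star(t)-\mathcal D*E\phi_T^\star(t).
\]
Because $\delta$ is defined with the fastest speed $\max\{1,m\}$ and $B_T$ grows at the same rate, both representations are valid on the single common interval $[0,T+\delta]$.

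Next I would take the four traces $\gamma^\pm,\partial_\nu^\pm$ of these two representations, use the standard jump relations together with the operator definitions \eqref{eq:2.5}, and substitute the result into the transmission and Galerkin conditions of \eqref{eq:3.1}--\eqref{eq:3.2}. The conditions $\phi_T+\phi_T^\star=\beta_0$ and $\kappa\lambda_T+\lambda_T^\star=\beta_1$ eliminate the starred densities, while the memberships $\phi_T\in Y_h$, $\lambda_T\in X_h$ and the polar-set conditions $\gamma^+u_T-\gamma^-u_T^\star\in X_h^\circ$, $\kappa\partial_\nu^+u_T-\partial_\nu^-u_T^\star\in Y_h^\circ$ reproduce exactly the semidiscrete integral system \eqref{eq:2.9}. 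This is precisely the computation underlying Proposition \ref{prop:2.2}, carried out here only for $t\in[0,T+\delta]$ since that is the range where the potential representations hold. Hence $(\lambda_T,\phi_T)$ is a causal $(X_h\times Y_h)$-valued pair solving \eqref{eq:2.9} on $[0,T+\delta]$.

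I would close the argument by causal uniqueness. Let $(\lambda,\phi)$ be the unique causal solution of \eqref{eq:2.9} granted by Proposition \ref{prop:2.2}. Since the Galerkin solver is a causal convolution operator, its action on $[0,T+\delta]$ depends only on data restricted to $[0,T+\delta]$; consequently the causal pair $(\lambda_T,\phi_T)$, which satisfies the same equations with the same data on that window, must coincide there with $(\lambda,\phi)$, independently of $T$, and the starred densities are then fixed through $\phi_T^\star=\beta_0-\phi$, $\lambda_T^\star=\beta_1-\kappa\lambda$. Specializing to $T=t$ (legitimate as $t\in[0,t+\delta]$) yields $\psi=\phi$, $\mu=\lambda$, $\psi^\star=\beta_0-\phi$, $\mu^\star=\beta_1-\kappa\lambda$, whence $\jump{\gamma u_T}(t)=\phi(t)=\psi(t)$ for all $t\in[0,T+\delta]$, and likewise for the other three densities. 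The case of \eqref{eq:3.1} with \eqref{eq:3.3} is identical after replacing \eqref{eq:2.9} by \eqref{eq:2.13a} (equivalently problem \eqref{eq:2.14}), which is again uniquely causally solvable. The delicate step is this last one: one must know that the solution map of \eqref{eq:2.9} is a genuine causal convolution, so that agreement of data on a finite window forces agreement of the solution on that window, thereby reconciling the finite validity interval of Proposition \ref{prop:5.1} with the time-global convolutions appearing in \eqref{eq:2.9}.
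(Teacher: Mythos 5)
Your proposal is correct in substance, but it follows a genuinely different route from the paper. The paper never passes through the integral system: it extends $(u_T,u_T^\star)$ by zero to the larger ball $B_{T+M}$, uses the trace-vanishing part of the argument in Proposition \ref{prop:5.1} (the wave has not reached $\partial B_T$ before $t=T+\delta$, so $\gamma_T$ and $\partial_T^\nu$ of the solution vanish there) to check that these extensions solve the cut-off problem \eqref{eq:3.1}--\eqref{eq:3.2} posed in $B_{T+M}$ on $[0,T+\delta]$, and then invokes a simple energy uniqueness argument for that evolution problem to conclude the nesting property $u_{T+M}|_{B_T}(t)\equiv u_T(t)$ for $t\in[0,T+\delta]$ and all $T,M\ge 0$; the four identities follow at once. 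You instead use the full conclusion of Proposition \ref{prop:5.1} (the potential representation), push the densities into the semidiscrete system \eqref{eq:2.9} via the computation behind Proposition \ref{prop:2.2}, and conclude by causal uniqueness of the Galerkin solver. Both work, and neither is circular: the causal solvability of \eqref{eq:2.9} and the convolution-operator description of the Galerkin solver are established (via Laplace domain arguments) before Section \ref{sec:5}. What the paper's route buys is economy: it stays entirely at the PDE/energy level, needs only finite propagation speed plus uniqueness for the cut-off problem, and defers the identification with \eqref{eq:2.9}/\eqref{eq:2.11} to the proof of Theorem \ref{th:A}. What your route buys is that this identification is made explicit already here, so the proof of Theorem \ref{th:A} becomes almost immediate. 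One step in your write-up should be tightened: $(\lambda_T,\phi_T)$ is \emph{not} a global causal solution of \eqref{eq:2.9}, so ``same equations, same data on the window'' does not by itself give coincidence with $(\lambda,\phi)$. The clean argument is that the Galerkin residual of $(E\lambda_T,E\phi_T)$, viewed as an $X_h'\times Y_h'$-valued causal distribution, is supported in $[T+\delta,\infty)$, whence the difference from $(\lambda,\phi)$ is the (causal convolutional) Galerkin error solver applied to a distribution supported in $[T+\delta,\infty)$, and is therefore itself supported there; this also requires noting that $E\lambda_T,E\phi_T$ are Laplace transformable, which follows from the polynomial bounds of Proposition \ref{prop:3.4}.
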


\begin{proof}
Given $(u_T,u^\star_T)$ and $M>0$, we consider the extensions
\[
\underset{^\sim} u (t):=\left\{\begin{array}{ll} u_T(t), &\mbox{in $B_T$},\\ 0, & \mbox{in $B_{T+M}\setminus B_T$}
\end{array}\right.
\qquad
\underset{^\sim} u^\star(t):=\left\{\begin{array}{ll} u_T^\star(t), &\mbox{in $B_T$},\\ 0, & \mbox{in $B_{T+M}\setminus B_T$}.
\end{array}\right.
\]
With the argument given in the proof of Proposition \ref{prop:5.1} we can show that
\[
\underset{^\sim} u ,\underset{^\sim} u ^\star:[0,T+\delta]\to H^1_\Delta(B_{T+M}\setminus\Gamma)
\]
and then the pairs $(u_{T+M},u_{T+M}^\star)$ amd $(\underset{^\sim}u,\underset{^\sim}u^\star)$ solve equations \eqref{eq:3.1}-\eqref{eq:3.2} in the time interval $[0,T+\delta]$. A simple energy argument can then be used to show that they are equal, which implies that
\[
u_{T+M}|_{B_T}(t)\equiv u_T(t), \quad 
u_{T+M}^\star|_{B_T}(t)\equiv u_T^\star(t), \quad \forall t\in [0,T+\delta], \quad \forall T,M\ge 0.
\]
The result is a straightforward consequence of this fact.
\end{proof}

\begin{proof}[Proof of Theorem \ref{th:A}]
Assume first that $\beta_0|_{[0,\infty)}\in \mathcal C^3_0([0,\infty);H^{1/2}(\Gamma))$ and 
$\beta_1|_{[0,\infty)}\in \mathcal C^2_0([0,\infty);H^{-1/2}(\Gamma))$. Because of Proposition \ref{prop:3.4}, we have enough regularity to apply Proposition \ref{prop:5.1} to $u_T$ and $u_T^\star$ (changing the wave velocity in the latter case). We use then  the jumps defined in Proposition \ref{prop:5.2} and define the distributions
\[
u^h:=\mathcal S*E\mu-\mathcal D*E\psi, \qquad v^h:=\mathcal S_m*E\mu^\star-\mathcal D_m*E\psi^\star.
\]
Then the pair $(u^h,v^h)$ is the solution of \eqref{eq:2.11}, as follows from comparing the different transmission conditions in \eqref{eq:2.11} with those of \eqref{eq:3.1}-\eqref{eq:3.2}. Then Proposition \ref{prop:5.1} proves that
\[
\underline u_T(t)=u^h(t), \qquad \underline u^\star_T(t)=v^h(t), \qquad \forall t\in [0,T+\delta]
\]
and by Proposition \ref{prop:3.4}
\[
u^h,v^h\in \mathcal C^2([0,\infty);L^2(\mathbb R^d))
\cap \mathcal C^1([0,\infty);H^1(\mathbb R^d\setminus\Gamma))
\cap\mathcal C([0,\infty);H^1_\Delta(\mathbb R^d\setminus\Gamma))
\]
and
\begin{eqnarray*}
|u^h(t)|_{\Delta,\mathbb R^d\setminus\Gamma}+ |v^h(t)|_{\Delta,\mathbb R^d\setminus\Gamma} 
	& \le & C\big( H_3(\beta_0,t; H^{1/2}(\Gamma))+ H_2(\beta_1,t;H^{-1/2}(\Gamma))\big)\\
\| \dot u^h(t)\|_{1,\mathbb R^d\setminus\Gamma}+\|\dot v^h(t)\|_{1,\mathbb R^d\setminus\Gamma} 
	& \le & C\big( H_3(\beta_0,t; H^{1/2}(\Gamma))+ H_2(\beta_1,t;H^{-1/2}(\Gamma))\big).
\end{eqnarray*}
The remainder of the proof follows from simple shifting and density arguments. Details are identical to similar proofs  in \cite[Section 7.5]{Say13}.
\end{proof}

\begin{proof}[Proof of Theorem \ref{th:B}]
It is a slight variant of the previous proof.
\end{proof}

\section{Time discretization with Convolution Quadrature}\label{sec:6}

\paragraph{Full discretization.} The final step for discretization of our problem consists of applying one of the possible Convolution Quadrature techniques to the semidiscretized integral system \eqref{eq:2.9} (this includes using CQ in the convolution operators acting on the data functions) and to the potential postprocessing \eqref{eq:2.10}. We will apply a BDF2-based CQ discretization. Other multistep techniques (based on the implicit Euler scheme or on the trapezoidal rule) can be presented and analyzed with similar tools. Finally, multistage CQ is also available, using implicit Runge-Kutta methods as background ODE solvers in the numerical scheme. Multistep CQ originated in \cite{Lubich:1988}, applied to parabolic problems, and was extended in \cite{Lubich:1994} to hyperbolic problems, including a time domain boundary integral equation in acoustics. A modern introduction to computational uses of CQ for wave propagation problems is given in \cite{BaSc:2012}. Algorithmic details and several possible interpretations of CQ can be found in \cite{HaSa:2014}. In this section we will follow the plan developed in \cite[Section 10.3]{SaySUB}, based on \cite[Section 6]{BanLalSaySUB}. We note that while a time domain analysis of multistep CQ discretizations for wave propagation problems is known, at the current stage of research multistage CQ methods require a Laplace domain analysis of the associated Galerkin solver (see \cite{LalSay09}). In order to fix ideas, let us introduce the causal BDF2 approximation of the derivative
\[
\partial_k f:=\frac1k\left(\frac32 f -2 f(\cdot-k)+\frac12 f(\cdot-2k)\right),
\]
where $k$ will be the time-discretization parameter (time-step). Following \cite[Section 6]{BanLalSaySUB}, it is easy to prove that the BDF2-CQ discretization of \eqref{eq:2.9}-\eqref{eq:2.10} is equivalent to the transmission problem
\begin{subequations}\label{eq:6.1}
\begin{alignat}{6}
c^{-2} \partial_k^2 u^h_k=\kappa \Delta u^h_k &\qquad && \mbox{(in $L^2(\mathbb R^d\setminus\Gamma)$),}\\
\partial_k^2 v^h_k=\Delta v^h_k &\qquad && \mbox{(in $L^2(\mathbb R^d\setminus\Gamma)$),}\\
\jump{\gamma u^h_k}+\jump{\gamma v^h_k}=\beta_0 & & &\mbox{(in $H^{1/2}(\Gamma)$)},\\
\kappa \jump{\partial_\nu u^h_k}+\jump{\partial_\nu v^h_k}=\beta_1 & & &\mbox{(in $H^{-1/2}(\Gamma)$)},\\
\jump{\gamma u^h_k}\in Y_h & & & \mbox{(in $H^{1/2}(\Gamma)$)},\\
\jump{\partial_\nu u^h_k} \in X_h& & & \mbox{(in $H^{-1/2}(\Gamma)$)},\\
\gamma^+ u^h_k-\gamma^- v^h_k \in X_h^\circ & & & \mbox{(in $H^{1/2}(\Gamma)$)},\\
\kappa\partial_\nu^+ u^h_k-\partial_\nu^- v^h_k \in Y_h^\circ & & & \mbox{(in $H^{-1/2}(\Gamma)$)},
\end{alignat}
followed by the computation of
\begin{equation}
\lambda^h_k:=\jump{\partial_\nu u^h_k}, \qquad \phi^h_k:=\jump{\gamma u^h_k}.
\end{equation}
\end{subequations}
Even if the analysis is done through comparison of \eqref{eq:6.1} and \eqref{eq:2.11}-\eqref{eq:2.12}, let us emphasize two facts: (a) in reality what is solved is the integral system \eqref{eq:2.9}, which is followed by the potential postprocessing \eqref{eq:2.10}; (b) the solution is only computed at equally spaced time-steps of length $k$, and thus we are only computing  the values of the continuous causal functions $(\lambda^h_k,\phi^h_k, u^h_k,v^h_k)$ at  $t_n=k\,n$ for $n\ge 0$. 

\paragraph{Error equations.} Consider the errors for \eqref{eq:6.1} as a discretization of \eqref{eq:2.11}:
\[
e_u:=u^h-u^h_k, \quad e_v:=v^h-v^h_k, 
\]
The distributions $(e_u,e_v)$ are a causal solution to the error equations
\begin{subequations}\label{eq:6.2}
\begin{alignat}{6}
c^{-2} \partial_k^2 e_u=\kappa \Delta e_u+c^{-2}(\partial_k^2 u^h-\ddot u^h)&\qquad 
	&& \mbox{(in $L^2(\mathbb R^d\setminus\Gamma)$),}\\
\partial_k^2 e_v=\Delta e_v+(\partial_k^2 v^h-v^h) &\qquad 
	&& \mbox{(in $L^2(\mathbb R^d\setminus\Gamma)$),}\\
\jump{\gamma e_u}=-\jump{\gamma e_v}\in Y_h & & &\mbox{(in $H^{1/2}(\Gamma)$)},\\
\kappa \jump{\partial_\nu e_u}=-\jump{\partial_\nu e_v}\in X_h & & &\mbox{(in $H^{-1/2}(\Gamma)$)},\\
\gamma^+ e_u-\gamma^- e_v \in X_h^\circ & & & \mbox{(in $H^{1/2}(\Gamma)$)},\\
\kappa\partial_\nu^+ e_u-\partial_\nu^- e_v \in Y_h^\circ & & & \mbox{(in $H^{-1/2}(\Gamma)$)}.
\end{alignat}
\end{subequations}

\begin{proposition}\label{prop:6.1}
The errors $e_u$ and $e_u$ can be bounded for all $t\ge 0$ as follows
\begin{alignat*}{6}
\| \kappa^{1/2} \nabla e_u(t)\|_{\mathbb R^d\setminus\Gamma}
+\|\nabla e_v(t)\|_{\mathbb R^d}
+\|c^{-1}\partial_k e_u(t)\|_{\mathbb R^d}
+\|\partial_k e_v(t)\|_{\mathbb R^d}\qquad &\\
\le C k^2 t^2 \big(
	\max_{0\le \tau\le t} \|(\mathcal P_2 u^h)^{(4)}(\tau)\|_{\mathbb R^d}
	+\max_{0\le \tau\le t} \|(\mathcal P_2 v^h)^{(4)}(\tau)\|_{\mathbb R^d}
	\big),&
\end{alignat*}
where $\mathcal P_2 f:=f+2\dot f+\ddot f$.
\end{proposition}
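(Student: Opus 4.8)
The plan is to recognize the error equations \eqref{eq:6.2} as a discrete-in-time evolution problem of exactly the type handled by the free-space/cut-off machinery of Sections \ref{sec:4}--\ref{sec:5}, but now with the continuous second derivative replaced by the BDF2 difference operator $\partial_k^2$. First I would reduce to a cut-off problem on $B_T\setminus\Gamma$ and set up the same Hilbert-space triple $D(A)\subset V\subset H$ from Proposition \ref{prop:3.1}, so that \eqref{eq:6.2} reads $\partial_k^2 \underline e=A\underline e+\underline g$, where the consistency defect $\underline g$ has components $c^{-2}(\partial_k^2 u^h-\ddot u^h)$ and $\partial_k^2 v^h-v^h$. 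The homogeneous transmission conditions in \eqref{eq:6.2} are precisely those encoded in $V$ and $D(A)$, so no lifting is needed here — this is the simplification over Proposition \ref{prop:3.4}. The whole estimate will then come from a single \emph{discrete energy identity} for the scheme $\partial_k^2\underline e=A\underline e+\underline g$ tested against the natural discrete velocity, combined with the consistency bound for the BDF2 truncation error.

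The key steps, in order, are as follows. I would first establish the consistency estimate: BDF2 is second order, so $\|\partial_k^2 f - \ddot f\|$ is controlled by $k^2$ times an integral (or max) of the fourth derivative of $f$, and the factor $\mathcal P_2 f=f+2\dot f+\ddot f$ appears because BDF2-CQ is analyzed by multiplying the transfer function by its characteristic $(1-\zeta)^2$-type factor, equivalently because the generating function of the scheme pairs the error with the operator $(1+\partial_k^{-1})^2$ whose continuous shadow is $\mathcal P_2$. Thus one shows $\max_{[0,t]}\|\underline g\|_H \lesssim k^2 (\max_{[0,t]}\|(\mathcal P_2 u^h)^{(4)}\|+\max_{[0,t]}\|(\mathcal P_2 v^h)^{(4)}\|)$. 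Second, I would run the discrete analogue of the energy argument behind Proposition \ref{prop:3.2}: testing the difference equation with the discrete velocity and summing by parts in the time index yields control of the discrete kinetic energy $\|c^{-1}\partial_k e_u\|^2+\|\partial_k e_v\|^2$ and the potential energy $\|\kappa^{1/2}\nabla e_u\|^2+\|\nabla e_v\|^2$ by a Gr\"onwall-type accumulation of $\|\underline g\|_H$. The two powers of $t$ on the right-hand side arise exactly as in the continuous case \eqref{eq:3.56a}, where each time-integration of a bound that is itself an integral in $\tau$ produces an extra factor of $t$; here the discrete sums over $t_n=kn$ reproduce the same two factors. Finally I would transfer the cut-off estimate back to free space using Proposition \ref{prop:5.1} (finite speed of propagation holds verbatim for the CQ scheme, since BDF2-CQ inherits the support/causality property through \eqref{eq:4.1}), and pass from the discrete time grid to all $t\ge 0$ by the same shifting and density arguments invoked at the end of the proof of Theorem \ref{th:A}.

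The main obstacle I expect is the discrete energy argument itself: unlike the continuous Green identity of Proposition \ref{prop:3.1}(c), the BDF2 operator $\partial_k^2$ does not admit a clean positive summation-by-parts formula, and one must use the $G$-stability (Dahlquist) structure of BDF2 — i.e. there is a positive-definite quadratic form in two consecutive differences that decreases along the scheme — to extract a nonnegative discrete energy. Getting the constants to be genuinely independent of $k$ and of $h$, while keeping the clean $k^2 t^2$ dependence, is the delicate point. Fortunately this is precisely the computation carried out in \cite[Section 6]{BanLalSaySUB}, so in practice I would quote their discrete energy lemma and verify that the transmission structure of \eqref{eq:6.2}, governed by the same operator $A$, satisfies its hypotheses; the novelty is only that $A$ now encodes the two-speed, six-condition coupling, but all that enters the energy argument is the Green identity Proposition \ref{prop:3.1}(c) and the norm equivalence \eqref{eq:3.33}, both already available.
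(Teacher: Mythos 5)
Your strategy is genuinely different from the paper's, and it contains a step that fails. The paper never runs a discrete energy argument and never leaves free space: it takes Laplace transforms of the error system \eqref{eq:6.2}, derives an energy identity for the transforms $\mathrm E_u,\mathrm E_v$ by the same integration by parts that underlies Proposition \ref{prop:3.1}(c), uses the elementary symbol bounds $|s_k|\le C|s|$, $|s_k^2-s^2|\le Ck^2|s|^4$, $\mathrm{Re}\,s_k\ge C\min\{1,\mathrm{Re}\,s\}$, and then converts the resulting Laplace-domain estimate into the time-domain bound by invoking \cite[Theorem 7.1]{DomSay13}. In particular, the factor $t^2$ and the operator $\mathcal P_2$ are produced by that transfer theorem, not by any consistency-plus-Gr\"onwall bookkeeping of the kind you describe.

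The fatal step in your plan is the return from the cut-off domain to free space: the claim that ``finite speed of propagation holds verbatim for the CQ scheme'' is false. BDF2-CQ is an implicit discretization; each time step solves a modified Helmholtz problem whose solution operator is not compactly supported in space, so $u^h_k(t_n)$ is nonzero (exponentially small, but nonzero) arbitrarily far from $\Gamma$, and no analogue of Proposition \ref{prop:5.1} is available. Concretely, the support argument behind Proposition \ref{prop:5.1} requires the factor $e^{-M\,\mathrm{Re}\,s}$ in \eqref{eq:4.1} with $M=T+\delta$; for the discrete kernels the relevant frequency is $s_k$, and $\mathrm{Re}\,s_k$ remains bounded as $\mathrm{Re}\,s\to\infty$ (it tends to $\tfrac3{2k}$ with the standard normalization), so the corresponding factor $e^{-M\,\mathrm{Re}\,s_k}$ never decays and \cite[Proposition 3.6.1]{SaySUB} yields no support information whatsoever. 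This loss of exact causality-in-space is precisely why Section \ref{sec:6} of the paper abandons the cut-off machinery of Sections \ref{sec:4}--\ref{sec:5} and argues in the Laplace domain. Two further problems: the ``discrete energy lemma'' you plan to quote from \cite[Section 6]{BanLalSaySUB} is not what you will find there --- the analysis in that reference, which the present proof mirrors, also proceeds by transforming the error system --- and even a correctly executed G-stability argument (which could in fact be run directly in $\mathbb R^d\setminus\Gamma$, with no cut-off and no Poincar\'e inequality, since only the Green identity and the homogeneous transmission conditions enter) would naturally deliver a bound of the form $Ck^2t\bigl(\max_{0\le\tau\le t}\|(u^h)^{(4)}(\tau)\|+\max_{0\le\tau\le t}\|(v^h)^{(4)}(\tau)\|\bigr)$, which does not imply the stated estimate for small $t$; reconciling the two forms would require additional work that the proposal does not contain. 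As written, the proposal is not a proof of the proposition.
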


\begin{proof}
Let $\mathrm E_u, \mathrm E_v, \mathrm U^h, \mathrm V^h$ be the respective Laplace transforms of $e_u, e_v, u^h,$ and $v^h$. Taking the Laplace transform of \eqref{eq:6.2}, and using an integration by parts argument (with the same format as in the proof of Proposition \ref{prop:3.1}(c)), it is easy to prove that for all $s\in \mathbb C$ with $\mathrm{Re}\,s>0$, it holds
\begin{alignat*}{6}
\mathrm{Re}\,s_k \Big(
	|s_k|^2 \| c^{-1}\mathrm E_u(s)\|_{\mathbb R^d}^2
	+\kappa \| \nabla \mathrm E_u(s)\|_{\mathbb R^d\setminus\Gamma}^2
	+|s_k|^2 \|\mathrm E_v(s)\|_{\mathbb R^d}^2
	+\|\nabla \mathrm E_v(s)\|_{\mathbb R^d\setminus\Gamma}^2\Big) \qquad & \\
= \mathrm{Re}\, 
		\Big(\overline{s_k} \Big( ( c^{-2}(s_k^2-s^2) \mathrm U^h(s),\overline{\mathrm E_u(s)})_{\mathbb R^d}+
		                    ( (s_k^2-s^2)\mathrm V^h(s),\overline{\mathrm E_v(s)})_{\mathbb R^d}\Big)\Big),&
\end{alignat*}
where $s_k=\frac32-2e^{-sk}+\frac12 e^{-2sk}$. Using that
\[
|s_k|\le C|s| \qquad |s_k^2-s^2|\le Ck^2|s|^4,\qquad
\mathrm{Re}\,s_k \ge C\min\{ 1,\mathrm{Re}\,s\} \qquad \forall k\le 1, \quad \mathrm{Re}\,s > 0,
\]
the bound
\begin{alignat*}{6}
	\| s_kc^{-1} \mathrm E_u(s)\|_{\mathbb R^d}
	+ \| \kappa^{1/2}\nabla \mathrm E_u(s)\|_{\mathbb R^d\setminus\Gamma}
	+\| s_k \mathrm E_v(s)\|_{\mathbb R^d}
	+\|\nabla \mathrm E_v(s)\|_{\mathbb R^d\setminus\Gamma}\qquad &\\
\le \frac{C k^2}{\min\{1,\mathrm{Re}\,s\}} \Big(
	\| s^4 \mathrm U^h(s)\|_{\mathbb R^d}+
	\| s^4 \mathrm V^h(s)\|_{\mathbb R^d}\Big)
\end{alignat*}
follows. The result in the statement is then a direct consequence of this inequality and \cite[Theorem 7.1]{DomSay13}.
\end{proof}

\begin{theorem}
The difference between the semidiscrete and the fully discrete solutions of \eqref{eq:2.7}-\eqref{eq:2.8} can be bounded for all $t\ge 0$
\begin{alignat*}{6}
t\| \nabla u^h(t)-\nabla u^h_k(t)\|_{\mathbb R^d\setminus\Gamma}+
t\| \nabla v^h(t)-\nabla v^h_k(t)\|_{\mathbb R^d\setminus\Gamma} 
+\|\phi^h(t)-\phi^h_k(t)\|_{1/2,\Gamma}\qquad & \\
\le C k^2 t^3 \Big( H_8(\beta_0,t\,;\, H^{1/2}(\Gamma))+ H_7(\beta_1,t\,;\, H^{-1/2}(\Gamma))\Big), & \\
\| \lambda^h(t)-\lambda^h_k(t) \|_{-1/2,\Gamma} \le 
C k^2 t^2 \Big( H_9(\beta_0,t\,;\, H^{1/2}(\Gamma))+ H_8(\beta_1,t\,;\, H^{-1/2}(\Gamma))\Big). &
\end{alignat*}
\end{theorem}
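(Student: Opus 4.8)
The backbone of the argument is the Laplace-domain energy estimate of Proposition~\ref{prop:6.1}, which already controls $\nabla e_u$, $\nabla e_v$ and the discrete velocities $\partial_k e_u$, $\partial_k e_v$ in $L^2$ by $k^2t^2$ times the $L^2$ norm of the consistency quantities $(\mathcal P_2 u^h)^{(4)}$ and $(\mathcal P_2 v^h)^{(4)}$. The plan is therefore to bound these consistency quantities in terms of the data and then to post-process the output of Proposition~\ref{prop:6.1} into the three norms appearing in the statement. For the first task I would use that the semidiscrete Galerkin solver is a convolution operator, so that time differentiation commutes with it: for every $m$ the derivative $(u^h)^{(m)}$ is the semidiscrete solution associated with the differentiated data $(\beta_0^{(m)},\beta_1^{(m)})$. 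Since $(\mathcal P_2 u^h)^{(4)}=(u^h)^{(4)}+2(u^h)^{(5)}+(u^h)^{(6)}$ involves time derivatives up to order six, I would apply the first ($H^1$) bound of Theorem~\ref{th:A} to $(u^h)^{(6)}$ and the lower orders, using $\|\punto\|_{L^2}\le\|\punto\|_{1,\mathbb R^d\setminus\Gamma}$. Because $\partial^{-1}\beta_0^{(m)}=\beta_0^{(m-1)}$ for causal data with vanishing initial values, the right-hand side collapses to $H_3(\beta_0^{(5)},t)+H_2(\beta_1^{(5)},t)$, whose top derivative orders are $8$ and $7$; these are dominated by $H_8(\beta_0,t)+H_7(\beta_1,t)$. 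The same reasoning applies to $v^h$, and feeding it into Proposition~\ref{prop:6.1} and multiplying by $t$ yields the announced $k^2t^3$ gradient estimate.

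For the trace term $\phi^h-\phi^h_k=\jump{\gamma e_u}$ I would use the trace inequality $\|\jump{\gamma e_u}\|_{1/2,\Gamma}\le C\|e_u\|_{1,\mathbb R^d\setminus\Gamma}$, which forces me to control the full $H^1$ norm and hence the missing $L^2$ part of $e_u$. Proposition~\ref{prop:6.1} only bounds the discrete velocity $\partial_k e_u$, so I would recover $e_u$ from $\partial_k e_u$ through the operational calculus of the BDF2 symbol, inverting $\partial_k$, which acts like time integration. Since $e_u$ is causal this reconstruction costs one factor $\int_0^t$, turning the $k^2t^2$ bound on $\|\partial_k e_u(t)\|_{L^2}$ into a $k^2t^3$ bound on $\|e_u(t)\|_{L^2}$, in exact agreement with the extra power of $t$ carried by the trace term relative to the gradient term.

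For $\lambda^h-\lambda^h_k=\jump{\partial_\nu e_u}$ I would use the continuity of the normal-derivative jump from $H^1_\Delta(\mathbb R^d\setminus\Gamma)$ into $H^{-1/2}(\Gamma)$, giving $\|\jump{\partial_\nu e_u}\|_{-1/2,\Gamma}\le C(\|\nabla e_u\|+\|\Delta e_u\|)$. The Laplacian is eliminated through the error PDE in \eqref{eq:6.2}, namely $\kappa\Delta e_u=c^{-2}\partial_k^2 e_u-c^{-2}(\partial_k^2u^h-\ddot u^h)$, so I need an $L^2$ bound on $\partial_k^2 e_u$ and on the extra consistency term. Bounding $\partial_k^2 e_u$ amounts to applying Proposition~\ref{prop:6.1} to the once time-differentiated error system, equivalently estimating $\partial_k(\partial_k e_u)$ via $(\mathcal P_2 u^h)^{(5)}$, which raises every derivative count by one and explains the appearance of $H_9(\beta_0,t)+H_8(\beta_1,t)$; the scaling stays at $k^2t^2$ since no further time integration is needed here.

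The main obstacle I anticipate is the careful handling of the mismatch between the discrete derivative $\partial_k$ and the exact derivative. Both the consistency term $\partial_k^2u^h-\ddot u^h$, whose Laplace symbol is governed by $|s_k^2-s^2|\le Ck^2|s|^4$, and the reconstruction of $e_u$ from $\partial_k e_u$ must be converted into clean time-domain seminorm bounds. This is precisely where the transfer result \cite[Theorem 7.1]{DomSay13} invoked in Proposition~\ref{prop:6.1} has to be applied repeatedly, and where keeping the powers of $t$ and the derivative orders in perfect correspondence with the statement is the delicate bookkeeping that underlies the whole estimate.
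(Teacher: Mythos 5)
Your proposal follows essentially the same route as the paper's proof: gradients from Proposition \ref{prop:6.1} combined with Theorem \ref{th:A} (applied to time-differentiated data, since the Galerkin solver is a convolution operator), the trace term by inverting $\partial_k$ at the cost of one factor of $t$ (the paper makes this precise through the discrete-integration inequality $\|f(t)\|_X\le k\sum_{j\ge 0}\|\partial_k f(t-t_j)\|_X$), and the normal-derivative term through the error PDE, which shifts the Laplacian onto $\partial_k^2 e$ plus a consistency term and raises every derivative count by one, producing $H_9(\beta_0,t)$ and $H_8(\beta_1,t)$. The only cosmetic deviation is that you carry out the trace and flux estimates on $e_u$ while the paper works with $e_v$ (equivalent via the jump identities in \eqref{eq:6.2}); otherwise your bookkeeping fills in exactly the details the paper delegates to \cite[Theorem 6.7]{BanLalSaySUB}.
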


\begin{proof}
The bounds for the gradients follows from Proposition \ref{prop:6.1} and Theorem \ref{th:A}. In order to bound the traces  we use the fact that for a causal $X$-valued function $f$
\[
\| f(t)\|_X \le k \sum_{j=0}^\infty \| \partial_k f(t-t_j)\|_X\le t\max_{0\le \tau\le t} \| f(\tau)\|_X
\]
and then use this to obtain $L^2(\mathbb R^d)$ estimates of $e_v(t)$ from those of $\partial_k e_v(t)$ given in Proposition \ref{prop:6.1}. Finally, in order to bound normal derivatives of $e_v$, we use the error equation $\Delta e_v=\partial_k^2 e_v+(\ddot v^h-\partial_k^2 v^h)$ to bound $\Delta e_v(t)$. The details are very similar to those of the proof of \cite[Theorem 6.7]{BanLalSaySUB} (see also \cite[Section 10.3]{SaySUB}) and are therefore omitted.
\end{proof}

\section{Numerical experiments}\label{sec:7}

\paragraph{A smooth obstacle.} In this experiment we choose $\Gamma\subset \mathbb R^2$ to be the smooth closed curve parametrized by the function
\[
z\mapsto \left( (1+(\cos z)^2)\cos z, \, (1+(\sin z)^2)\sin z\right) 
\left(\begin{array}{cc} 1/\sqrt2 & 1/\sqrt2 \\ - 1/\sqrt2 & 1/\sqrt2
\end{array}\right),
\]
which is shaped like a smoothened square parallel to the coordinate axes. We choose $\kappa=0.8$ and $c^2\kappa=1.2^2$. The data in \eqref{eq:2.1} are taken so that the exterior solution is zero and the interior solution is a plane wave $u(\mathbf x,t)=\sin (c\sqrt\kappa (t-t_0)-\mathbf x\cdot\mathbf d) \, h(c\sqrt\kappa (t-t_0)-\mathbf x\cdot\mathbf d)$, where $t_0=2.2$, $\mathbf d=(1/\sqrt2,-1/\sqrt2)$ and $h$ is a smoothened version of the Heaviside function, namely a polynomial of degree ten that connects the points $(0,0)$ and $(1,1)$. We will integrate in the time interval $[0,4]$ using the BDF2-based CQ scheme.

For spatial discretization we use a division of $\Gamma$ into $N$ elements, by choosing a uniform grid in parametric space. The space $X_h$ is composed of piecewise constant functions in this mesh. For $Y_h$ we choose continuous piecewise linear functions in the parameter $z$, mapped to $\Gamma$ and defined on a uniform grid with $N$ elements which is staggered with respect to the grid that is used to define $X_h$. With adequately chosen reduced integration, it is possible to rewrite all the elements of the matrices in the language of the fully discrete Calder\'on calculus of \cite{DoLuSa:2014}. The interior solution will be computed in the center and corners of the square $[-0.5,0.5]^2 \subset \Omega_-$: these five points will be denoted $\mathbf x_\ell^{\mathrm{obs}}$ for $\ell=1,\ldots,5.$ We then measure relative errors associated to the absolute errors
\begin{eqnarray*}
e^\lambda &:=& \| \lambda(T)-\Pi_0\kappa \partial_\nu u(T)\|_{L^\infty(\Gamma)},
\\
e^\varphi &:=&\| \varphi(T)-\Pi_1\gamma u(T)\|_{L^\infty(\Gamma)},
\\
e^u &:=& \max_\ell | u^h(\mathbf x_\ell^{\mathrm{obs}},T)-u(\mathbf x_\ell^{\mathrm{obs}},T)|. 
\end{eqnarray*}
(The corresponding relative errors will be denoted with a capital $E$ in the tables.)
Here $\Pi_0$ is the midpoint interpolation operator on $X_h$ and $\Pi_1$ is the natural Lagrange interpolation on $Y_h$. 
The results are reported in Table \ref{table:1} and Figure \ref{figure:1}.

\begin{table}[h]
\begin{tabular}{r|cc|cc|cc}
\hline
$N=M$  & $E^\varphi$     & e.c.r. & $E^\lambda$   & e.c.r. & $E^u$ & e.c.r. \\ \hline
50   & 3.0159E(-01) &        & 1.0925E(+00) &        & 1.6530E(-01)   &        \\
100  & 2.2310E(-01) & 0.4349 & 7.8330E(-01) & 0.4800 & 4.1091E(-02)   & 2.0082 \\
150  & 1.7356E(-01) & 0.6192 & 6.8195E(-01) & 0.3418 & 1.8436E(-02)   & 1.9767 \\
200  & 1.4065E(-01) & 0.7308 & 6.1521E(-01) & 0.3580 & 1.0385E(-02)   & 1.9952 \\
300  & 9.7445E(-02) & 0.9052 & 5.2374E(-01) & 0.3970 & 4.5692E(-03)   & 2.0248 \\
500  & 5.0972E(-02) & 1.2686 & 3.0757E(-01) & 1.0420 & 1.7325E(-03)   & 1.8984 \\
700  & 2.9493E(-02) & 1.6260 & 1.7386E(-01) & 1.6954 & 8.5386E(-04)   & 2.1029 \\
900  & 1.8488E(-02) & 1.8584 & 1.0393E(-01) & 2.0475 & 4.8647E(-04)   & 2.2386 \\
1200 & 1.0462E(-02) & 1.9791 & 5.9202E(-02) & 1.9560 & 2.6762E(-04)   & 2.0774 \\
1600 & 5.8992E(-03) & 1.9917 & 3.2909E(-02) & 2.0412 & 1.4956E(-04)   & 2.0226 \\ \hline
\end{tabular}
\caption{Relative errors at final time ($T=4$) for trace, normal derivative and interior solution in the case of a smooth obstacle.}\label{table:1}
\end{table}

\begin{figure}[h]
\begin{center}
\includegraphics[width=9cm]{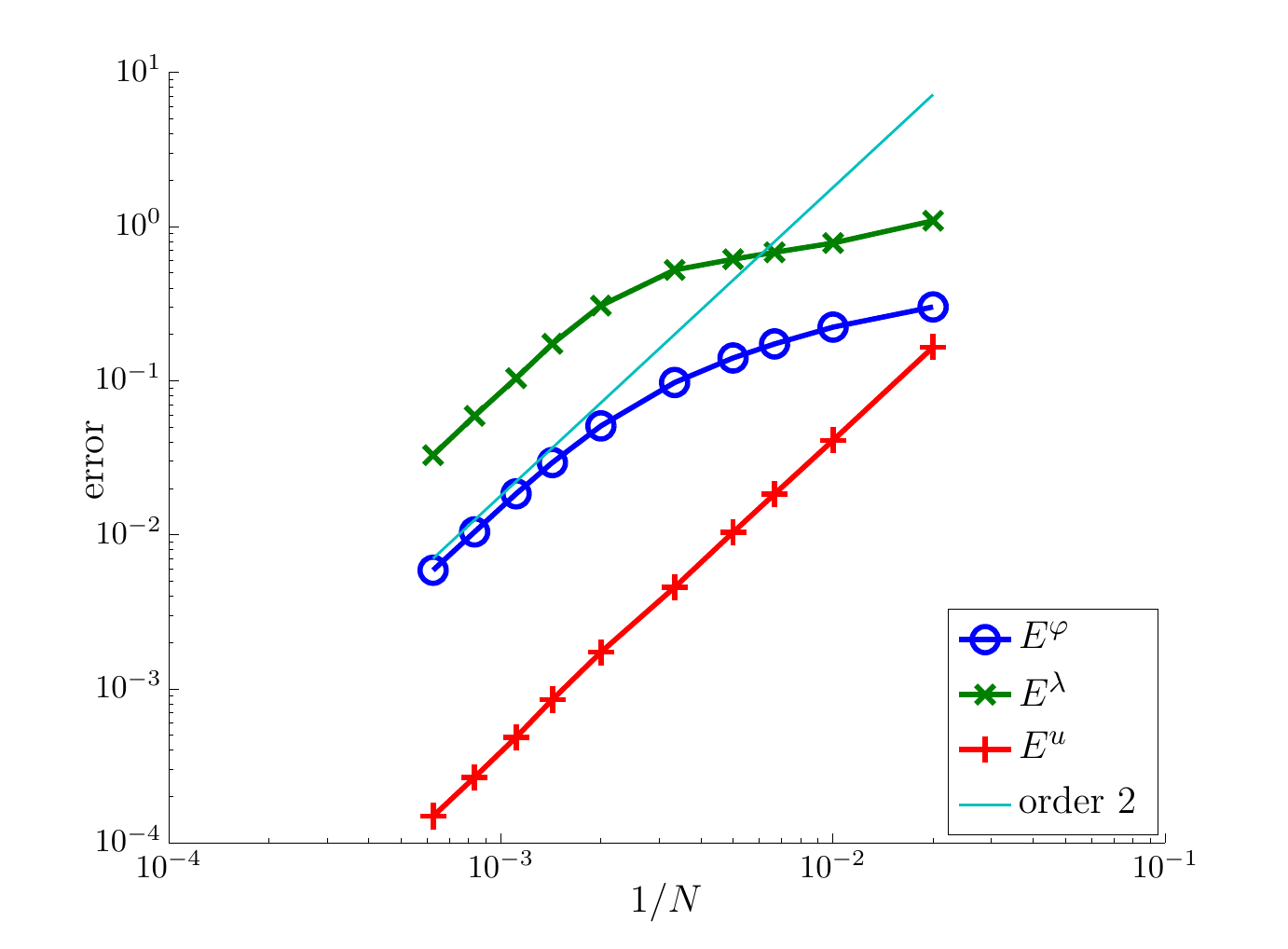}
\end{center}
\caption{Error graphs corresponding to Table \ref{table:1}}\label{figure:1}
\end{figure}

\paragraph{A polygonal obstacle.} We now consider an obstacle whose boundary is the quadrilateral with vertices $(0,0)$, $(1,0)$, $(0.8,0.8)$, and $(0.2,1)$. The same physical data ($\kappa$ and $c$) and the exact solution are the same as in the previous experiment, and we also integrate up to time $T=4$ with the BDF2-CQ scheme. Each of the edges of $\Gamma$ is subdivided into an equal number of elements (the partition is thus piecewise uniform) to a total of $N$ elements. The spaces $X_h$ and $Y_h$ are respectively composed of piecewise constant and continuous piecewise linear functions on this grid. The interior solution will be observed in the points
\[
\mathbf x_1^{\mathrm{obs}}:=(0.3,0.4),
\quad
\mathbf x_2^{\mathrm{obs}}:=(0.5,0.7),
\quad
\mathbf x_3^{\mathrm{obs}}:=(0.65,0.4),
\quad
\mathbf x_4^{\mathrm{obs}}:=(0.5,0.2).
\]
We measure relative errors corresponding to
\begin{eqnarray*}
e^\lambda &:=& \| \lambda(T)-\kappa \partial_\nu u(T)\|_{L^2(\Gamma)},
\\
e^\varphi &:=&\| \varphi(T)-\gamma u(T)\|_{L^2(\Gamma)},
\\
e^u &:=& \max_\ell | u^h(\mathbf x_\ell^{\mathrm{obs}},T)-u(\mathbf x_\ell^{\mathrm{obs}},T)|. 
\end{eqnarray*}

\begin{table}[h]
\begin{tabular}{rr|cc|cc|cc}
\hline
N   & M    & $E^\varphi$     & e.c.r  & $E^\lambda$  & e.c.r  & $E^u$ & e.c.r  \\ \hline
4   & 150  & 4.9350E(-02) &        & 6.0959E(-02) &        & 1.5495E(-02)   &        \\
8  & 300  & 4.8955E(-03) & 3.3335 & 4.1232E(-02) & 0.5641 & 2.8475E(-03)   & 2.4441 \\
16  & 600  & 1.2233E(-03) & 2.0007 & 1.3734E(-02) & 1.5860 & 8.5962E(-04)   & 1.7279 \\
32  & 1200 & 8.6127E(-05) & 3.8281 & 1.7186E(-03) & 2.9984 & 9.4632E(-05)   & 3.1833 \\
64 & 2400 & 1.9057E(-05) & 2.1762 & 4.1594E(-04) & 2.0468 & 2.6609E(-05)   & 1.8304 \\
128 & 4800 & 4.6711E(-06) & 2.0284 & 1.0252E(-04) & 2.0205 & 6.7798E(-06)   & 1.9726 \\ \hline
\end{tabular}
\caption{Relative errors at final time for trace, normal derivative and interior solution in the case of a non-smooth scatterer.}\label{table:2}
\end{table}

\begin{figure}[h]
\begin{center}
\includegraphics[width=9cm]{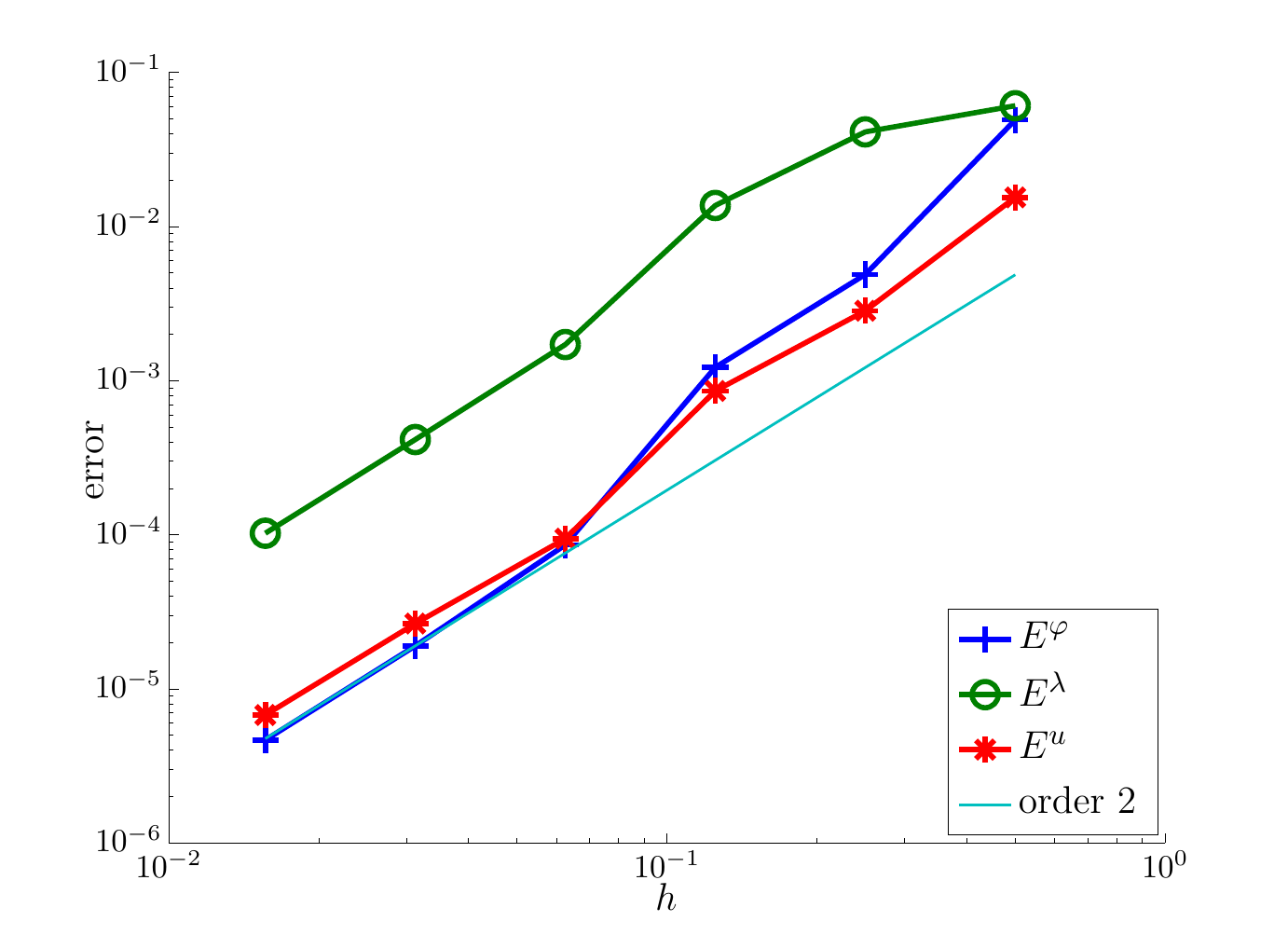}
\end{center}
\caption{Error graphs corresponding to Table \ref{table:2}.}\label{figure:2}
\end{figure}

\paragraph{A multiple scatterer illustration.} We finally show several snapshots of the scattering of a plane wave by four circular obstacles with different material properties. In all four obstacles $\kappa=1$. The wave velocity is set to be $c=2$ in the obstacles placed in NE and SW positions (see Figure \ref{figure:3}) and $c=0.5$ in the other two obstacles.

\begin{figure}[h]
\begin{center}
\begin{tabular}{cc}
\includegraphics[width=7cm]{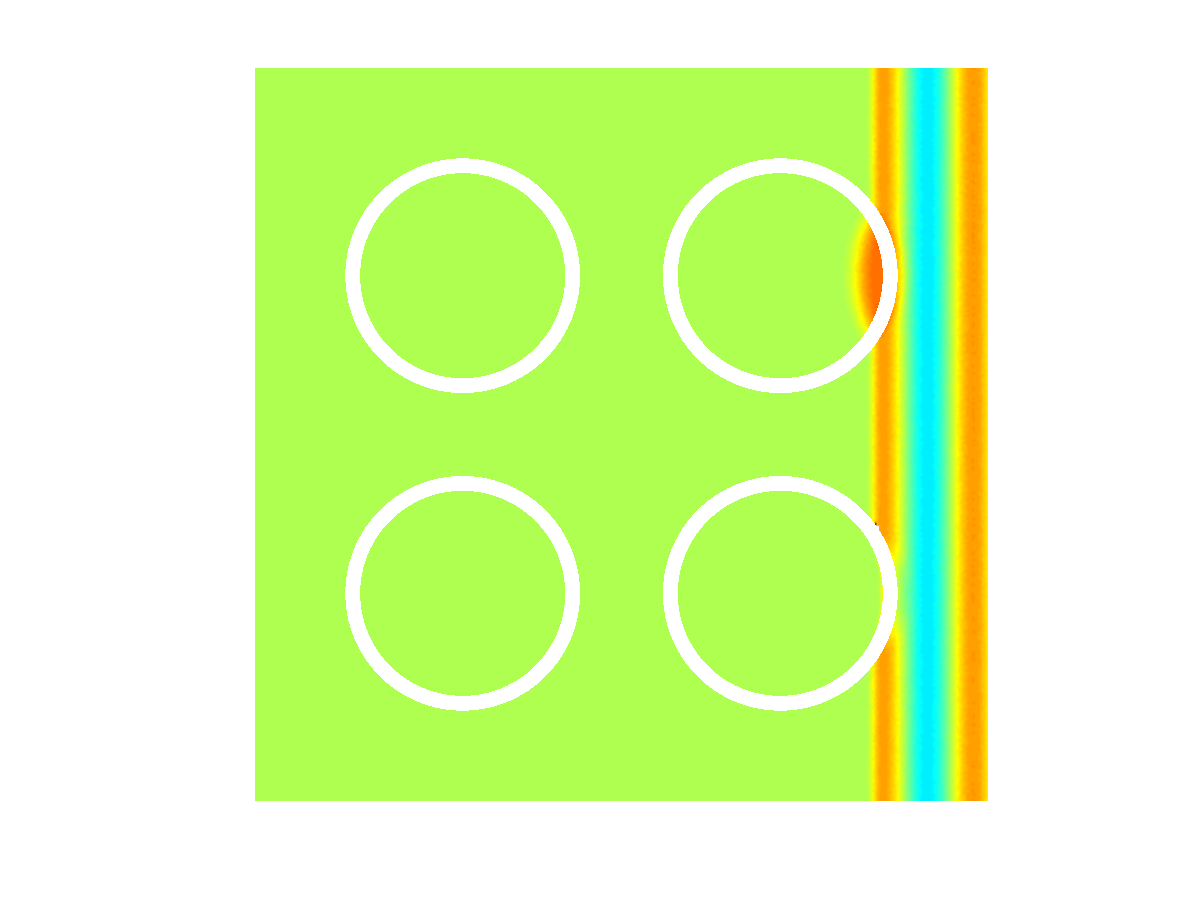} & \includegraphics[width=7cm]{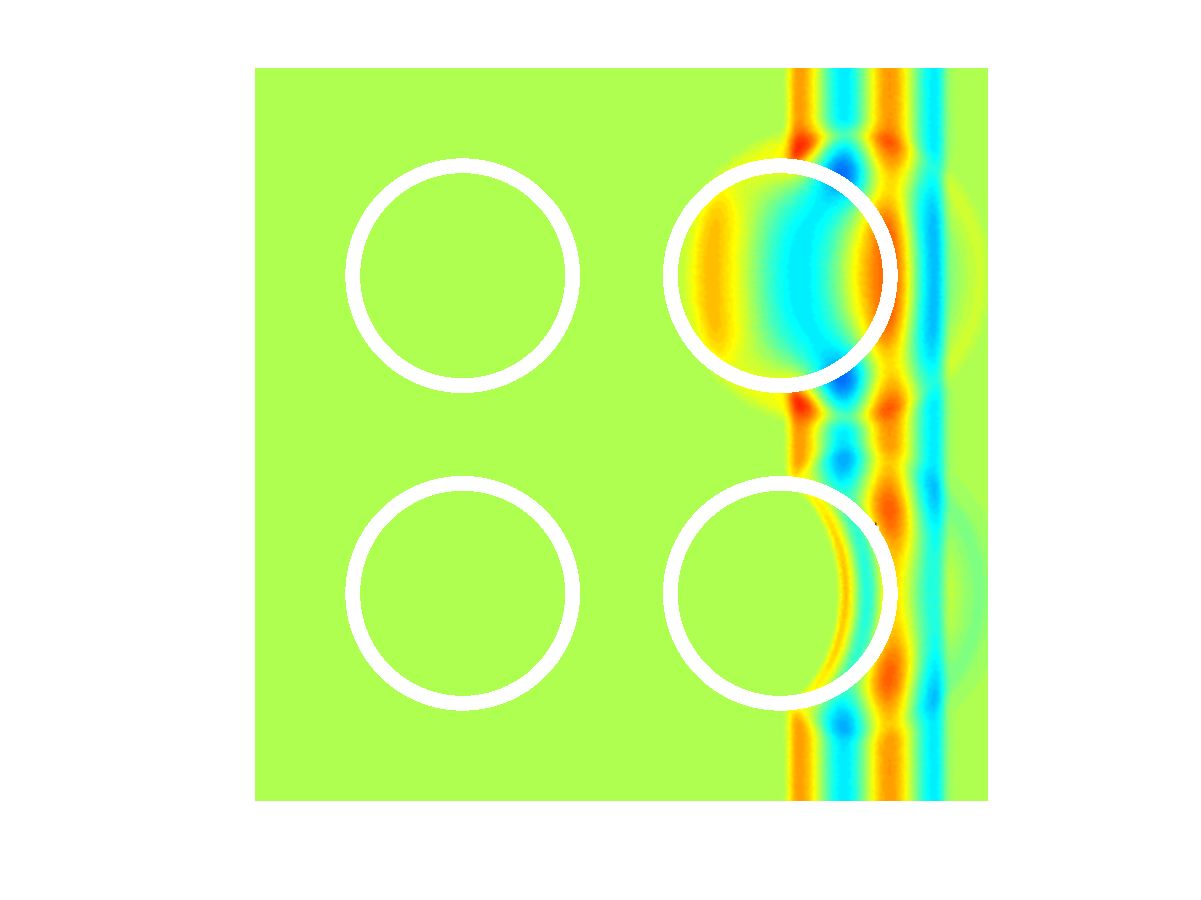} \\
\includegraphics[width=7cm]{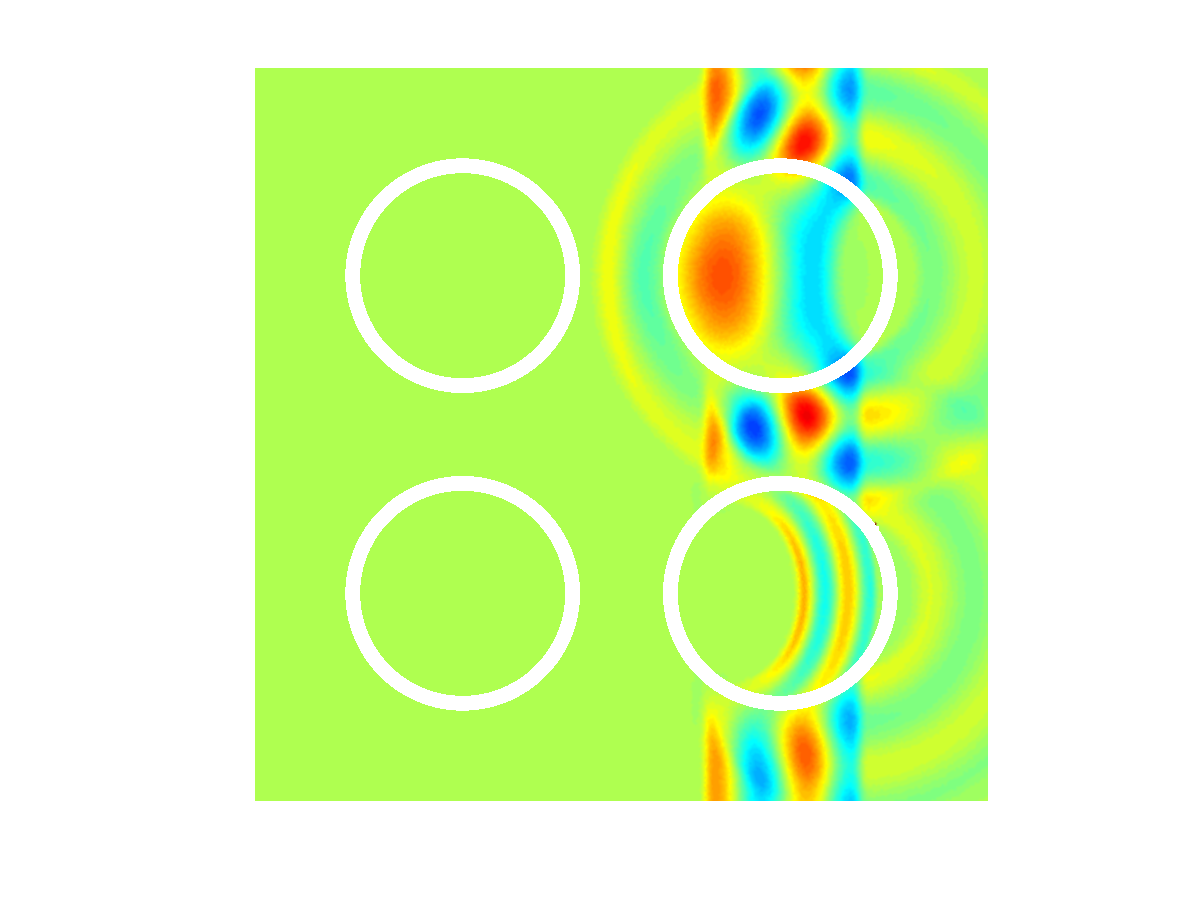} & \includegraphics[width=7cm]{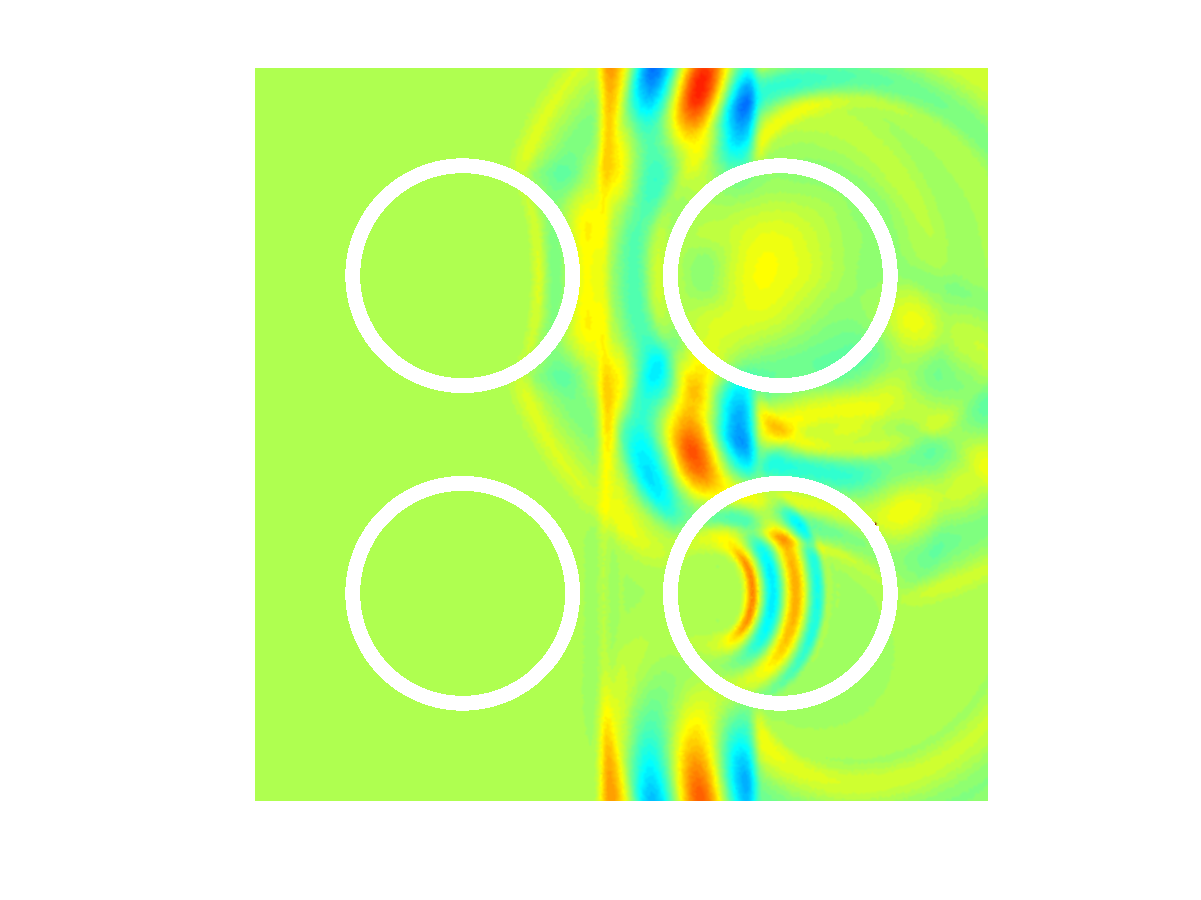} \\
\includegraphics[width=7cm]{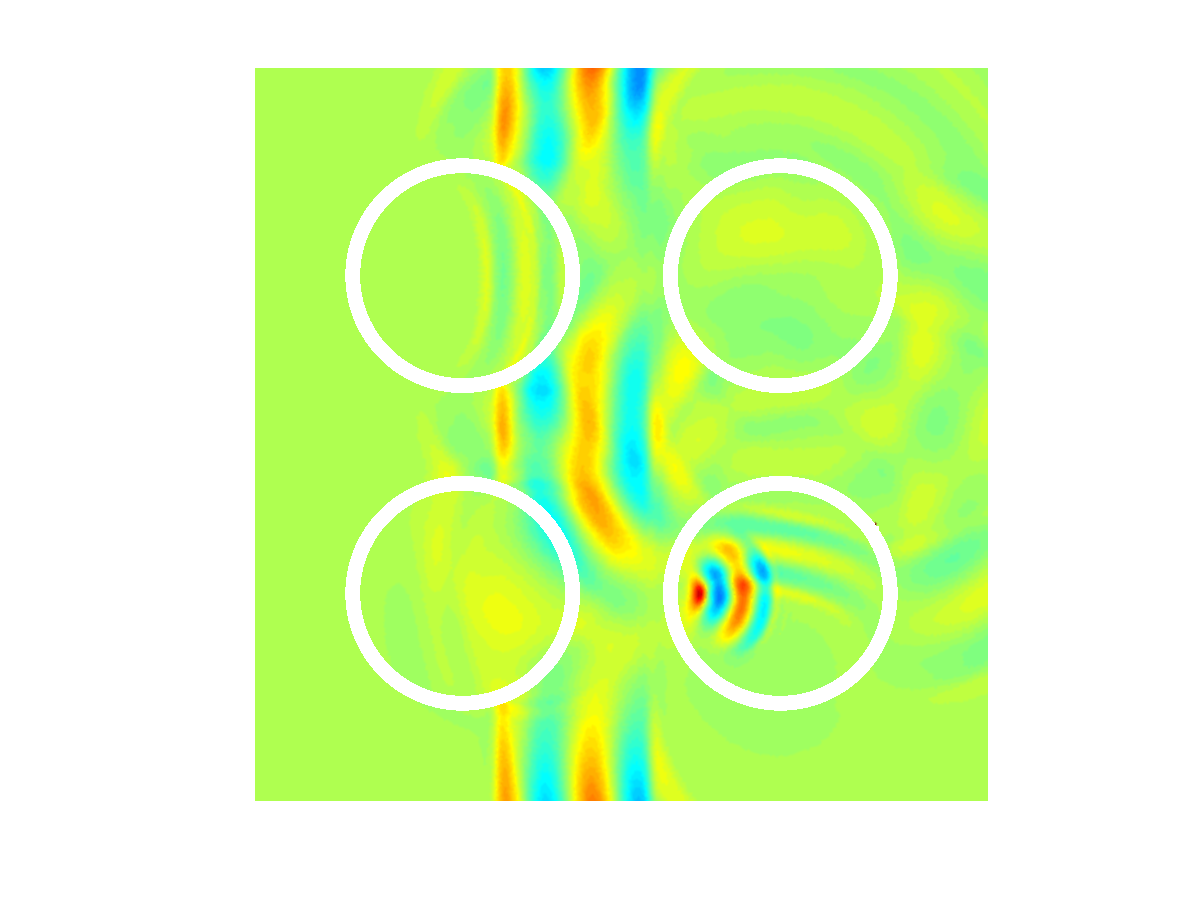} & \includegraphics[width=7cm]{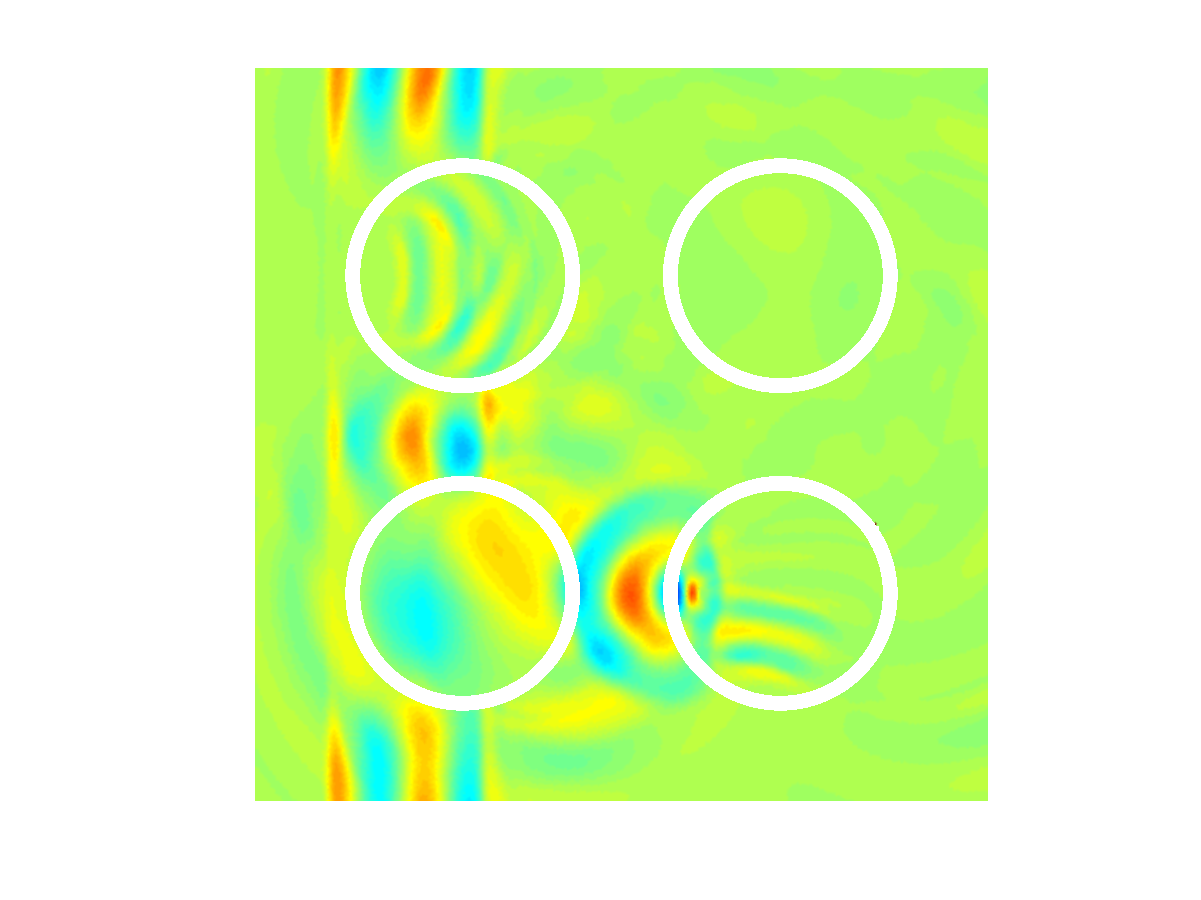} 
\end{tabular}
\end{center}
\caption{Six images of the scattering of a short-pulse plane wave by four penetrable obstacles. The obstacle in the upper right corner propagates waves at twice the speed of the surrounding medium, giving a head start to the part of the wave that traverses the obstacle with respect to the incident wave. }\label{figure:3}
\end{figure}

\bibliographystyle{abbrv}
\bibliography{RefsTDIE}

\end{document}